\newtheoremstyle{mystyle}{}{}{\rmfamily}%
{}{\normalfont\bfseries}{ }{ }{}
\newcommand{\R}{\mathbb{R}}
\newcommand{\N}{\mathbb{N}}
\newcommand{\Z}{\mathbb{Z}}
\newcommand{\Pa}{\mathcal{P}}
\newtheorem{theorem}{Theorem}[section]
\newtheorem{lemma}[theorem]{Lemma}
\newtheorem{prop}[theorem]{Proposition}
\newtheorem{cor}[theorem]{Corollary}
\newtheorem*{teoa}{Theorem A}
\newtheorem*{teoA}{Theorem A}
\newtheorem*{teoB}{Theorem B}
\newtheorem*{teoC}{Theorem C}
\theoremstyle{definition}
\newtheorem{definition}[theorem]{Definition}
\theoremstyle{remark}
\newtheorem{re}[theorem]{Remark}
\numberwithin{equation}{section}
\title{
Existence and Finiteness of equilibrium states for some Partially hyperbolic endomorphisms
}
\author[1]{Alexander Arbieto}
\author[2]{Eric Cabezas}
\affil[1,2]{Instituto de Matemática, Universidade Federal do Rio de Janeiro, RJ, Brazil.}
\affil[1]{\texttt{arbieto@im.ufrj.br}}
\affil[2]{\texttt{eric@im.ufrj.br}}
\date{}
\begin{document}
	\maketitle
	
	\setlength{\headheight}{14.49998pt}

\begin{abstract}
We establish the existence and finiteness of equilibrium states for a class of partially hyperbolic endomorphisms. In our first result, we assume that the central direction is simple. In the second result, we consider the case where there exists a dominated splitting along the central direction, which is decomposed into one-dimensional subbundles. This latter result extends the work of C. Alvarez and M. Cantarino \cite{alvarez2022existence} to higher-dimensional central directions. Finally, we demonstrate the finiteness of measures of maximal entropy under the assumption that the central direction is one-dimensional and the integrated Lyapunov exponent is bounded away from zero.
\end{abstract}

\section{Introduction}

In dynamical systems, there are different ways to quantify the amount of information generated by the considered map. One can approach this measurement from either a statistical or a topological perspective. An important tool that captures the complexity of the system is called \emph{entropy}, which essentially measures the exponential growth rate of expansion along orbits.

A classical result by P. Walters \cite{walters1975variational}, known as the \emph{Variational Principle}, establishes a deep connection between statistical and topological entropy, as well as their weighted counterpart, the pressure:

\begin{equation*}
    P(f, \phi) = \sup\left\{ h_{\mu}(f) + \int \phi \, d\mu : \mu \in \mathcal{M}(f) \right\} = \sup\left\{ h_{\mu}(f) + \int \phi \, d\mu : \mu \in \mathcal{M}_e(f) \right\},
\end{equation*}

where $ h_{\mu}(f) $ denotes the metric entropy of $ f $ with respect to the invariant measure $ \mu $. A measure attaining the supremum is called an \emph{equilibrium state}. In the special case where $\phi \equiv 0$, it is called a \emph{measure of maximal entropy} (abbreviated m.m.e.). 

However, the supremum is not always achieved; see, for instance, \cite{misiurewicz1973diffeomorphism}. To overcome this, R. Bowen \cite{bowen1972entropy} introduced the notion of \emph{entropy-expansiveness}, which ensures the upper semicontinuity of the metric entropy and, consequently, the existence of measures of maximal entropy. Anosov diffeomorphisms are classical examples of entropy-expansive systems. 

The existence of measure of maximal entropy has been established in several contexts, including group actions and systems with weaker forms of hyperbolicity, such as partially hyperbolic dynamics; see for example \cite{cowieson2005srb, Lorenzo2012entropy, climenhaga20, carrasco2021invariance, alvarez2021equilibrium}.

Despite significant progress, much remains to be explored in the context of partially hyperbolic endomorphisms. Recent advances include the classification of partially hyperbolic endomorphisms on surfaces in the presence of periodic center annuli by L. Hall and A. Hammerlindl \cite{hall2022classification}, as well as investigations into the stability of such systems on general closed manifolds by F. Micena and J. Costa \cite{micena2021some}.

In this work, we address the existence of measures of maximal entropy in the setting where the center bundle $E^c$ has dimension greater than one. Our first main result is the following:

\begin{teoA}
    Let $f: M \rightarrow M$ be a partially hyperbolic $C^1$ endomorphism with a simple center bundle. Then, for any continuous potential $\phi: M \to \mathbb{R}$, there exists an equilibrium state for $(f, \phi)$.
\end{teoA}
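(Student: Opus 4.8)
The plan is to reduce the existence of an equilibrium state to the upper semicontinuity of the metric entropy, and to obtain that upper semicontinuity from an entropy-expansiveness property in the sense of Bowen. The set $\mathcal{M}(f)$ of $f$-invariant Borel probability measures is compact and metrizable in the weak-$*$ topology, and since $h_{\mathrm{top}}(f)<\infty$ for a $C^{1}$ map of a compact manifold, $P(f,\phi)$ is finite. If $\mu\mapsto h_{\mu}(f)$ is upper semicontinuous on $\mathcal{M}(f)$, then for any continuous $\phi$ the functional $\mu\mapsto h_{\mu}(f)+\int\phi\,d\mu$ is upper semicontinuous on the compact set $\mathcal{M}(f)$, the integral term being weak-$*$ continuous; hence it attains its supremum, and by the Variational Principle any maximizer is an equilibrium state for $(f,\phi)$. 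So everything comes down to proving that $h_{\bullet}(f)$ is upper semicontinuous.

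To accommodate the non-invertibility of $f$, I would work on the natural extension $\hat f\colon M_{f}\to M_{f}$, with $M_{f}=\{\hat x=(x_{n})_{n\in\Z}:f(x_{n})=x_{n+1}\}$, $\hat f$ the left shift, $\pi(\hat x)=x_{0}$, $\pi\circ\hat f=f\circ\pi$, and $M_{f}$ endowed with a metric such as $\hat d(\hat x,\hat y)=\sum_{n\in\Z}2^{-|n|}d(x_{n},y_{n})$. The push-forward $\pi_{*}$ is a bijection from $\mathcal{M}(\hat f)$ onto $\mathcal{M}(f)$ with $h_{\pi_{*}\hat\mu}(f)=h_{\hat\mu}(\hat f)$ and $\int\phi\,d(\pi_{*}\hat\mu)=\int(\phi\circ\pi)\,d\hat\mu$, so $P(\hat f,\phi\circ\pi)=P(f,\phi)$ and equilibrium states correspond under $\pi_{*}$. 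It thus suffices to prove that $\hat f$ is entropy-expansive: there exists $\varepsilon>0$ such that $h_{\mathrm{top}}\big(\hat f,\Gamma_{\varepsilon}(\hat x)\big)=0$ for every $\hat x\in M_{f}$, where $\Gamma_{\varepsilon}(\hat x)=\{\hat y\in M_{f}:\hat d(\hat f^{\,n}\hat x,\hat f^{\,n}\hat y)\le\varepsilon\text{ for all }n\in\Z\}$; Bowen's theorem then gives the upper semicontinuity of $h_{\bullet}(\hat f)$, hence of $h_{\bullet}(f)$.

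The core step is the estimate $h_{\mathrm{top}}(\hat f,\Gamma_{\varepsilon}(\hat x))=0$. On $M_{f}$ the partially hyperbolic splitting $E^{s}\oplus E^{c}\oplus E^{u}$ is globally defined and $Df$-invariant, with $E^{s}$ uniformly contracted under forward iteration and $E^{u}$ uniformly contracted under backward iteration (partial hyperbolicity forces $Df$ to be invertible, so $f$ is a covering map and $\hat f$ a homeomorphism). A standard cone-field and graph-transform argument then shows that, for $\varepsilon$ small, the $0$-th coordinate $y_{0}$ of any $\hat y\in\Gamma_{\varepsilon}(\hat x)$ lies on a small $C^{1}$ plaque through $x_{0}$ tangent to $E^{c}$: forward $\varepsilon$-shadowing forces the $E^{u}$-component to vanish and backward $\varepsilon$-shadowing forces the $E^{s}$-component to vanish. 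On such center plaques, the defining property of a \emph{simple} center bundle is precisely what is needed to ensure that the induced dynamics carries no topological entropy, whence $h_{\mathrm{top}}(\hat f,\Gamma_{\varepsilon}(\hat x))=0$. This yields the entropy-expansiveness of $\hat f$ and completes the argument.

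I expect the main obstacle to lie in this last step. Because local unstable sets of an endomorphism are defined only along a prehistory, the reduction of $\Gamma_{\varepsilon}(\hat x)$ to a center plaque has to be carried out on $M_{f}$ with some care, and one must check that the metric $\hat d$ is compatible with the hyperbolic estimates (or replace it by a dynamically adapted metric without affecting the zero-entropy conclusion). A related point is to extract from the hypothesis of a simple center bundle a genuine equicontinuity, and hence zero-entropy, statement for the center dynamics inside arbitrarily small dynamical balls, uniformly over base points and prehistories.
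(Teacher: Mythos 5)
Your overall scheme coincides with the paper's: pass to the natural extension, use the bijection between invariant measures and the preservation of entropy to reduce everything to upper semicontinuity of $\widetilde{\mu}\mapsto h_{\widetilde{\mu}}(\widetilde{f})$, and obtain that from a Bowen-type expansiveness statement by showing the infinite dynamical balls $\Gamma_{\epsilon}(\widetilde{x})$ sit inside center-type plaques of zero entropy. The framing of the reduction is fine. But the step you dismiss in one sentence --- ``on such center plaques, the defining property of a simple center bundle is precisely what is needed to ensure that the induced dynamics carries no topological entropy'' --- is exactly the mathematical content of the theorem when $\dim E^c>1$, and you give no argument for it. For a one-dimensional center this is immediate (iterates of an arc have uniformly bounded length), but a higher-dimensional center plaque can a priori generate entropy even if its diameter stays bounded. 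The paper's proof of this point is not a soft equicontinuity remark: it uses the simple structure (the invariant one-dimensional subbundles $E^1,\dots,E^{\ell}$ and the integrability of every partial sum $E^S$, giving a nested family of invariant subfoliations of the center leaf) to verify that the local center manifold is a \emph{geometrical wiring with bounded geometry} --- invariant one-dimensional foliations, uniform continuity, uniformly Lipschitz holonomies and uniformly bi-Lipschitz parametrizations of the leaves --- and then invokes Theorem~4.1 of Carrasco et al.\ \cite{carrasco2021invariance} to conclude that such a set carries zero topological entropy. Your own closing paragraph flags this as the ``main obstacle,'' i.e.\ you have identified where the difficulty lies but not resolved it; as written, the proposal assumes the conclusion at the decisive point.

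Two smaller discrepancies are worth noting. First, the paper does not reduce $\Gamma_{\epsilon}(\widetilde{x})$ to a pure center plaque: it proves only the inclusion $\Gamma_{\epsilon}(\widetilde{x})\subset\widetilde{D}^{cs}_{\alpha}(\widetilde{x})$ (a disc tangent to $E^c\oplus E^s$, foliated by stable leaves through a center plaque), using forward iterates to kill the unstable component, and then disposes of the stable direction by a separate $(n,\epsilon)$-spanning-set argument comparing $\widetilde{D}^{cs}_{\alpha}$ with the set $\Gamma^c$ of points whose forward orbit stays in the center leaf. Your plan to also kill the $E^s$-component by backward $\epsilon$-closeness is plausible for the two-sided set $\Gamma_{\epsilon}$ in the inverse limit, but it needs a local product structure argument carried out inside a single fiber of $p^{-1}(B(x,\beta))$ (the fiber over a point is a Cantor set, so ``plaques through $x_0$'' must be taken in the connected component $\widetilde{B}(\widetilde{x},\beta)$), and you have not supplied it; the paper's route avoids this. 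Second, the paper only establishes (and only needs) \emph{asymptotic} $h$-expansiveness of $\widetilde{f}$, whereas you announce genuine $h$-expansiveness with a uniform $\varepsilon$; that stronger claim would also hinge on the uniform zero-entropy estimate for center plaques that is missing.
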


Informally, the assumption that $ E^c $ is simple means that it decomposes into one-dimensional subbundles, each of which integrates into the fibers of the inverse limit. This integrability of $ E^c $ allows us to employ techniques from \cite{carrasco2021invariance} to compute the entropy along the central curves.

Our second main result concerns systems with dominated splitting:

\begin{teoB}
    \label{teoB_1}
    Let $f: M \rightarrow M$ be a partially hyperbolic endomorphism with a dominated splitting
    \[
    TM = E^s \oplus E^1 \oplus \cdots \oplus E^{\ell} \oplus E^u,
    \]
    where $\dim(E^i) = 1$ for each $1 \leq i \leq \ell$. Then, for any continuous potential $\phi: M \to \mathbb{R}$, there exists an equilibrium state for $(f, \phi)$.
\end{teoB}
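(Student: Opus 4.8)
The plan is to reduce the statement to an upper semicontinuity property of the entropy map, obtained by showing that $f$ is entropy-expansive once the dynamics is lifted to the inverse limit. Since $f$ is non-invertible, the subbundles $E^i$ and $E^u$ are only canonically defined after a choice of backward orbit, so I would first pass to the inverse limit $M_f=\{\hat x=(x_n)_{n\in\Z}: f(x_n)=x_{n+1}\}$ equipped with the usual metric $\hat d$, the shift homeomorphism $\hat f$, and the canonical projection $\pi\colon M_f\to M$, $\pi(\hat x)=x_0$. Standard facts give a bijection $\hat\mu\mapsto\pi_*\hat\mu$ between $\hat f$-invariant and $f$-invariant Borel probability measures preserving entropy, $h_{\hat\mu}(\hat f)=h_{\pi_*\hat\mu}(f)$, and with $\int\phi\circ\pi\,d\hat\mu=\int\phi\,d\pi_*\hat\mu$; in particular $P(f,\phi)=P(\hat f,\phi\circ\pi)$, so it suffices to produce an equilibrium state for $(\hat f,\phi\circ\pi)$. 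On $M_f$ the dominated splitting lifts to an $\hat f$-invariant splitting $\hat E^s\oplus\hat E^1\oplus\cdots\oplus\hat E^\ell\oplus\hat E^u$ along which $\hat f$ is genuinely partially hyperbolic in the invertible sense, yielding local strong stable and strong unstable plaques together with locally invariant center plaques tangent to $\hat E^1\oplus\cdots\oplus\hat E^\ell$ and intermediate plaques tangent to the partial sums $\hat E^1\oplus\cdots\oplus\hat E^i$.

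The core step is to prove that $\hat f$ is entropy-expansive: there is $\varepsilon>0$ such that for every $\hat x\in M_f$ the bi-infinite Bowen ball
\[
\Gamma_\varepsilon(\hat x)=\bigl\{\hat y\in M_f : \hat d(\hat f^{\,n}\hat x,\hat f^{\,n}\hat y)\le\varepsilon \ \text{ for all } n\in\Z\bigr\}
\]
carries zero topological entropy. Uniform contraction on $\hat E^s$ and uniform expansion on $\hat E^u$, together with the local product structure coming from the strong plaques, force $\Gamma_\varepsilon(\hat x)$ to lie inside the local center plaque $W^c_{\mathrm{loc}}(\hat x)$ after shrinking $\varepsilon$. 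It then remains to bound the topological entropy of the action of $\hat f$ on $\bigcup_{n\ge 0}\hat f^{\,n}(\Gamma_\varepsilon(\hat x))$ inside these $\ell$-dimensional center plaques, and here I would argue by induction on $\ell$, using the dominated splitting $\hat E^1\oplus\cdots\oplus\hat E^\ell$ to peel off one one-dimensional subbundle at a time: the quotient dynamics along the leading factor $\hat E^1$ is a family of interval maps whose non-expansive behaviour is monotone, so by a Denjoy–Schwartz / bounded-turning estimate (in the spirit of the one-dimensional arguments of Alvarez–Cantarino) it contributes zero entropy, while the transverse dynamics along $\hat E^2\oplus\cdots\oplus\hat E^\ell$ is again dominated with one fewer factor. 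This mirrors the entropy-expansiveness arguments for partially hyperbolic diffeomorphisms with one-dimensional center subbundles, adapted to the inverse-limit setting.

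Once entropy-expansiveness of $\hat f$ is established, Bowen's criterion (via Misiurewicz's theorem) gives that $\hat\mu\mapsto h_{\hat\mu}(\hat f)$ is upper semicontinuous on the weak-$*$ compact set $\mathcal M(\hat f)$; since $\hat\mu\mapsto\int\phi\circ\pi\,d\hat\mu$ is continuous, the functional $\hat\mu\mapsto h_{\hat\mu}(\hat f)+\int\phi\circ\pi\,d\hat\mu$ is upper semicontinuous and therefore attains its supremum $P(\hat f,\phi\circ\pi)$ at some $\hat\mu_0$; then $\mu_0=\pi_*\hat\mu_0$ is an equilibrium state for $(f,\phi)$. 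I expect the main obstacle to be the zero-entropy estimate inside the center plaques: controlling the interplay between the $\ell$ one-dimensional center directions — which need not be integrable as individual subbundles, only the partial sums being locally invariant plaque-wise — and making the inductive peeling rigorous and uniform in $\hat x$. The hyperbolic reduction to the center plaque and the measure correspondence under $\pi$ are comparatively routine.
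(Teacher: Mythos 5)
Your reduction to the inverse limit, the measure/entropy correspondence under $\pi$, and the final step (tail/entropy expansiveness $\Rightarrow$ upper semicontinuity of $\hat\mu\mapsto h_{\hat\mu}(\hat f)$ $\Rightarrow$ existence of an equilibrium state that projects to one for $(f,\phi)$) all match the paper. The gap is exactly where you flag it: the zero-entropy estimate for $\Gamma_\varepsilon(\hat x)$. Your plan is to confine $\Gamma_\varepsilon(\hat x)$ to an $\ell$-dimensional center plaque and then induct on $\ell$, ``peeling off'' one-dimensional factors via a quotient along the leading direction. This step is not well-defined as stated: without any integrability hypothesis (and Theorem B assumes none -- that is the point of distinguishing it from Theorem A), the individual bundles $E^i$ and the partial sums $E^1\oplus\cdots\oplus E^i$ need not integrate to invariant foliations, so there is no foliation to quotient by and no canonical ``family of interval maps'' carrying the leading-factor dynamics; the only available objects are the Burns--Wilkinson fake foliations, which are local, non-unique and only locally invariant, so a Denjoy--Schwartz-type argument on a quotient does not get off the ground. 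Moreover, even granting some version of the peeling, you would still need uniformity in $\hat x$ of the zero-entropy bound, which your sketch does not address.

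The paper avoids the $\ell$-dimensional problem altogether, and this is the idea missing from your proposal: using the domination between consecutive factors together with a Pliss-lemma argument (hyperbolic times for the bundles $E^{cs,i-1}$ and $E^{cu,i+1}$ along the fibers of the inverse limit, following the scheme of Diaz--Fisher--Pacifico--Vieitez), one shows that the leaves of the fake foliations $\widehat W^{cs,i-1}$ and $\widehat W^{cu,i+1}$ genuinely contract and expand along any segment of the infinite Bowen ball, and hence that $\Gamma_\delta(\hat x)$ is either $\{\hat x\}$ or contained in a \emph{single one-dimensional} curve $\gamma_i(\hat x)$ tangent to one bundle $E^i$. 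Once the Bowen ball sits in a one-dimensional curve, zero entropy is immediate (iterates of arcs have uniformly bounded length), giving $h$-expansiveness directly, with no induction and no integrability. To repair your proof you should replace the inductive peeling by this Pliss/hyperbolic-times argument, or supply a rigorous substitute for the quotient construction that does not presuppose integrability of the center factors.
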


The same conclusion holds when considering a compact $f$-invariant subset instead of the entire manifold.

This extends the work of C. Alvarez and M. Cantarino \cite{alvarez2022existence}, who addressed the case where the center bundle is one-dimensional.

In \cite{lima2024measures}, the authors proved that $ f $ admits finitely many measures of maximal entropy for codimension-one partially hyperbolic endomorphisms, i.e., when the splitting is of the form $ E^c \oplus E^u $ with $ \dim(E^c) = 1 $. This result assumes that the topological entropy of $ f $ exceeds the logarithm of its topological degree, i.e., $ h_{\mathrm{top}}(f) > \log \deg(f) $.

Finally, under the assumption that the integrated Lyapunov exponent $ \lambda^c(\tilde{\mu}) $ is bounded away from zero and that the system admits a stable subbundle $ E^s $, we establish the following:

\begin{teoC}
    Let $ f: M \to M $ be a $ C^{1+\alpha} $ partially hyperbolic endomorphism (locally diffeomorphism) with $ \dim(E^c_{x_n}) = 1 $. If $ \lambda^c(\tilde{\mu}) \neq 0 $ for every measure of maximal entropy $ \tilde{\mu} $, then $ f $ admits finitely many ergodic measures of maximal entropy.
\end{teoC}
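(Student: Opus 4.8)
The plan is to promote each ergodic measure of maximal entropy to a hyperbolic measure, code the hyperbolic measures of near-maximal entropy by a countable Markov shift in the style of Sarig and Ben Ovadia, and then read off finiteness from the structure theory of such shifts, with the nonvanishing of $\lambda^c$ playing the role of a strong-positive-recurrence hypothesis. First I would pass to the natural extension $\tilde f\colon\tilde M\to\tilde M$, a homeomorphism carrying a dominated splitting $\tilde E^s\oplus\tilde E^c\oplus\tilde E^u$ and on which entropy, ergodicity, and the set of m.m.e.\ are identified, via the canonical projection, with those of $f$. Theorem~B applied with $\ell=1$ furnishes at least one m.m.e., so the set $\mathcal M_{\max}$ of m.m.e.\ of $\tilde f$ is nonempty; by upper semicontinuity of the entropy (partially hyperbolic endomorphisms with one-dimensional center are asymptotically $h$-expansive) it is compact and convex, hence connected. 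An ergodic $\tilde\mu\in\mathcal M_{\max}$ has negative Lyapunov exponents along $E^s$ and positive ones along $E^u$ by uniform partial hyperbolicity, and its single central exponent equals $\lambda^c(\tilde\mu)\ne 0$ by hypothesis, so $\tilde\mu$ is a hyperbolic measure. Moreover $\lambda^c(\cdot)=\int\log\|D f|_{E^c}\|\,d(\cdot)$ is continuous and affine on measures (as $E^c$ is continuous and one-dimensional), hence continuous and nonvanishing on the connected set $\mathcal M_{\max}$; therefore it has constant sign and $\inf_{\mathcal M_{\max}}|\lambda^c|=c_0>0$. The two sign cases are symmetric (interchanging the stable and unstable directions of $\tilde f$), so we may assume $\lambda^c\ge c_0>0$ throughout; all ergodic m.m.e.\ are then hyperbolic with hyperbolicity constants uniform over $\mathcal M_{\max}$.

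Second, I would invoke a Sarig-type symbolic extension adapted to this setting: a countable Markov shift $(\Sigma,\sigma)$ together with a Hölder, finite-to-one, surjective factor map $\pi\colon\Sigma\to\tilde M$ such that every $\tilde f$-invariant ergodic hyperbolic measure with entropy close enough to $h_{\mathrm{top}}(f)$ and with the above uniform hyperbolicity constants --- in particular, by Step~1, every ergodic m.m.e.\ --- admits a $\sigma$-invariant ergodic lift $\hat\mu$ with $h_{\hat\mu}(\sigma)=h_{\tilde\mu}(\tilde f)$. Such coding theorems are due to Sarig and Ben Ovadia for $C^{1+\alpha}$ diffeomorphisms; here one applies the construction to $\tilde f$, using the uniform size of (fake) invariant plaques provided by partial hyperbolicity together with the $C^{1+\alpha}$ hypothesis, along the lines of the codimension-one endomorphism analysis in \cite{lima2024measures}. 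An ergodic measure of maximal entropy of $\sigma$ is supported on a single topologically transitive component $\Sigma'\subseteq\Sigma$, on which $h_{\mathrm{Gur}}(\Sigma')=h_{\mathrm{top}}(f)$.

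Third, I would conclude as follows. By Gurevich's theorem a topologically transitive countable Markov shift supports at most one measure of maximal entropy, so distinct ergodic m.m.e.\ of $f$ lift to distinct topologically transitive components of $\Sigma$ of full Gurevich entropy; it remains to bound the number of such components. Here the uniform bound $\lambda^c\ge c_0$ is essential: a sequence of invariant measures of $\sigma$ that ``escapes to infinity'' (leaves every finite subgraph) projects to measures of $\tilde f$ along which some Lyapunov exponent tends to zero, so near-maximal entropy cannot escape, i.e.\ the entropy at infinity of the coding is strictly below $h_{\mathrm{top}}(f)$. In this strongly-positive-recurrent regime, the spectral theory of countable Markov shifts (Sarig; Buzzi--Crovisier--Sarig; Iommi--Todd--Velozo) yields that only finitely many irreducible components attain the top Gurevich entropy, each supporting exactly one maximal measure. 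Pushing these finitely many measures down to $f$ gives the theorem.

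I expect the main obstacle to lie precisely at the interface between the smooth and symbolic pictures in Steps~2 and~3. On one hand, the Sarig--Ben Ovadia coding is developed for $C^{1+\alpha}$ diffeomorphisms of compact manifolds, while $\tilde f$ lives on the inverse-limit space $\tilde M$, which is not a manifold; one must check that the Pesin charts, the admissible-manifold formalism, and --- crucially --- the local finiteness of the resulting Markov partition survive, relying on the uniform geometry coming from partial hyperbolicity (this is where the codimension-one techniques of \cite{lima2024measures} would need to be extended). On the other hand, one must make rigorous the claim that a uniform lower bound on $|\lambda^c|$, together with uniform hyperbolicity of $E^s$ and $E^u$, forbids escape of near-maximal entropy to infinity; concretely this amounts to showing that every ergodic m.m.e.\ is strongly positive recurrent, which I would attack via a Katok-horseshoe / Ruelle-inequality argument showing that measures with some Lyapunov exponent near $0$ have entropy bounded away from $h_{\mathrm{top}}(f)$. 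Once these two points are secured, existence (Theorem~B), the sign/uniformity argument for $\lambda^c$, and Gurevich uniqueness per component combine routinely.
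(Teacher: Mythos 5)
Your symbolic route founders at a concrete point in Step~3, and Step~2 defers rather than supplies the hardest input. Strong positive recurrence, or an entropy-at-infinity bound, does \emph{not} imply that only finitely many transitive components of a countable Markov shift attain the top Gurevich entropy: a countable disjoint union of copies of a single SPR shift (say, a full shift on two symbols) is again a countable Markov shift with infinitely many transitive components, all SPR, all of the same Gurevich entropy, each carrying its own maximal measure. So the spectral theory you cite cannot by itself deliver finiteness; what excludes this in the smooth setting is a geometric argument showing that distinct high-entropy measures must be homoclinically related, and that is exactly the part your sketch leaves unproved. Your fallback mechanism for the no-escape claim is also the wrong tool: Ruelle's inequality bounds entropy by the sum of \emph{all} positive exponents, which includes the full unstable contribution, so a center exponent near zero is perfectly compatible with entropy close to $h_{\mathrm{top}}(f)$; the correct mechanism is upper semicontinuity of entropy (vanishing tail entropy, from asymptotic $h$-expansiveness in the one-dimensional-center case \cite{alvarez2022existence}) combined with continuity of $\lambda^c$ and the hypothesis on m.m.e.'s, not a Katok-horseshoe argument. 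Finally, in Step~2 there is no off-the-shelf Sarig--Ben Ovadia coding for $\tilde f$ acting on the solenoidal space $\widetilde{M}$; constructing such a coding is essentially the main technical content of \cite{lima2024measures} in the codimension-one case, so ``applying the construction to $\tilde f$'' assumes precisely the heaviest step.

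For comparison, the paper avoids coding altogether and argues by contradiction: if there were infinitely many m.m.e.\ $\tilde\mu_n$, a weak-$*$ limit $\tilde\mu$ is again an m.m.e.\ because $h^*(\tilde f)=0$, hence $\lambda^c(\tilde\mu)\neq 0$ by hypothesis, and continuity of $\lambda^c$ gives a uniform sign and a uniform bound $|\lambda^c(\tilde\mu_n)|>\chi$ for large $n$. The case $\lambda^c<0$ is quoted from \cite{lima2024measures}; in the case $\lambda^c>0$ a Pliss-lemma argument produces a set $\Lambda^u$ with $\tilde\mu_n(\Lambda^u)>\delta$ uniformly, on which local unstable manifolds have length bounded below, and then the endomorphism version of the Buzzi--Crovisier--Sarig homoclinic-relation criterion proved in \cite{lima2024measures} (uniform transversal intersections of these unstable manifolds with stable manifolds) forces the $\tilde\mu_n$ to be pairwise related and hence equal, a contradiction. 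Note also that your reduction ``the two sign cases are symmetric'' is not innocuous for endomorphisms, since unstable objects depend on the prehistory while stable ones do not; the paper treats the two signs by genuinely different arguments. If you want to salvage your approach, the missing ingredient is exactly this Pliss-plus-uniform-manifolds accumulation argument, at which point the symbolic superstructure becomes unnecessary.
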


The approach based on homoclinic intersections first appeared in the work of J.~Buzzi, S.~Crovisier, and O.~Sarig~\cite{buzzi2022measures}, and was later adapted by Y.~Lima and M.~Poletti~\cite{lima2025homoclinic}, as well as by D.~Obata in the context of standard maps~\cite{obata2021uniqueness}. For the finiteness of measures of maximal entropy, there have been recent advances for diffeomorphisms, such as in the work of J.~Mongez and M.~Pacifico~\cite{mongez2024finite}, and J.~Mongez, M.~Pacifico, and M.~Poletti~\cite{mongez2025partially}, particularly addressing cases where the center subspace has dimension greater than one.

To prove this theorem, we begin by showing that the integrated central Lyapunov exponent $\lambda^c(\widetilde{\mu})$ depends continuously on the invariant measure. Assuming that there exists a sequence of ergodic measures of maximal entropy $\mu_n$; then, by compactness, a subsequence accumulates on a measure $\mu$. By \cite{misiurewicz1976topological}, the limit measure $\mu$ is also a measure of maximal entropy, which implies by the hypotheses that $\lambda^c(\widetilde{\mu}) \neq 0$.

The case $\lambda^c(\widetilde{\mu}) < 0$ follows directly from the argument in \cite{lima2024measures}.
We focus on the case $\lambda^c(\widetilde{\mu}) > 0$. In this setting, we construct a set exhibiting uniform expansion. Using a Pliss Lemma argument, we show that every ergodic measure of maximal entropy gives positive measure to this expanding set. Moreover, the unstable manifolds of points in this set have uniformly large length. Since the angle between stable and unstable manifolds is uniformly bounded away from zero, we apply an argument due to Buzzi, Crovisier, and Sarig—adapted for endomorphisms and proved in \cite{lima2024measures}—to conclude that all such measures of maximal entropy must accumulate on a single homoclinic class, leading to a contradiction.

\bigskip

This article is organized as follows. In Section~2, we prepare the ground by discussing the natural extension of the endomorphism and explaining how the smooth structure can be lifted in such a way that the invariant bundles remain well-defined and related. We also present some fundamental tools from the theory of partially hyperbolic diffeomorphisms that will be used throughout the paper. In Sections~3 and~4, we prove Theorems~A and~B, respectively, by establishing that $\tilde{f}$ is asymptotically $h$-expansive. Finally, in Section~5, we address the proof of uniqueness.

\bigskip

\textbf{Acknowledgements:} We are thankful to Marisa Cantarino and Davi Obata for helpful conversations on this work.
A. Arbieto was partially supported by CAPES, CNPq, Projeto Universal,  PRONEX-Dynamical Systems and FAPERJ E-26/201.181/2022 Programa Cientista do Nosso Estado from Brazil''. E. Cabezas was partially supported by CAPES.

\section{Preliminaries}

\subsection{Measures of maximal entropy}

It is well known the result of P. Walters \cite{walters1975variational} who relates the metric entropy and the topological entropy:

\begin{theorem}[Variational Principle]
\label{varprinc}

Let $f: X \to X$ be a continuous map of a compact metric space and $\varphi \in C^0(X)$, it holds

\begin{equation*}
    P(f, \phi) = \sup\{ h_{\mu}(f) + \int \phi d\mu : \mu \in \mathcal{M}(f ) \}  = \sup\{ h_{\mu}(f) + \int \phi d\mu : \mu \in \mathcal{M}_e(f ) \}.
\end{equation*}
    
\end{theorem}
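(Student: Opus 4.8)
The plan is to establish the two inequalities $\sup\{h_\mu(f)+\int\varphi\,d\mu:\mu\in\mathcal M(f)\}\le P(f,\varphi)$ and $P(f,\varphi)\le\sup\{h_\mu(f)+\int\varphi\,d\mu:\mu\in\mathcal M(f)\}$ separately, and then to deduce the identity with the supremum over ergodic measures from the ergodic decomposition. I would first record the (routine) equivalence of the various definitions of topological pressure, working with $P(f,\varphi)=\lim_{\epsilon\to0}\limsup_{n}\frac1n\log\sup\{\sum_{x\in E}e^{S_n\varphi(x)}:E\text{ is }(n,\epsilon)\text{-separated}\}$, where $S_n\varphi=\sum_{j=0}^{n-1}\varphi\circ f^j$.

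For the first (easy) inequality, fix $\mu\in\mathcal M(f)$ and a finite Borel partition $\xi=\{A_1,\dots,A_k\}$. Using regularity of $\mu$ and compactness of $X$, I would replace $\xi$ by a partition with arbitrarily small diameter whose atoms have boundary of $\mu$-measure zero, losing only a controlled amount of entropy. Choosing one point in each atom of $\bigvee_{j=0}^{n-1}f^{-j}\xi$ produces an $(n,\epsilon)$-spanning set once $\epsilon$ is below a Lebesgue number of the covering, and a Jensen-type estimate applied to $t\mapsto -t\log t$ gives $H_\mu\big(\bigvee_{j=0}^{n-1}f^{-j}\xi\big)+\int S_n\varphi\,d\mu\le\log\sum_{x}e^{S_n\varphi(x)}$. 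Dividing by $n$, letting $n\to\infty$, and then refining $\xi$ yields $h_\mu(f)+\int\varphi\,d\mu\le P(f,\varphi)$.

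For the second (hard) inequality I would use the standard weak-$*$ construction: for each $n$ pick an $(n,\epsilon)$-separated set $E_n$ with $\sum_{x\in E_n}e^{S_n\varphi(x)}$ within a factor $2$ of the supremum, form $\sigma_n=\big(\sum_{x\in E_n}e^{S_n\varphi(x)}\big)^{-1}\sum_{x\in E_n}e^{S_n\varphi(x)}\delta_x$ and its Cesàro average $\mu_n=\frac1n\sum_{j=0}^{n-1}f^j_*\sigma_n$, and pass to a weak-$*$ accumulation point $\mu$, which is $f$-invariant. Taking a partition $\xi$ with $\mathrm{diam}(\xi)<\epsilon$ and $\mu(\partial\xi)=0$, a counting lemma gives $\log\sum_{x\in E_n}e^{S_n\varphi(x)}\le H_{\sigma_n}\big(\bigvee_{j=0}^{n-1}f^{-j}\xi\big)+\int S_n\varphi\,d\sigma_n$; the delicate point is to split $\{0,\dots,n-1\}$ into residue classes modulo a fixed $q$ and use concavity of the partition entropy under convex combinations to bound the right-hand side by $\frac{n}{q}H_{\mu_n}\big(\bigvee_{j=0}^{q-1}f^{-j}\xi\big)+n\int\varphi\,d\mu_n+o(n)$, then let $n\to\infty$ along the chosen subsequence using $\mu(\partial(\bigvee_{j=0}^{q-1}f^{-j}\xi))=0$ to obtain the upper semicontinuity $\limsup_n H_{\mu_n}(\bigvee_{j=0}^{q-1}f^{-j}\xi)\le H_\mu(\bigvee_{j=0}^{q-1}f^{-j}\xi)$. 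Dividing by $q$, then $q\to\infty$ and $\epsilon\to0$, gives $P(f,\varphi)\le h_\mu(f)+\int\varphi\,d\mu$.

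For the equality of the two suprema, one inclusion is immediate since $\mathcal M_e(f)\subseteq\mathcal M(f)$; for the other, given $\mu\in\mathcal M(f)$ I would invoke the ergodic decomposition $\mu=\int\mu_\omega\,d\tau(\omega)$ together with the affinity of $\mu\mapsto h_\mu(f)$ and of $\mu\mapsto\int\varphi\,d\mu$ to write $h_\mu(f)+\int\varphi\,d\mu=\int\big(h_{\mu_\omega}(f)+\int\varphi\,d\mu_\omega\big)\,d\tau(\omega)\le\sup_{\nu\in\mathcal M_e(f)}\big(h_\nu(f)+\int\varphi\,d\nu\big)$. The main obstacle is the bookkeeping in the hard direction — the residue-class decomposition and the semicontinuity step that forces the measure-zero-boundary choice of $\xi$ — everything else being soft; since this is Walters' classical result \cite{walters1975variational}, the alternative is simply to cite it.
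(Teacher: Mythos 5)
The paper does not prove this statement at all: it is quoted verbatim as Walters' classical Variational Principle with a citation to \cite{walters1975variational}, so there is no internal argument to compare yours against. Your sketch is the standard textbook proof (Walters/Misiurewicz): the easy inequality via finite partitions, Jensen applied to $t\mapsto -t\log t$, and comparison with $(n,\epsilon)$-spanning or separated sets; the hard inequality via the weighted atomic measures $\sigma_n$ on nearly maximal $(n,\epsilon)$-separated sets, their Ces\`aro averages, a weak-$*$ limit, a partition with null boundary and diameter below $\epsilon$, and the residue-class-modulo-$q$ decomposition with concavity of $H_{(\cdot)}$ to pass entropy to the limit measure; and the equality of the two suprema via the ergodic decomposition together with the integral formula $h_\mu(f)=\int h_{\mu_\omega}(f)\,d\tau(\omega)$ (Jacobs' theorem), which is the precise statement you need rather than mere affinity under finite convex combinations. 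The only genuinely loose point is the easy direction, where passing from suprema of $e^{S_n\varphi}$ over atoms of $\bigvee_{j=0}^{n-1}f^{-j}\xi$ to values at points of a spanning set needs the usual extra care (Walters shrinks atoms to compact subsets and controls how many elements of the join a dynamical ball can meet); as written, ``one point per atom gives a spanning set'' is not quite the argument, but this is a well-known repairable step. Given that the paper itself only cites the result, citing \cite{walters1975variational} is indeed the appropriate course, and your outline is a faithful summary of that proof.
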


This theorem tell us that  for a given potential $\phi$, we can approximate the topological pressure by its metric entropy with the asymptotical average of $\phi$.

For completeness of the statement we will define the topological pressure and the metric entropy.

Let $f:X \to X$ be a continuous map in a compact metric space. We call a potential on $X$ to any continuous function $\phi: X \to \R$. For each $n \in \N$, define $\phi_n : X \to \R$ by $\phi_n = \sum_{i=0}^{n-1} \phi \circ f^i$. 

Given an open cover $\alpha$ of $X$, let 

\begin{equation*}
    P_n(f,\phi,\alpha)= \inf \left\{ \sum_{U \in \gamma } \sup_{ x \in U} e^{\phi_n(x)} : \gamma \hbox{ is a finite subcover of }\alpha^n \right\}.
\end{equation*}

This sequence $\log P_n(f, \phi,\alpha)$ is subadditive, and so the limit 

\begin{equation*}
    P(f,\phi,\alpha)= \lim_n \dfrac{1}{n} \log P_n(f,\phi,\alpha)
\end{equation*}

exists.  

Define the \textit{pressure} of the potential $\phi$ with respect to $f$ to be the limit $P(f,\phi)$ of $P(f,\phi,\alpha)$ when the diameter of $\alpha$ goes to zero. In the particular case that $\phi \equiv 0$, $P(f,\phi)$ defines the topological entropy.  In the particular case where we restrict the definition of entropy to a compact set $K \subset X$, we refer to it as the \emph{conditional entropy} and denote it by $h(f, K)$.

Let $\mathcal{P}$ be a finite partition of $X$, with finite entropy, this means

\begin{equation*}
    H_{\mu}(\Pa) := \sum_{P \in \Pa} -\mu(P) \log \mu (P) < \infty.
\end{equation*}

Set

\begin{equation*}
    \Pa^n = \bigvee_{i=0}^{n-1} f^{-i}(\Pa) \hbox{ for each } n \geq 1,
\end{equation*}

where $\mathcal{A} \vee \mathcal{B} := \{ A \cap B : A \in \mathcal{A}, B \in \mathcal{B} \}$ for $\mathcal{A} $ and $ \mathcal{B}$ partitions of $X$. It is clear that the sequence $\Pa^n$ is non-decreasing, that is, $\Pa^n \prec \Pa^{n+1}$, this means that $\Pa^n$ is refined by $\Pa^{n+1}$. Furthermore,  $H_{\mu}(\Pa)$ is subadditive. It follows that the limit 

\begin{equation*}
    h_{\mu}(f,\Pa)= \lim_{n} \dfrac{1}{n} H_{\mu}(\Pa^n),
\end{equation*}

exists. We call $ h_{\mu}(f,\Pa)$ the \textit{entropy of f with respect to the partition $\Pa$}. Observe that this entropy is a non-decreasing function of the partition.

Finally, the \textit{entropy} of the system $(f, \mu)$ is defined by

\begin{equation*}
    h_{\mu}(f) = \sup_{\Pa} h_{\mu}(f,\Pa).
\end{equation*}

For further discussion, see, for example, \cite{viana2016foundations}. In the case that there exists a measure who attains the supremum, it is called an \textit{equilibrium state}, in particular when the potential $\varphi$ constant equals to zero, it is called a \textit{measure of maximal entropy}.

\bigskip
In the case that the system is expansive, $f$ is said to be expansive if there is a constant $\epsilon_0>0$ called the expansivity constant, such that for any pair of points $x \neq y$ in $X$ there is an integer $n$ such that $d(f^n(x),f^n(y)) \geq \epsilon_0$. It is well known that the metric entropy is upper semicontinuous. This basically follows from the fact that each partition with diameter less than the constant of expansivity $\epsilon_0$ will dinamically generate the Borel $\sigma$-algebra, and for any measure $\mu$ we can find a partition $\mathcal{P}$ with $\mu(\partial \mathcal{P})=0$, with diameter less than $\epsilon_0$. R. Bowen in \cite{bowen1972entropy} developed a condition to get upper semicontinuity of the metric entropy, which is now called entropy expansiveness.

To introduce the concept of entropy expansiveness ($h$-expansiveness), we begin by recalling the definition of dynamical balls. Let $ f: X \to X $ be a homeomorphism.

For $n \in \N$, the $(n, \epsilon)$-dynamical ball centered at $x \in X$ is defined as:
\begin{equation*}
    B(x, n, \epsilon) := \{ y \in X : d_n(x, y) < \epsilon \},
\end{equation*}
where 
\[
    d_n(x, y) := \max \{ d(f^j(x), f^j(y)) : j \in \{0, 1, \dots, n\} \}.
\]
Additionally, the dynamical ball of radius $\epsilon > 0$ centered at $x \in X$ is given by:

\[
    \Gamma_{\epsilon}(x) = \{ y \in X : d(f^n(x), f^n(y)) < \epsilon, \, n \in \mathbb{Z} \}.
\]
In the special case where $f$ is expansive, there exists an $\epsilon > 0$ such that $\Gamma_{\epsilon}(x) = \{x\}$ for all $x \in X$. This property reflects the fact that distinct points eventually separate under the action of $f$ when expansivity holds.

To generalize the notion of expansivity, we introduce the following construction. Define:

\begin{equation*}
    \Phi_{\epsilon}(x) = \bigcap_{n=0}^{\infty} B(x, n, \epsilon),
\end{equation*}
where $\Phi_{\epsilon}(x)$ represents the set of points whose forward orbits remain $\epsilon$-close to the orbit of $x$ for all future iterations.

\begin{definition}
Let $f: M \to M$ be a continuous map. We define the following notions:

- The map $f$ is said to be $h$-\textit{expansive} if $\exists \epsilon >0$ such that
        \[
        h^*_f(\epsilon) := \sup_{x \in M} h(f, \Phi_{\epsilon}(x)) = 0,
        \]
        where $h(f, \Phi_{\epsilon}(x))$ denotes the topological entropy restricted to the set $\phi_{\epsilon}(x)$.

- The map $f$ is called \textit{asymptotically $h$-expansive} if 
        \[
        \lim_{\epsilon \to 0} h^*_f(\epsilon) = 0.
        \]
    
\end{definition}

The \textit{tail entropy} of $f$, introduced by M. Misiurewicz in \cite{misiurewicz1976topological}, is defined as:
\[
h^*(f) := \lim_{\epsilon \to 0^+} h^*_f(\epsilon).
\]
It is worth noting that $h$-expansiveness implies asymptotic $h$-expansiveness. 

When $f$ is a homeomorphism, it holds that $\Gamma_{\epsilon}(x) \subset \Phi_{\epsilon}(x)$. Furthermore, R. Bowen \cite{bowen1972entropy} established the equality:
\[
h^*_f(\epsilon) = \sup_{x \in M} h(f, \Phi_{\epsilon}(x)) = \sup_{x \in M} h(f, \Gamma_{\epsilon}(x)),
\]
where $\Gamma_{\epsilon}(x)$ denotes the set of points whose orbits remain $\epsilon$-close to the orbit of $x$ for all time (both forward and backward).

\begin{theorem}{\cite{misiurewicz1973diffeomorphism}}
\label{semicont}
    If $f: M \to M$ is asymptotically $h$-expansive, then the entropy function is upper semi-continuous. In particular, $f$ admits a measure of maximal entropy.  
\end{theorem}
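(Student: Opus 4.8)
The plan is to prove the two assertions in turn: first that the map $\mu \mapsto h_\mu(f)$ is upper semicontinuous on the space $\mathcal{M}(f)$ of $f$-invariant Borel probability measures (with the weak$^{*}$ topology), and then to deduce the existence of a measure of maximal entropy from the fact that $\mathcal{M}(f)$ is nonempty and compact together with the Variational Principle (Theorem~\ref{varprinc}). This is the classical Misiurewicz--Bowen argument; its quantitative core is the Misiurewicz inequality
\begin{equation*}
h_\mu(f) \;\le\; h_\mu(f,\mathcal{P}) + h^{*}_f(\epsilon) \qquad \text{whenever } \operatorname{diam}(\mathcal{P}) < \epsilon \text{ and } \mu \in \mathcal{M}(f),
\end{equation*}
valid for every finite Borel partition $\mathcal{P}$ of $M$, which isolates the entropy ``not seen'' by $\mathcal{P}$ and bounds it by the tail entropy at the scale of $\mathcal{P}$.

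To establish this inequality I would fix an arbitrary finite partition $\mathcal{Q}$ refining $\mathcal{P}$ and use $h_\mu(f,\mathcal{Q}) \le h_\mu(f,\mathcal{P}) + h_\mu(f,\mathcal{Q}\mid \mathcal{P}^{-})$, where $\mathcal{P}^{-} = \bigvee_{i\ge 0} f^{-i}\mathcal{P}$ and the last term is the conditional entropy; it is bounded above by $\limsup_n \tfrac1n \sum_{P\in\mathcal{P}^n}\mu(P)\log N(\mathcal{Q}^n,P)$, where $N(\mathcal{Q}^n,P)$ is the number of atoms of $\mathcal{Q}^n$ meeting $P$. The geometric observation is that if $x\in P\in\mathcal{P}^n$ then $P\subseteq B(x,n-1,\epsilon)$ because $\operatorname{diam}(\mathcal{P})<\epsilon$, so $N(\mathcal{Q}^n,P)$ is dominated by the number of $\mathcal{Q}^n$-atoms meeting the dynamical ball $B(x,n-1,\epsilon)$. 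A spanning/separated-set estimate on the decreasing family $\{B(x,m,\epsilon)\}_{m}$, combined with a diagonal (compactness) argument that passes to the intersection $\phi_\epsilon(x)$ uniformly in $x$, then shows that the $\limsup$ above is at most $\sup_{x}h(f,\phi_\epsilon(x)) = h^{*}_f(\epsilon)$. Taking the supremum over $\mathcal{Q}\succ\mathcal{P}$ yields $h_\mu(f)\le h_\mu(f,\mathcal{P})+h^{*}_f(\epsilon)$.

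For the upper semicontinuity, fix $\mu\in\mathcal{M}(f)$ and $\delta>0$. Using asymptotic $h$-expansiveness, choose $\epsilon>0$ with $h^{*}_f(\epsilon)<\delta/2$. Choose a finite Borel partition $\mathcal{P}$ with $\operatorname{diam}(\mathcal{P})<\epsilon$ and $\mu(\partial\mathcal{P})=0$ (possible by covering $M$ with finitely many small balls whose boundary spheres are $\mu$-null). Since $\mu$ is $f$-invariant, $\partial\mathcal{P}^n\subseteq\bigcup_{i=0}^{n-1}f^{-i}(\partial\mathcal{P})$ is $\mu$-null, so $\nu\mapsto H_\nu(\mathcal{P}^n)$ is continuous at $\mu$ for every $n$; as $h_\nu(f,\mathcal{P})=\inf_n \tfrac1n H_\nu(\mathcal{P}^n)$ is an infimum of functions continuous at $\mu$, it is upper semicontinuous at $\mu$, and there is a weak$^{*}$ neighborhood $\mathcal{U}$ of $\mu$ on which $h_\nu(f,\mathcal{P})<h_\mu(f,\mathcal{P})+\delta/2\le h_\mu(f)+\delta/2$. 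Combining this with the Misiurewicz inequality applied to each $\nu\in\mathcal{U}$ (with the same $\mathcal{P}$ and $\epsilon$), $h_\nu(f)\le h_\nu(f,\mathcal{P})+h^{*}_f(\epsilon)<h_\mu(f)+\delta$, which proves that $\nu\mapsto h_\nu(f)$ is upper semicontinuous.

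Finally, $\mathcal{M}(f)$ is nonempty and weak$^{*}$-compact, and an upper semicontinuous function on a compact space attains its supremum; let $\mu_0$ be a maximizer of $\nu\mapsto h_\nu(f)$. By the Variational Principle (Theorem~\ref{varprinc} with $\varphi\equiv 0$), $h_{\mu_0}(f)=\sup_{\nu\in\mathcal{M}(f)}h_\nu(f)=h_{\mathrm{top}}(f)$, so $\mu_0$ is a measure of maximal entropy. The main obstacle is the proof of the Misiurewicz inequality, and within it the diagonal/compactness step that upgrades the finite-time counting estimates on the dynamical balls $B(x,n,\epsilon)$ to a bound, uniform in $x$, by the entropy of the infinite intersections $\phi_\epsilon(x)$; the remaining ingredients are either soft (semicontinuity of finite-partition entropy under the $\mu$-null-boundary hypothesis) or standard (compactness of $\mathcal{M}(f)$ and the Variational Principle).
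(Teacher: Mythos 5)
The paper does not give its own proof of this statement—it is quoted from the literature (Misiurewicz/Bowen)—so there is no internal argument to compare against, and your outline is precisely the classical proof from those references: the tail-entropy inequality $h_\mu(f)\le h_\mu(f,\mathcal{P})+h^*_f(\epsilon)$ for finite partitions of diameter less than $\epsilon$, upper semicontinuity of $\nu\mapsto h_\nu(f,\mathcal{P})$ via partitions with $\mu$-null boundaries, and then weak-$*$ compactness of $\mathcal{M}(f)$ together with the Variational Principle. The argument is correct as proposed; the only step left schematic is the one you yourself flag, namely the counting/compactness lemma upgrading the estimates on $\mathcal{Q}^n$-atoms inside the dynamical balls $B(x,n,\epsilon)$ to a bound by $\sup_x h(f,\phi_\epsilon(x))$, which is exactly Bowen's key lemma and where the genuine work of the cited proof lies.
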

\begin{re}
    If $h_{\mu}(f)$ is upper semi continuous, there is  an equilibrium state for any continuous potential, since $\int \varphi d\mu$ is continuous. 
\end{re}
\subsection{Inverse Limit}

In the context of endomorphisms, the non-uniqueness of past trajectories for points in the metric space can pose significant challenges when attempting to generalize results from diffeomorphisms. This motivates the construction of an extended system in which the dynamics are represented by a homeomorphism, thereby ensuring a well-defined inverse dynamics. As we will see in this subsection, a suitably defined lift to such an extended system will establish a correspondence between invariant measures and entropy between the original and the extended systems.

Let $(X, d)$ be a metric space, and let $f: X \to X$ be a continuous map. We define the inverse limit (also referred to as the natural extension) of the dynamical system as follows:

- The space $\widetilde{X}$ is defined as:
    \[
        \widetilde{X} := \{ \widetilde{x} = (x_n)_{n \in \mathbb{Z}} \in X^{\mathbb{Z}} : x_{n+1} = f(x_n), \, n \in \mathbb{Z} \}.
    \]

- The map $\widetilde{f}: \widetilde{X} \to \widetilde{X}$ is given by:
    \[
        \widetilde{f}((x_n)_n) = (x_{n+1})_n.
    \]

- A metric $\widetilde{d}$ on $\widetilde{X}$ is defined as:
    \[
        \widetilde{d}(\widetilde{x}, \widetilde{y}) = \sum_{k=-\infty}^\infty \frac{d(x_k, y_k)}{2^{|k|}},
    \]
    where $\widetilde{x} = (x_n)_n$ and $\widetilde{y} = (y_n)_n$ are elements of $\widetilde{X}$.

The natural projection $p: \widetilde{X} \to X$ is defined by:
\[
    p((x_n)_n) = x_0.
\]
This projection guarantees that the following diagram commutes:
\[
\begin{tikzcd}
\widetilde{X} \arrow{r}{\widetilde{f}} \arrow[swap]{d}{p} & \widetilde{X} \arrow{d}{p}\\
X \arrow{r}{f} & X
\end{tikzcd}
\]

In the case where the endomorphism $f$ is a local diffeomorphism on a manifold $X$, the lift $\widetilde{X}$ forms a solenoidal manifold, which is locally homeomorphic to the product of a disc and a Cantor set. For further details on this structure, we refer the reader to \cite{verjovsky22} and (\cite{andersson2022non}, Appendix A).

\textbf{Notation}: We denote $B(x,\delta) \subset X$ to a neighborhood of $x$ with radius $\delta$ and $B(\tilde{x},\delta) \subset \widetilde{X}$ to a neighborhood of $x$ with radius $\delta$ such that $p(\tilde{x})=x$.

For any $f$-invariant measure $\mu$, it is possible to construct a lift of this measure via the natural projection, resulting in an $\widetilde{f}$-invariant measure $\widetilde{\mu}$. The following proposition establishes that this correspondence is in fact bijective.

\begin{prop}
\label{bijm}
Let $(X,d)$ be a compact metric space and $f: X \to X$ continuous. For any $f$-invariant probability measure $\mu$ on $X$, there is a unique $\widetilde{f}$-invariant probability measure $\widetilde{\mu}$ on $\widetilde{X}$ such that $p_* \widetilde{\mu}= \mu$.     
\end{prop}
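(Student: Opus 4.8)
The plan is to work with the whole family of coordinate projections on the natural extension, not just with $p=p_0$. For $n\in\mathbb{Z}$ let $p_n\colon\widetilde{X}\to X$ be $p_n((x_k)_k)=x_n$, so that $p_0=p$, that $p_n\circ\widetilde{f}=p_{n+1}$ and $f\circ p_n=p_{n+1}$, and in particular that $p_{-m}=f\circ p_{-m-1}$ is measurable with respect to the $\sigma$-algebra generated by $p_{-m-1}$. Writing $\mathcal{A}_m$ for the sub-$\sigma$-algebra of the Borel sets of $\widetilde{X}$ generated by $p_{-m}$ ($m\ge 0$), the first step I would take is a structural fact: \emph{every Borel probability measure $\widetilde{\mu}$ on $\widetilde{X}$ is uniquely determined by the sequence of past marginals $(p_{-m})_*\widetilde{\mu}$, $m\ge 0$}. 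Indeed, by the measurability remark the $\mathcal{A}_m$ form an increasing chain, and $\bigcup_{m\ge 0}\mathcal{A}_m$ is an algebra which generates the Borel $\sigma$-algebra of $\widetilde{X}$: for any $n\in\mathbb{Z}$, choosing $m$ with $-m\le n$ gives $p_n=f^{\,n+m}\circ p_{-m}$, so preimages under $p_n$ lie in $\mathcal{A}_m$, while such preimages over all $n$ generate the Borel $\sigma$-algebra of $\widetilde{X}$. Since two probability measures agreeing on a generating algebra coincide, $\widetilde{\mu}$ is determined by its restrictions to the $\mathcal{A}_m$, i.e. by the measures $(p_{-m})_*\widetilde{\mu}$ on $X$.

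Uniqueness follows at once. If $\widetilde{\mu}$ is $\widetilde{f}$-invariant with $p_*\widetilde{\mu}=\mu$, then for every $m\ge 1$ one has $(p_{-m})_*\widetilde{\mu}=(p_{-m})_*\widetilde{f}_*\widetilde{\mu}=(p_{-m+1})_*\widetilde{\mu}$, so inductively $(p_{-m})_*\widetilde{\mu}=(p_0)_*\widetilde{\mu}=\mu$ for all $m\ge 0$; by the structural fact this pins down $\widetilde{\mu}$ completely, so at most one such measure exists.

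For existence I would run a Krylov--Bogolyubov argument on $\widetilde{X}$. First, $\mu$ is carried by the compact $f$-invariant core $Y:=\bigcap_{n\ge 0}f^n(X)$: since $f^{-1}(X\setminus f(X))=\varnothing$, invariance gives $\mu(X\setminus f(X))=0$, and iterating, $\mu(Y)=1$; moreover $p$ maps $\widetilde{X}$ onto $Y$. As $p$ is a continuous surjection between compact metric spaces, the induced map on spaces of Borel probability measures is onto, so we may fix a Borel probability measure $\nu$ on $\widetilde{X}$ with $p_*\nu=\mu$. Let $\widetilde{\mu}$ be a weak-$*$ accumulation point of the Ces\`aro averages $\frac1N\sum_{k=0}^{N-1}\widetilde{f}^{\,k}_*\nu$; it is $\widetilde{f}$-invariant by the standard argument. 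Since $p_*$ is weak-$*$ continuous and $p_*(\widetilde{f}^{\,k}_*\nu)=f^k_*p_*\nu=f^k_*\mu=\mu$, the $p$-push-forward of every Ces\`aro average equals $\mu$, whence $p_*\widetilde{\mu}=\mu$. (Alternatively, the constant projective family $\mu_m\equiv\mu$, consistent because $f_*\mu=\mu$, yields a measure $\widetilde{\mu}$ on $\widetilde{X}=\varprojlim(X,f)$ with $(p_{-m})_*\widetilde{\mu}=\mu$ via the Kolmogorov extension theorem for inverse limits of compact metric spaces; its $\widetilde{f}$-invariance then follows from the structural fact, since $\widetilde{f}_*\widetilde{\mu}$ has the same past marginals as $\widetilde{\mu}$.)

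I expect the only delicate point to be the structural fact, and within it the assertion that $\bigcup_m\mathcal{A}_m$ generates the entire Borel $\sigma$-algebra of $\widetilde{X}$; this uses crucially the defining relation $x_k=f^{\,k-j}(x_j)$ for $j\le k$, which expresses every coordinate of non-negative shift as a function of a single earlier coordinate. Everything else is routine bookkeeping.
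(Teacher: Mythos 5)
Your argument is correct. Note that the paper does not prove this proposition at all: it simply cites Proposition I.3.1/I.3.4 of Qian et al., where the standard construction defines $\widetilde{\mu}$ directly on the cylinder sets $p_{-m}^{-1}(B)$ by $\widetilde{\mu}(p_{-m}^{-1}(B)):=\mu(B)$, checks consistency using $f_*\mu=\mu$, and extends — i.e.\ essentially the Kolmogorov/inverse-limit route you mention only parenthetically. Your main route is genuinely different and self-contained: uniqueness via the observation that the increasing family $\sigma(p_{-m})$ generates the Borel $\sigma$-algebra of $\widetilde{X}$ (so a measure is determined by its past marginals, all of which are forced to equal $\mu$ by invariance), and existence by lifting $\mu$ through the continuous surjection $p:\widetilde{X}\to Y=\bigcap_{n\ge 0}f^n(X)$ and applying Krylov--Bogolyubov, using $p\circ\widetilde{f}=f\circ p$ to see that every Ces\`aro average still pushes forward to $\mu$. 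What your approach buys is that no consistency bookkeeping or extension theorem is needed; the price is heavier use of compactness (surjectivity of $p_*$ on probability measures, weak-$*$ compactness of $\mathcal{M}(\widetilde{X})$), whereas the cited construction is more explicit and identifies $\widetilde{\mu}$ on cylinders at once. The only steps you leave implicit — that $p(\widetilde{X})=Y$ (a nested-compact-sets argument producing full backward orbits inside $Y$) and that $\widetilde{d}$ induces the product topology so the coordinate maps $p_n$ generate the Borel $\sigma$-algebra — are standard and easily supplied, so there is no gap.
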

Furthermore, it is well known that the inverse limit construction preserves entropy. Specifically, we have the following result:

\begin{prop}
\label{meas}
    Let $T:X \to X$ be a continuous map  on the compact metric space $X$ with an invariant Borel probability measure $\mu$. Let $\widetilde{X}$ be the inverse limit space of $(X,T)$, with $\widetilde{\mu}$ an $\widetilde{T}$-invariant Borel probability measure on $\widetilde{X}$ such that $p_* \widetilde{\mu} = \mu $. Then

    \begin{equation*}
        h_{\mu}(T)= h_{\widetilde{\mu}}(\widetilde{T})
    \end{equation*}
    where $h_\mu(T)$ and $h_{\widetilde{\mu}}(\widetilde{T})$ denote the metric entropies of $T$ and $\widetilde{T}$ with respect to $\mu$ and $\widetilde{\mu}$, respectively.
    
\end{prop}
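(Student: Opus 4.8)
The plan is to establish the two inequalities $h_\mu(T)\le h_{\widetilde\mu}(\widetilde T)$ and $h_{\widetilde\mu}(\widetilde T)\le h_\mu(T)$ separately. The first is the familiar ``entropy of a factor'' inequality applied to the measure-preserving factor map $p$; the second exploits the product-like structure of $\widetilde X$, namely that its Borel $\sigma$-algebra is generated by sets depending on only finitely many coordinates, so that partitions lifted from $X$ are essentially exhaustive. Throughout one uses that $(\widetilde X,\widetilde d)$ is compact (being a closed subset of $X^{\mathbb Z}$), so the Kolmogorov--Sinai machinery and small-diameter finite partitions are available.

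For the first inequality, I would note that $p\circ\widetilde T=T\circ p$ gives $\widetilde T^{-i}p^{-1}\mathcal P=p^{-1}T^{-i}\mathcal P$ for every finite partition $\mathcal P$ of $X$ and every $i$, hence $\bigvee_{i=0}^{n-1}\widetilde T^{-i}p^{-1}\mathcal P=p^{-1}\big(\bigvee_{i=0}^{n-1}T^{-i}\mathcal P\big)$. Since $p_*\widetilde\mu=\mu$, one has $H_{\widetilde\mu}(p^{-1}\mathcal Q)=H_\mu(\mathcal Q)$ for every finite partition $\mathcal Q$ of $X$; dividing by $n$ and letting $n\to\infty$ yields $h_{\widetilde\mu}(\widetilde T,p^{-1}\mathcal P)=h_\mu(T,\mathcal P)$, and taking the supremum over $\mathcal P$ gives $h_{\widetilde\mu}(\widetilde T)\ge h_\mu(T)$.

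For the reverse inequality, the key observation is that from $\widetilde d(\widetilde x,\widetilde y)=\sum_{k}2^{-|k|}d(x_k,y_k)$ every $\widetilde d$-ball is, up to arbitrarily small error, a finite intersection of sets of the form $\{\widetilde x: x_k\in U\}=\widetilde T^{-k}p^{-1}(U)$ with $U\subseteq X$ open; hence these ``finite-coordinate'' cylinders form a basis for the topology of $\widetilde X$ and generate its Borel $\sigma$-algebra. Consequently, given any finite partition $\widetilde{\mathcal Q}$ of $\widetilde X$ and any $\epsilon>0$, I can choose a finite partition $\mathcal P$ of $X$ and an integer $N\in\N$ so that $\mathcal R:=\bigvee_{i=-N}^{N}\widetilde T^{-i}p^{-1}\mathcal P$ satisfies $H_{\widetilde\mu}(\widetilde{\mathcal Q}\mid\mathcal R)<\epsilon$, using the standard fact that conditional entropy tends to $0$ along an increasing sequence of partitions that generates the $\sigma$-algebra. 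Then $h_{\widetilde\mu}(\widetilde T,\widetilde{\mathcal Q})\le h_{\widetilde\mu}(\widetilde T,\mathcal R)+H_{\widetilde\mu}(\widetilde{\mathcal Q}\mid\mathcal R)<h_{\widetilde\mu}(\widetilde T,\mathcal R)+\epsilon$; and since $\widetilde T$ is a homeomorphism, $h_{\widetilde\mu}(\widetilde T,\bigvee_{i=-N}^{N}\widetilde T^{-i}\mathcal S)=h_{\widetilde\mu}(\widetilde T,\bigvee_{i=0}^{2N}\widetilde T^{-i}\mathcal S)=h_{\widetilde\mu}(\widetilde T,\mathcal S)$ for any finite partition $\mathcal S$. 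Applying this with $\mathcal S=p^{-1}\mathcal P$ and using the first step, $h_{\widetilde\mu}(\widetilde T,\mathcal R)=h_{\widetilde\mu}(\widetilde T,p^{-1}\mathcal P)=h_\mu(T,\mathcal P)\le h_\mu(T)$. Hence $h_{\widetilde\mu}(\widetilde T,\widetilde{\mathcal Q})<h_\mu(T)+\epsilon$, and letting $\epsilon\to0$ and taking the supremum over $\widetilde{\mathcal Q}$ gives $h_{\widetilde\mu}(\widetilde T)\le h_\mu(T)$.

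I expect the main obstacle to be making the approximation step precise: checking that the finite-coordinate cylinders genuinely generate the Borel $\sigma$-algebra of $(\widetilde X,\widetilde d)$ and invoking the correct convergence statement for conditional entropy along generating sequences of partitions. Everything after that is routine manipulation with Kolmogorov--Sinai, the $\widetilde T$-invariance of entropy, and the monotonicity/subadditivity inequalities $h_\mu(T,\mathcal A)\le h_\mu(T,\mathcal B)+H_\mu(\mathcal A\mid\mathcal B)$.
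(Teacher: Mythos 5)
Your two-inequality argument is correct, but note that the paper itself does not prove this proposition at all: it simply cites Proposition I.3.1 and Proposition I.3.4 of the book of Qian--Xie--Zhu \cite{qian2009smooth}, so there is no in-paper proof to compare against. What you write is essentially the classical argument behind that reference: the factor inequality $h_\mu(T)\le h_{\widetilde\mu}(\widetilde T)$ via $\widetilde T^{-i}p^{-1}\mathcal P=p^{-1}T^{-i}\mathcal P$ and $p_*\widetilde\mu=\mu$, and the reverse inequality from the fact that finite-coordinate cylinders $\widetilde T^{-k}p^{-1}(U)$ generate the Borel $\sigma$-algebra of $\widetilde X$. The one step you rightly flag as delicate can be made precise (and in fact simplified) as follows: choose nested finite partitions $\mathcal P_1\prec\mathcal P_2\prec\cdots$ of $X$ with $\operatorname{diam}\mathcal P_m\to0$, and set $\mathcal R_m:=\bigvee_{i=-m}^{m}\widetilde T^{-i}p^{-1}\mathcal P_m$; since $\widetilde d(\widetilde x,\widetilde y)=\sum_k 2^{-|k|}d(x_k,y_k)$, the $\widetilde d$-diameter of the atoms of $\mathcal R_m$ is at most $\bigl(\sum_{|k|\le m}2^{-|k|}\bigr)\operatorname{diam}\mathcal P_m+2^{-m+1}\operatorname{diam}(X)\to0$, so $(\mathcal R_m)_m$ is an increasing sequence of partitions whose diameters tend to zero. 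You may then either invoke the martingale convergence of conditional entropy, as you do, or avoid conditional entropy altogether by the Kolmogorov--Sinai criterion for generating sequences, which gives directly $h_{\widetilde\mu}(\widetilde T)=\lim_m h_{\widetilde\mu}(\widetilde T,\mathcal R_m)=\lim_m h_{\widetilde\mu}(\widetilde T,p^{-1}\mathcal P_m)=\lim_m h_\mu(T,\mathcal P_m)\le h_\mu(T)$, using the standard identity $h_{\widetilde\mu}\bigl(\widetilde T,\bigvee_{i=-N}^{N}\widetilde T^{-i}\mathcal S\bigr)=h_{\widetilde\mu}(\widetilde T,\mathcal S)$ for the homeomorphism $\widetilde T$. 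With that step filled in, your proof is complete and self-contained, which is arguably a gain over the paper's bare citation.
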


By Theorem \ref{varprinc}, topological entropy of $(\widetilde{X}, \widetilde{f})$ coincides with that of $(X, f)$.

We refer the reader to (\cite{qian2009smooth}, Proposition I.3.1 and Proposition I.3.4) for the proof of this proposition.


\subsection{Partially hyperbolic endomorphism and dominated splitting}

Since the natural candidate to be the unstable subbundle $E^u$ is not well defined, for it depends on the past orbit of the point, the definition of partial hyperbolicity for non-invertible maps is not direct. A solution for that is to define the tangent bundle and the subspaces along an orbit $\widetilde{x}$, where $\widetilde{x} = (x_n)_{n \in \Z} \in p^{-1}(x)$.

We can associated a vector bundle $T \widetilde{M}$ to $\widetilde{M}$ by the pullback under $p$ from $TM$, where $(\widetilde{x},v) \in T \widetilde{M}$, $\widetilde{x}$ belongs to $\widetilde{M}$  and $v$ is a tangent vector in $ T_{x_0}M$. The derivative $Df$ lifts to a map $D \widetilde{f}$ of $T \widetilde{M}$ in a natural way.

\begin{definition}
    Let $f: M \to M$ be a $C^1$ local diffeomorphism, we say tat $f$ is a  \textit{partially hyperbolic endomorphism} if for any $(x_n)_{n \in \mathbb{Z}} \in \widetilde{M}$, there exists a  splitting $T_{x_n}M = E^u_{x_n} \oplus E^c_{x_n} \oplus E^s_{x_n}$ satisfying 

\begin{enumerate}
    \item $Df_{x_n}(E^{\sigma}_{x_n})= E^{\sigma}_{f(x_n)}$, $ \sigma \in \{ s,c,u \}$, for any $n \in \mathbb{Z}$ ($Df$-invariant),

    \item If $\widetilde{x} \in \widetilde{M}$, then

\begin{equation*}
    ||Df_{x_i}v^s||< ||Df_{x_i}v^c|| < ||Df_{x_i} v^u||
\end{equation*}

    for  $v^{\sigma} \in E^{\sigma}(x_i)$ unitary vectors, where $\sigma \in \{ s,c,u \}$.  Furthermore $||Df|_{E^s_{x_i}}|| < 1$, $||Df|_{E^u_{x_i}}|| > 1$ for any $\widetilde{x} \in \widetilde{M}$ and $i \in \mathbb{Z}$.    
\end{enumerate}
    
\end{definition}
 

We say that $f$ is \textit{ dynamically coherent} if $E^c$ integrates for any given $\widetilde{x}$, this means that there exist a unique $C^1$ manifold in $M$ which depends on $(x_n)_n$ and by definition of $D\tilde{f}$ is tangent to $E^c_{\tilde{x}}$. for the setting of our first result we precise a desintegration along each subspace of the central space $E^c$, for this we define: 

\begin{definition}
    Let $f: M \to M$ be a partially hyperbolic endomorphism. Let $\widetilde{x} \in \widetilde{M}$, we say that its center bundle $E^c_{\widetilde{x}}$ is \textit{simple} if 

\begin{itemize}
    \item $E^c(x_n) = E^1(x_n) \oplus \cdots \oplus E^{\ell}(x_n)$ with $\dim E^i(x_n)=1 $, for every $i =1,...,\ell$, $\widetilde{x}= (x_n)_{n} \in \widetilde{M}$,
    \item For every $S \subset \{1, \cdots , \ell\}$ the bundle $E^S:= \oplus_{i \in S} E^i $ integrates to an $f$-invariant foliation $\mathcal{F}^S$ along each fiber of $\widetilde{x}$ (in particular $E^c = E^{\{1, \cdots , \ell \}}$ is integrable). Consequently, if $S \subset S'$ then $\mathcal{F}^S$ sub-foliates $\mathcal{F}^{S'}$.
\end{itemize}
    
\end{definition}

Analogously to Anosov endomorphism \cite{mane1975stability} and partially hyperbolic endomorphism \cite{micena2021some}, it seems natural to propose the following definition for partially hyperbolic endomorphism with dominated splitting:

\begin{definition}
    Given $f: M \to M$ a partially hyperbolic endomorphism, we say that the splitting $T_{x_n} M = E^s(x_n) \oplus E^1(x_n) \oplus \cdots \oplus E^{\ell}(x_n) \oplus E^u(x_n)$ is \textit{dominated} if there exist $C>0$ $\lambda \in (0,1)$ such that 

\begin{equation}
\label{ds}
    ||Df^n|_{E^{cs,i}(x_n)}|| < C \lambda^n ||Df^n|_{E^{cu,i+1}(x_n)}||,
\end{equation}
for all $\widetilde{x} \in \widetilde{M}, i \in \{0,...,k\}$ and $n \geq 0$.   
Where
\begin{eqnarray}
    \label{csu}
    E^{cs,i}(x_n):= E^s(x_n) \oplus E_1(x_n) \oplus \cdots \oplus E_i(x_n) \nonumber,\\ 
    E^{cu,i+1}(x_n):=E_{i+1}(x_n) \oplus \cdots \oplus E_k(x_n) \oplus E_u(x_n).
\end{eqnarray}
We also let $E^{cs,0}=E^s$ and $E^{cu,k+1}=E^u$ and write $s=dim(E^s)$ and $u=dim(E^u)$.
 
\end{definition}
The definition of this splitting is unique, continuous along $\widetilde{M}$ and the  angles between $E^{cs,i}$ and $E^{cu,i+1}$ are bounded away from zero, the same arguments from the diffeomorphism case works for this splitting, see for example \cite{crovisier2015introduction}.

It is not difficult to choose an adapted Riemannian metric equivalent to the original such that $C=1$, see for instance \cite{micena2021some}. 

Since $f: M \to M$ is a local diffeomorphism, for any $\tilde{x}=(x_n)_n$ we can define $Df^{-1}x_{-n} : T_{x_{-n+1}}M \to T_{x_{-n}}M$, where $\tilde{f}^{-1}(x_{-n+1})=x_{-n}$.

\subsection{Lyapunov exponents}

The following theorem is an endomorphism version of Oseledets  theorem, given in \cite{qian2009smooth} 

\begin{theorem}
    Let $f$ be a $C^1$ endomorphism on $M$. there exists a Borel set $\tilde{\Delta} \subset \widetilde{M}$ such that $\tilde{f}(\tilde{\Delta}) = \tilde{\Delta}$ and $\tilde{\mu}(\tilde{\Delta})=1$, where $\tilde{\mu}:= p *\mu$. Furthermore for every $\tilde{x}= \{ x_n\}_{n \in \Z} \in \tilde{\Delta}$, there is a splitting of the tangent space $T_{x_0}M$

\begin{equation*}
    T_{x_0}M = E_1(\tilde{x}) \oplus E_2(\tilde{x}) \oplus \cdots \oplus E_{r(\tilde{x})}(\tilde{x})
\end{equation*}

and numbers $+\infty > \lambda_1(\tilde{x}) > \lambda_2(\tilde{x}) > \cdots  > \lambda_{r(\tilde{x})}(\tilde{x})> -\infty$ and $m_i(\tilde{x})$ for $i = 1,2,...,r(\tilde{x})$, such that 

\begin{enumerate}
    \item $Df_{x_n}: T_{x_{n}}M \to T_{x_{n+1}}M$ is an isomorphism, $\forall n \in \Z$,
    \item $r(\cdot), \lambda_i(\cdot)$ and $m_i(\cdot)$ are $\tilde{f}$-invariant i.e.,

\begin{equation*}
    r(\tilde{f}(\tilde{x})) = r(\tilde{x}), \; \: \lambda_i(\tilde{f}(\tilde{x})) = \lambda_i(\tilde{x}), \: \hbox{ and } \: m_i(\tilde{f}(\tilde{x})) = m_i(\tilde{x})
\end{equation*}

for each $i = 1,...,r(\tilde{x})$.

\item $dim \, E_i(\tilde{x}) = m_i(\tilde{x})$ for all $n \in \Z$ and $1 \leq i \leq r(\tilde{x})$.  
\item The splitting is invariant under $Df$, i.e.,

\begin{equation*}
    Df_{x_n} E_i(\tilde{f}^n(\tilde{x})) = E_i(\tilde{f}^{n+1}(\tilde{x}))
\end{equation*}
for all $n \in \Z$ and $1 \leq i \leq r(\tilde{x})$.
\item For any $n,m \in \Z$, let

\begin{equation*}
 Df^m_n(\tilde{x}) = \begin{cases}
       Df^m_{x_n} & \text{if } m >0 \\
       id & \text{if } m=0 \\
       (Df_{n+m}^{-m})^{-1} & \text{if } m<0
     \end{cases}
\end{equation*}

then,

\begin{equation*}
    \lim_{m \to \pm \infty} \dfrac{1}{m} \log |Df^m_n(\tilde{x}) \xi| = \lambda_i (\tilde{x}),
\end{equation*}

for all $0 \neq \xi \in E_i(\tilde{f}^n(\tilde{x}))$, $1 \leq i \leq r(\tilde{x})$.

\end{enumerate}
    
\end{theorem}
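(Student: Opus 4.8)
The plan is to reduce the non-invertible situation to the classical multiplicative ergodic theorem by passing to the natural extension, where the dynamics becomes invertible and the derivative cocycle becomes honestly two-sided. First I would set up the linear cocycle: over the homeomorphism $\widetilde{f}\colon \widetilde{M}\to\widetilde{M}$ consider the map $A\colon\widetilde{M}\to GL(d,\R)$ (after trivializing the pulled-back bundle $T\widetilde{M}$, or working intrinsically with the vector bundle) given by $A(\widetilde{x}) = Df_{x_0}$. Because $f$ is a $C^1$ local diffeomorphism and $M$ is compact, $Df_{x_0}$ is invertible for every $x_0$, with $\|Df^{\pm1}\|$ uniformly bounded; hence $\log^+\|A^{\pm1}\|\in L^1(\widetilde{\mu})$, and the one-sided cocycle generated by $A$ over $\widetilde{f}$ extends to a two-sided cocycle $Df^m_n(\widetilde{x})$ exactly as written in item (5). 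The key point that makes this work for an endomorphism — and the reason one must lift — is that although $Df_{x_0}$ alone does not determine a past, the point $\widetilde{x}=(x_n)_{n\in\Z}$ does: the matrices $Df_{x_{-1}}, Df_{x_{-2}},\dots$ are well defined along $\widetilde{x}$, so the backward cocycle is genuinely defined $\widetilde\mu$-a.e.

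Next I would invoke the two-sided multiplicative ergodic theorem of Oseledets (in the cocycle formulation, e.g. as in Ruelle or in Qian--Xie--Zhu) applied to $(\widetilde{M},\widetilde{f},\widetilde{\mu})$ and the integrable cocycle $A$. This yields a $\widetilde{f}$-invariant Borel set $\widetilde\Delta\subset\widetilde M$ with $\widetilde\mu(\widetilde\Delta)=1$, on which one has the Oseledets splitting $T_{x_0}M=E_1(\widetilde x)\oplus\cdots\oplus E_{r(\widetilde x)}(\widetilde x)$, the exponents $\lambda_1(\widetilde x)>\cdots>\lambda_{r(\widetilde x)}(\widetilde x)$ with multiplicities $m_i(\widetilde x)=\dim E_i(\widetilde x)$, the $\widetilde f$-invariance of $r,\lambda_i,m_i$ (item (2)), the $Df$-equivariance of the subspaces (item (4)), and the two-sided asymptotics in item (5) for the cocycle $Df^m_n$. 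Item (1) — that each $Df_{x_n}$ is an isomorphism — is immediate from $f$ being a local diffeomorphism. The finiteness and boundedness of the exponents ($+\infty>\lambda_1$, $\lambda_{r}>-\infty$) follows from the uniform bounds on $\|Df^{\pm1}\|$ on the compact manifold $M$. One should also note that $\widetilde\Delta$ can be taken $\widetilde f$-invariant in both directions since $\widetilde f$ is a homeomorphism; if the original statement wants forward invariance $\widetilde f(\widetilde\Delta)=\widetilde\Delta$ that is a weakening of the two-sided invariance the MET provides.

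A few technical items I would attend to: to speak of a global matrix cocycle one needs a measurable trivialization of the bundle $\widetilde x\mapsto T_{x_0}M$; this is standard since $\widetilde M$ is a compact metric space and the bundle is continuous, or one can avoid it entirely by using the vector-bundle version of the MET. One also checks that the splitting and exponents depend only on $\widetilde x$ (not on an auxiliary choice) and are Borel measurable in $\widetilde x$, which is part of the conclusion of the MET. Finally, the statement as phrased writes $E_i(\widetilde x)$ for the fiber at $x_0$ and $E_i(\widetilde f^n(\widetilde x))$ for the fiber at $x_n$; I would make this bookkeeping explicit, i.e.\ $E_i(\widetilde f^n(\widetilde x))\subset T_{x_n}M$, so that the equivariance relation $Df_{x_n}E_i(\widetilde f^n(\widetilde x))=E_i(\widetilde f^{n+1}(\widetilde x))$ and the limit formula are literally the pullback of the standard cocycle conclusions.

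\textbf{Main obstacle.} There is no deep obstacle: the only real content is recognizing that the natural extension is precisely the device that converts the ill-posed backward dynamics of the endomorphism into a genuine two-sided cocycle, and then quoting the invertible MET. The mildly delicate point is the measurable trivialization / vector-bundle formulation needed to apply the theorem globally, together with verifying the $L^1$ integrability $\log^+\|Df^{\pm1}\|\in L^1(\widetilde\mu)$ — both of which are routine given compactness of $M$ and $f\in C^1$ being a local diffeomorphism. Since the statement is attributed to \cite{qian2009smooth}, I would in fact just cite that reference for the full proof and present the above as the conceptual outline.
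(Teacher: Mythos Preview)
Your proposal is correct and, in fact, goes well beyond what the paper itself does: the paper does not prove this theorem at all but merely states it and attributes it to \cite{qian2009smooth}. Your outline --- lift to the natural extension so that the derivative becomes a genuine two-sided $GL(d,\R)$-cocycle over the homeomorphism $\widetilde f$, verify the integrability $\log^+\|Df^{\pm1}\|\in L^1(\widetilde\mu)$ from compactness and $C^1$ local-diffeomorphism, and then invoke the classical invertible multiplicative ergodic theorem --- is exactly the standard argument, and is essentially what one finds in the cited reference.
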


    The numbers $\{ \lambda_i(\tilde{x})\}_{i=1}^{r(\tilde{x})}$ given in the last theorem are called \textit{the Lyapunov exponents of} $(\widetilde{M},\tilde{f}, \tilde{\mu})$ at $\tilde{x}$, and $m_i(\tilde{x})$ is called the multiplicity of $\lambda_i(\tilde{x})$.

\begin{re}

The splitting provided by Oseledets' theorem is compatible with the dominated splitting. That is, given a dominated splitting, each subbundle is contained within an Oseledets subspace.

\end{re}

The integrated Lyapunov exponent $\lambda_i(\mu)$ is defined as follows:

\begin{equation*}
    \lambda_i(\tilde{\mu}) := \int \lambda_i(\tilde{x}) d \tilde{\mu}(\tilde{x}) 
\end{equation*}

\begin{re}
In the case that $\mu$ is ergodic, then $\lambda_i(\mu) = \lambda_i(x)$ for $\mu$- almost everywhere.
\end{re}

\subsection{Fake foliations}

We say that $f: M \to M$ a partially hyperbolic diffeomorphism is dynamical coherent if $E^{cs}$ and $E^{cu}$ are integrable, this property does not always hold even when the center direction is unidimensional, see for example \cite{RHRHU16}, but we have locally a fake foliation proved in \cite{BurnsWilkinson10}, and we will use these fake foliations in each path component of $\widetilde{M}$. 

\begin{re}

 Burns and Wilkinson developed the fake foliations to work locally in the partially hyperbolic setting. Fake foliations are a family of sets that are similar to the foliations of a neighborhood, in the sense that they are projections of the exponential map of sub spaces that belong in the same cone of the respective sub space (they are explained with more detail in the proposition below).

 From the previous sections, given a neighborhood  $B(x,\epsilon)$ of $x$ in $M$,  in general we are unable to use the results from diffeomorphism in $p^{-1}(B(x,\epsilon))$, because of the structure of $p^{-1}(x)$ ( a Cantor set) but if we restrict our analysis to a fiber, that we will call $\widetilde{B}(\widetilde{x}, \epsilon)$ such that is isomorphic to $p^{-1}(B(x,\epsilon)) \times \{ \widetilde{x} \}$, locally we can apply all the arguments known in each fiber.
\end{re}

\begin{prop}
\label{fakef}
    Let $f: M \to M$ be a $C^1$ local diffeomorphism and $\Lambda$ a compact $f$-invariant set with partially hyperbolic splitting
\begin{equation*}
    T_{\widetilde{\Lambda}}M = E^s \oplus E_1 \oplus \cdots \oplus E_k \oplus E^u.
\end{equation*}
Let $E^{cs,i}$ and $E^{cu,i}$ be as in equation (\ref{csu}) and consider \textcolor{blue}{an} extension\footnote{It is always possible to find an extension restricted to a cone field. Therefore, it is not unique.} $\overline{E}^{cs,i}$ and $\overline{E}^{cu,i}$ to a small neighborhood of $\widetilde{\Lambda}$.\\
Then for any $\epsilon>0$ there exist constants $R>r>r_1>0$ such that, for every $\widetilde{p} \in \widetilde{\Lambda}$, each fiber $\widetilde{B}(\widetilde{p},r)$ is foliated by foliations $\widehat{W}^u(\widehat{p}), \widehat{W}^s(\widetilde{p}), \widehat{W}^{cs,i}(\widetilde{p})$ and $\widehat{W}^{cu,i}(\widetilde{p})$, $i \in \{1,...,k \}$, such that for each $\beta \in {u,s,(cs,i),(cu,i)}$ the following properties hold:

\begin{enumerate}
    \item Almost tangency of the invariant distributions. For each $\widetilde{q} \in \widetilde{B}(\widetilde{p},r)$, the leaf $\widehat{W}_{\widetilde{p}}^{\beta}(\widetilde{q})$ is $C^1$, and the tangent space $T_{\widetilde{q}} \widehat{W}^{\beta}_{\widetilde{p}}(\widetilde{q})$ lies in a cone of radius $\epsilon$ about $E^{\beta}(q_0)$.
    \item Coherence. $\widehat{W}^s_p$ subfoliates $\widehat{W}^{cs,i}_{\widetilde{p}}$ and $\widehat{W}^u_p$ subfoliates $\widehat{W}^{cu,i}_{\widetilde{p}}$ for each $i \in \{1,...,k\}$.
    \item Local invariance. For each $\widetilde{q} \in \widetilde{B}(\widetilde{p},r_1)$ we have
    \begin{equation*}
        \widetilde{f}(\widehat{W}^{\beta}_{\widetilde{p}}(\widetilde{q},r_1)) \subset \widehat{W}^{\beta}_{\widetilde{f}(\widetilde{p})}(\widetilde{f}(\widetilde{q})) \hbox{ and } \widetilde{f}^{-1}(\widehat{W}^{\beta}_{\widetilde{p}}(\widetilde{q},r_1)) \subset \widehat{W}^{\beta}_{\widetilde{f}^{-1}(\widetilde{p})}(\widetilde{f}^{-1}(\widetilde{q})),
    \end{equation*}
     here $\widehat{W}^{\beta}_{\widetilde{p}}(\widetilde{q},r_1)$ is the connected component of $\widehat{W}^{\beta}_{\widetilde{p}}(\widetilde{q}) \cap \widetilde{B}(\widetilde{q},r_1)$ containing $\widetilde{q}$.
     \item Uniqueness. $\widehat{W}^s_{\widetilde{p}}(\widetilde{p})=\widetilde{W}^s_{\widetilde{p}}(\widetilde{p},r)$ and  $\widehat{W}^u_{\widetilde{p}}(\widetilde{p})=\widetilde{W}^u_{\widetilde{p}}(\widetilde{p},r)$.
\end{enumerate}

\end{prop}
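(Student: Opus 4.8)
The plan is to deduce Proposition~\ref{fakef} from the fake-foliation construction of Burns and Wilkinson \cite{BurnsWilkinson10}, carried out \emph{fiberwise} on the inverse limit $\widetilde{M}$. The point is that, even though $\widetilde{M}$ fails to be a manifold, $\widetilde{f}$ is a genuine homeomorphism and each fiber $\widetilde{B}(\widetilde{p},r)$ is identified, via the projection $p$, with a Euclidean ball inside $T_{p_0}M$; on such local pieces the partially hyperbolic splitting over $\widetilde{\Lambda}$ behaves exactly like that of a partially hyperbolic diffeomorphism (it is continuous along $\widetilde{\Lambda}$, with angles between the $E^{cs,i}$ and $E^{cu,i}$ uniformly bounded away from zero), so the diffeomorphism machinery applies verbatim once the dynamics has been transported to these charts.

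First I would set up the charts and the lifted dynamics. Fix $\widetilde{p}\in\widetilde{\Lambda}$ with coordinates $(p_n)_{n\in\Z}$. Since $f$ is a $C^1$ local diffeomorphism, each $F_n:=\exp_{p_{n+1}}^{-1}\circ f\circ\exp_{p_n}$ is a $C^1$ diffeomorphism between small balls about the origins of $T_{p_n}M$ and $T_{p_{n+1}}M$, with $D_0F_n=Df_{p_n}$; conjugating by a continuous family of linear isometries $I_n:T_{p_n}M\to\R^{\dim M}$ (available by compactness of $\widetilde{\Lambda}$) yields a two-sided sequence of maps $G_n$ of a fixed ball of $\R^{\dim M}$, each $C^1$-close on a sufficiently small ball to the linear isomorphism $A_n:=I_{n+1}\circ Df_{p_n}\circ I_n^{-1}$, and the $A_n$ preserve the trivialized dominated splitting with uniform constants. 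Under $p$ and $\exp_{p_0}$, the fiber $\widetilde{B}(\widetilde{p},r)$ is identified with that ball and $\widetilde{f}$ with $G_0$. I would then extend each $E^{\beta}$, $\beta\in\{u,s,(cs,i),(cu,i)\}$, to a neighborhood of $\widetilde{\Lambda}$, form the associated cone fields, and apply the Hadamard--Perron graph-transform theorem for sequences of maps (as in \cite{BurnsWilkinson10}): through every point of the ball pass locally invariant $C^1$ manifolds $W^s,W^u,W^{cs,i},W^{cu,i}$, each a graph over the corresponding subspace, tangent to the $\epsilon$-cone about $E^{\beta}$, and varying continuously with the point. Pulling these back by $I_0^{-1}$ and $\exp_{p_0}$ gives the families $\widehat{W}^{\beta}_{\widetilde{p}}$, and the four listed properties are read off from the graph transform: (1) is the cone estimate (shrink $r$ to make the cone $\epsilon$-narrow); (2) holds because $E^s\subset E^{cs,i}$ and $E^u\subset E^{cu,i}$, so the $W^s$- and $W^u$-leaves are produced as subfoliations of the $W^{cs,i}$- and $W^{cu,i}$-leaves; (3) is the statement that $G_n$ carries a small plaque into the plaque at the next basepoint, valid only on a smaller radius $r_1<r$ since the fake leaves are merely locally invariant; and (4) holds because the genuine stable and unstable sets of $\widetilde{p}$ for $\widetilde{f}|_{\widetilde{\Lambda}}$ exist by the classical invariant-manifold theorem on a partially hyperbolic set and are characterized by forward, respectively backward, contraction, hence coincide with the fake leaves through $\widetilde{p}$ of size $r$.

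The main obstacle, and the only place where anything beyond \cite{BurnsWilkinson10} is needed, is the interplay between the solenoidal structure of $\widetilde{M}$ and the non-invertibility of $f$. One must choose the charts so that $\widetilde{f}$ genuinely acts within a single fiber locally --- here one uses that $\widetilde{M}$ is locally a disc times a Cantor set with $p$ a homeomorphism on each disc, and that the homeomorphism $\widetilde{f}$ preserves path components --- and one must treat the backward dynamics with care: $f^{-1}$ is not defined on $M$, but $\widetilde{f}^{-1}$ is defined on $\widetilde{M}$, and the splitting over $\widetilde{\Lambda}$ is prescribed along full bi-infinite orbits, so the maps $F_n$ and the linear parts $A_n$ are defined and invertible for all $n\in\Z$. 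Once the problem has been transported to the inverse limit it is therefore genuinely a sequence-of-diffeomorphisms problem, and the unstable and center-unstable fake foliations come from running the graph transform for the inverse sequence $(G_n^{-1})$. The remaining points --- uniformity of the perturbation size over the compact set $\widetilde{\Lambda}$ and the choice of an adapted metric making $C=1$ --- are routine and identical to the diffeomorphism case.
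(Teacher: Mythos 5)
Your proposal is correct and follows essentially the same route as the paper: the paper's own justification (given in the remark following the proposition) is precisely to run the Burns--Wilkinson graph-transform construction along a fixed past orbit in the inverse limit, using the invertibility of $Df_{x_n}$ along that orbit and identifying each fiber of $p^{-1}(B(x,r))$ with $B(x,r)$ so that the diffeomorphism machinery applies fiberwise. Your version just spells out the chart/sequence-of-diffeomorphisms setup in more detail than the paper does.
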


\begin{re}
    The same arguments of Burns, Wilkinson works for locally diffeomorphism, but in this case we restrict our analysis to a fixed past, this is a preimage $(x_n)_n \in p^{-1}(x)$ of $x$ in the inverse limit, where $Df_{x_n}: E^i_{x_n} \to E^i_{x_{n+1}}$. Using this linear isomorphism, we can apply the graph transform as in \cite{qian2009smooth}, for unstable manifolds in the non uniformly hyperbolic endomorphism case, to obtain the fixed points as in Burns, Wilkinson argument. We make the constructions in $M$, but to obtain the similar behavior as in the diffeomorphism case we look at the fibers of $p^{-1}(B(x,r))$. In this case each fiber of $p^{-1}(B(x,r))$ will be foliated by these fake foliations, where each fiber is isomorphic to $B(x,r)$ \footnote{During the submission of this work, we became aware of a recent preprint \cite{hammerlindl2025ergodicity} by A.~Hammerlindl and A.~Taylor, in which the authors also develop a version of fake foliations for endomorphisms.}.
\end{re}

By choosing the neighborhood $V(\widetilde{\Lambda})$ of an $\widetilde{f}$-invariant set $\widetilde{\Lambda}$ above sufficiently small we have that $\Lambda_V$ also satisfies the hypotheses in Proposition \ref{fakef} and hence the points in $\widetilde{\Lambda}_V$ have fake foliations as in the proposition.

\begin{re}
\label{ffol2}
    For $\epsilon$ sufficiently small, the transversality of the invariant bundles for $\Lambda_V$ implies that, for all $\widetilde{p} \in \Lambda_V$ and every $\widetilde{x}$ and $\widetilde{y}$ sufficiently close to $\widetilde{\Lambda}_V$, then $\widehat{W}^{cs,i}_{\widetilde{p}}(\widetilde{x}) \cap \widehat{W}^{cu,i+1}_{\widetilde{p}}(\widetilde{y})$ consists of a single point for all $i \in \{0,...,k\}$. Here $\widehat{W}^{s}_{\widetilde{p}}(\widetilde{x})=\widehat{W}^{cs,0}(\widetilde{x})$ and $\widehat{W}^{u}_{\widetilde{p}}(\widetilde{y})=\widehat{W}^{cu,k+1}(\widetilde{y})$
\end{re}

\begin{figure}[h]
	\centering
	\includegraphics[scale=0.38]{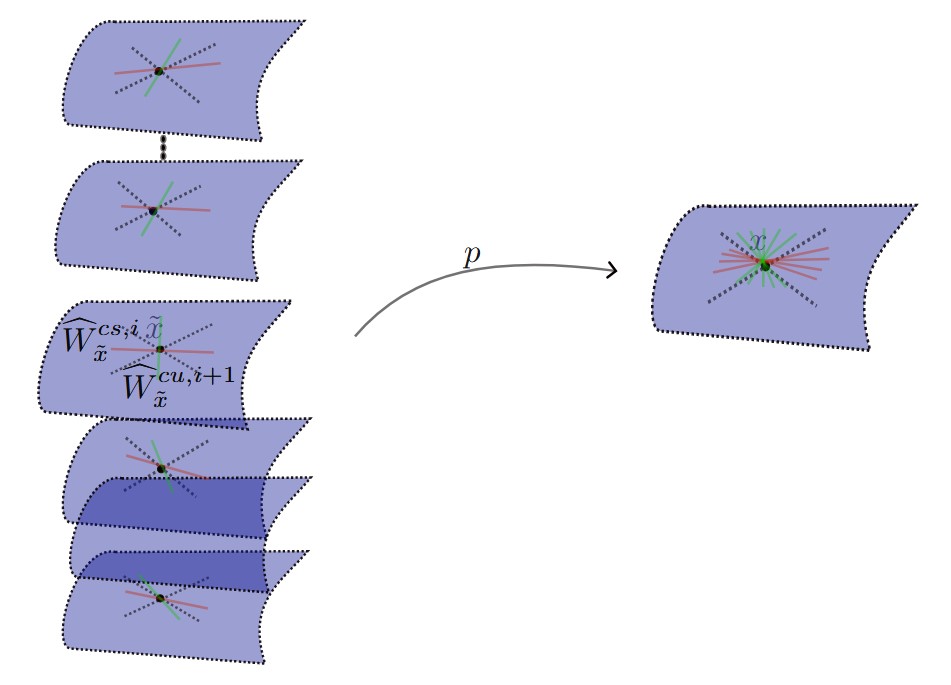}
	\caption{Fake foliations of $E^{cs,i}_{\widetilde{x}}$ and $E^{cu,i+1}_{\widetilde{x}}$}
    \label{fig:theorem1-3}
\end{figure}

\section{Proof of Theorem A}

\begin{theorem}
\label{theo_hexp1}
    Let $f: M \rightarrow M$ be a partially hyperbolic endomorphism with simple center direction $E^c$, then $\widetilde{f}$ is asymptotically $h$-expansive.
\end{theorem}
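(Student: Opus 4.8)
The plan is to show that the tail entropy $h^*(\widetilde f)$ vanishes by estimating, for each $\widetilde x\in\widetilde M$ and small $\epsilon>0$, the topological entropy of the dynamics restricted to the dynamical ball $\Gamma_\epsilon(\widetilde x)=\bigcap_{n\in\mathbb Z}\widetilde f^{-n}(\widetilde B(\widetilde f^n\widetilde x,\epsilon))$ inside one fibre. Since $\widetilde f$ is a homeomorphism, Bowen's identity $h^*_{\widetilde f}(\epsilon)=\sup_{\widetilde x}h(\widetilde f,\Gamma_\epsilon(\widetilde x))$ applies, so it suffices to bound $h(\widetilde f,\Gamma_\epsilon(\widetilde x))$ by a quantity that tends to $0$ as $\epsilon\to0$. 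First I would localize: choose a small neighbourhood $V$ of $\widetilde M$ on which the fake foliations of Proposition~\ref{fakef} exist, with the distributions $\overline E^s,\overline E^{cu,i},\overline E^u$ and fake leaves $\widehat W^s,\widehat W^{cs,i},\widehat W^{cu,i},\widehat W^u$ defined on each fibre $\widetilde B(\widetilde p,r)$; because of the uniform transversality (Remark~\ref{ffol2}), any point $\widetilde y\in\Gamma_\epsilon(\widetilde x)$ is obtained by successively intersecting fake leaves, so $\Gamma_\epsilon(\widetilde x)$ sits inside the fake-center leaf $\widehat W^{c}_{\widetilde x}(\widetilde x)$ up to a contribution from $\widehat W^s$ and $\widehat W^u$ that carries zero entropy (points that stay $\epsilon$-close for all forward time lie in $\widehat W^{cu}$, for all backward time in $\widehat W^{cs}$, hence staying close for all time forces them onto $\widehat W^{c}$; the stable/unstable ``error'' directions are genuinely contracted/expanded and contribute nothing by the standard Bowen argument).

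The core of the argument is then to bound the entropy of $\widetilde f$ restricted to the set of points that shadow $\widetilde x$ and lie on the (fake) center leaf. Here I would use the simplicity hypothesis: $E^c=E^1\oplus\cdots\oplus E^\ell$ with each $E^i$ one-dimensional and, crucially, each sub-sum $E^S=\bigoplus_{i\in S}E^i$ integrable to an $f$-invariant foliation $\mathcal F^S$ along the fibres, with $\mathcal F^S$ sub-foliating $\mathcal F^{S'}$ for $S\subset S'$. This gives a flag of invariant foliations inside the center, tangent to $E^{1}\subset E^{1}\oplus E^{2}\subset\cdots\subset E^{c}$. I would then run an inductive, curve-by-curve estimate à la Carrasco–Rodriguez-Hertz \cite{carrasco2021invariance}: at each level $i$ the quotient dynamics acts on the one-dimensional leaves of $\mathcal F^{\{1,\dots,i\}}$ inside $\mathcal F^{\{1,\dots,i-1\}}$, and the entropy of the shadowing set decomposes as a sum over the $\ell$ one-dimensional factors of the entropy of the return map to each such central curve. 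For a one-dimensional (fake) curve, the set of points staying $\epsilon$-close has entropy at most $\max\{0,\limsup_n\frac1n\log(\text{number of }\epsilon\text{-separated, }n\text{-shadowing points on the curve})\}$, and a distortion/length argument on the $C^1$ curve shows this is $o(1)$ as $\epsilon\to0$ — this is exactly the ``volume growth = entropy'' type bound on a one-dimensional unstable-like object, made legitimate here because the curve is uniformly $C^1$ (from the ``almost tangency'' clause of Proposition~\ref{fakef}) and the dynamics along it has bounded distortion in the $C^{1}$ (or $C^{1+\alpha}$, if needed) category.

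Assembling the pieces: $h(\widetilde f,\Gamma_\epsilon(\widetilde x))\le \sum_{i=1}^{\ell} g_i(\epsilon)$ where each $g_i(\epsilon)\to0$ as $\epsilon\to0$, uniformly in $\widetilde x$ by compactness of $\widetilde M$ and uniformity of the fake-foliation constants; hence $h^*_{\widetilde f}(\epsilon)\to0$, i.e.\ $\widetilde f$ is asymptotically $h$-expansive. Combined with Proposition~\ref{meas} (entropy is preserved by the natural extension) and Theorem~\ref{semicont}, this yields upper semicontinuity of $\mu\mapsto h_\mu(f)$ and thus the existence of equilibrium states claimed in Theorem~A. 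I expect the main obstacle to be the ``restriction to a fibre'' bookkeeping: one must carefully define the fake-center leaf of a point $\widetilde x$ in the solenoidal manifold, check that $\Gamma_\epsilon(\widetilde x)$ is contained in a single fibre $\widetilde B(\widetilde p,r)$ modulo the $\widehat W^{s}/\widehat W^{u}$ directions, and verify that the integrability of the $E^S$ from the simplicity hypothesis is genuinely compatible with (and refines) the fake center-flag coming from Proposition~\ref{fakef}, so that the inductive curve-by-curve entropy estimate can be carried out consistently across iterates.
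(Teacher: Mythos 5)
Your proposal follows essentially the same route as the paper: localize inside a fibre $\widetilde B(\widetilde x,\beta)$, use transversality/local product structure to trap the bi-infinite dynamical ball in a centre--stable object, discount the stable and unstable directions, and then use the invariant one-dimensional subfoliations supplied by simplicity together with the bounded-geometry (geometrical wiring) entropy result of \cite{carrasco2021invariance} to conclude that the centre carries zero entropy. Two details to adjust when writing it up: points that stay $\epsilon$-close for all \emph{forward} time lie in $\widehat W^{cs}$ (not $\widehat W^{cu}$), and vice versa for backward time; and the paper does not assert an additive decomposition of entropy over the flag of one-dimensional factors (which is not automatic) --- instead it keeps the whole disc $\widetilde D^{cs}_\alpha(\widetilde x)$, applies Theorem 4.1 of \cite{carrasco2021invariance} to the genuinely $f$-invariant centre leaf of bounded geometry, and removes the stable direction by an explicit $(n,\epsilon)$-spanning-set argument.
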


The proof is divided into three parts. First, we demonstrate that the dynamical balls are contained within $\widetilde{D}^{cs}_{\alpha}(\widetilde{x})$, a disc in the stable manifold $\widetilde{W}^s(\widetilde{x})$ foliated by center manifolds. Next, we employ the arguments from \cite{carrasco2021invariance} to show that the entropy along each center manifold is zero. Finally, we conclude that the entire disc $\widetilde{D}^{cs}_{\alpha}(\widetilde{x})$ has zero entropy, which concludes the proof.

The first part of the proof follows a similar strategy to that in \cite{alvarez2022existence}. However, in our setting, we make use of the fact that the center bundle $E^c$ is simple, which ensures its integrability at every point $\widetilde{x}$.

For any closed manifold $M$, their extension on the inverse limit is a fiber bundle $(\widetilde{M},M,p,C)$, where $C$ is a cantor set, see \cite{aoki1994topological} for further discussion. Choose $\beta > 0$ sufficient small such that $p^{-1}(B(x, \beta )) \simeq B(x, \beta ) \times p^{-1}(x) $. The symbol $\simeq$ means that the sets are homeomorphic. There exist $\delta \in (0, \beta)$ such that for any $x \in M$, $\widetilde{B}(\widetilde{x},\beta) \simeq B(x,\beta) \times \{ \widetilde{x}\}$, the connected component of $p^{-1}(B(x,\beta))$ which contains $\widetilde{x}$. If $\widetilde{y} \in \widetilde{B}(\widetilde{x},\beta)$  such that $\tilde{d}(\widetilde{x},\widetilde{y})< \delta$, then:

\begin{itemize}
    \item $\widetilde{W}^s_{\beta}(\widetilde{x}) \cap \widetilde{W}^c_{\beta}(\widetilde{y})$ is a unique point.
    \item $\widetilde{W}^u_{\beta}(\widetilde{x}) \pitchfork  \widetilde{D}^{cs}_{\beta}(\widetilde{y})$ is a unique point, where
$$\widetilde{D}^{cs}(\widetilde{y}) = \bigcup_{w \in \widetilde{W}^c_{\delta}(\widetilde{y})}\widetilde{W}^s(w)$$

  is a disc tangent to $E^c \oplus E^s$.
\end{itemize}

This follows by the continuity of the foliations. Let $\alpha>0$ such that $\lambda \cdot \alpha < \delta < \beta$, where $\lambda= \max_{x \in M} ||Df_x|_{E^u}||$. Now, we can define $V(\widetilde{x}) \subset B(x, \beta)$, with diameter $\alpha$.

\begin{equation*}
    V(\widetilde{x}) = \bigcup_{w \in W^{cs}_{\alpha}(\widetilde{x})} p ( \widetilde{W}^u_{\alpha}(w))
\end{equation*}

We can lift $V(\widetilde{x})$ to $\widetilde{V}(\widetilde{x}) := p^{-1}(V(\widetilde{x})) \cap \widetilde{B}(\widetilde{x}, \beta)$ to the fiber in $\widetilde{x}$. 
There exists $\epsilon_0 > 0$ such that for any $0 < \epsilon \leq \epsilon_0$, we have:
\[
B(\widetilde{x}, \epsilon) \cap \widetilde{B}(\widetilde{x}, \beta) \subset \widetilde{V}(\widetilde{x}).
\]
This follows from the fact that $\widetilde{V}(\widetilde{x})$ is uniformly bounded, with $\alpha$ independent of $x$. Here, $B(\widetilde{x}, \epsilon)$ denotes the ball centered at $\widetilde{x}$ in the inverse limit space. For simplicity of notation we write $\widetilde{x}^n$ instead of $\widetilde{f}^n(\widetilde{x})$. In order to get  $h(f,\Gamma_{\epsilon}(\widetilde{x}))=0$ it suffices to prove that $h(f, \widetilde{D}^{cs}_{\alpha}(\widetilde{x})) =0$. Since $\Gamma_{\epsilon}(\widetilde{x}) \subset \widetilde{D}^{cs}_{\alpha}(\widetilde{x})$, by the following lemma.

\begin{lemma}
\label{lemma_inc}
    $\Gamma_{\epsilon} (\widetilde{x})  \subset \widetilde{D}^{cs}_{\alpha}(\widetilde{x})$.
\end{lemma}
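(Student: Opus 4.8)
The plan is to show that any point $\widetilde{y}\in\Gamma_\epsilon(\widetilde{x})$ lies on the $cs$-disc through $\widetilde{x}$, by splitting the argument into a "stable/center" part and an "unstable" part and using that $\widetilde{f}$ is invertible on $\widetilde{M}$. Recall $\Gamma_\epsilon(\widetilde{x})=\{\widetilde{y}:\widetilde{d}(\widetilde{f}^n\widetilde{x},\widetilde{f}^n\widetilde{y})<\epsilon\ \forall n\in\Z\}$, and that for $\epsilon\le\epsilon_0$ we have $B(\widetilde{x}^n,\epsilon)\cap\widetilde{B}(\widetilde{x}^n,\beta)\subset\widetilde{V}(\widetilde{x}^n)$ for every $n$, so the whole orbit of $\widetilde{y}$ stays in the local product neighborhoods where the LPS holds. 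First I would use the LPS at each $\widetilde{x}^n$: since $\widetilde{y}\in\widetilde{V}(\widetilde{x})$ and is $\delta$-close to $\widetilde{x}$ (shrinking $\epsilon_0$ if necessary so that $\epsilon$-closeness in $\widetilde{d}$ forces $\delta$-closeness), we can write $\{\widetilde{z}\}=\widetilde{W}^u_\beta(\widetilde{y})\pitchfork\widetilde{D}^{cs}_\beta(\widetilde{x})$, so that $\widetilde{y}\in\widetilde{W}^u_{loc}(\widetilde{z})$ with $\widetilde{z}\in\widetilde{D}^{cs}_\alpha(\widetilde{x})$. It then suffices to prove $\widetilde{z}=\widetilde{y}$, equivalently $\widetilde{y}\in\widetilde{W}^s(\widetilde{z})\cdot$(no unstable component), which I would do by a contraction-in-backward-time argument.

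The key step is the backward iteration. Because $\widetilde{y}$ and $\widetilde{x}$ stay $\epsilon$-close for all $n\le 0$, and by local invariance of the (fake) unstable foliation (Proposition~\ref{fakef}(3)) together with the choice $\lambda\cdot\alpha<\delta<\beta$ where $\lambda=\max_x\|Df_x|_{E^u}\|$, the point $\widetilde{z}_n:=\widetilde{f}^{-n}(\widetilde{z})$ remains the $cs$-coordinate of $\widetilde{f}^{-n}(\widetilde{y})$ relative to $\widetilde{f}^{-n}(\widetilde{x})$, and the unstable "distance" $d^u(\widetilde{f}^{-n}\widetilde{y},\widetilde{z}_n)$ along $\widehat{W}^u$ contracts: applying $\widetilde{f}^{-1}$ expands stable and center directions but \emph{contracts} $E^u$ by a factor $<1$ (the definition of partially hyperbolic gives $\|Df|_{E^u_{x_i}}\|>1$, hence $\|Df^{-1}|_{E^u}\|<1$ uniformly). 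So $d^u(\widetilde{f}^{-n}\widetilde{y},\widetilde{z}_n)\to 0$. On the other hand, if $\widetilde{y}\ne\widetilde{z}$ this unstable distance is positive at $n=0$ and, reading the iteration \emph{forward} from any $\widetilde{f}^{-n}$, it would grow like $\lambda^{-n}$-type expansion — but more to the point, backward contraction to $0$ combined with the fact that $\widetilde{f}^{-n}\widetilde{y}$ and $\widetilde{f}^{-n}\widetilde{x}$ stay within $\epsilon$ (so within a single fake-foliation chart) forces the unstable holonomy coordinate to be exactly $0$ for all $n$, i.e. $\widetilde{y}=\widetilde{z}\in\widetilde{D}^{cs}_\alpha(\widetilde{x})$. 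I would phrase this cleanly using the fake unstable foliation $\widehat{W}^u$ restricted to each fiber $\widetilde{B}(\widetilde{x}^n,r)$, where the uniqueness property (Proposition~\ref{fakef}(4)) identifies $\widehat{W}^u$ with the genuine $\widetilde{W}^u$ and gives the needed size control.

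The main obstacle I expect is bookkeeping the passage between $M$, the inverse limit $\widetilde{M}$, and the fibers $\widetilde{B}(\widetilde{x}^n,\beta)\simeq B(x_n,\beta)\times\{\widetilde{x}^n\}$: one must check that the local product structure and the fake foliations are compatible with $\widetilde{f}$ and $\widetilde{f}^{-1}$ at every point of the orbit simultaneously, and that the metric $\widetilde{d}$ (a weighted sum over all coordinates) controls, and is controlled by, the "geometric" distances along the foliations inside a single fiber — this is where shrinking $\epsilon_0$ and using uniform bounds ($\alpha$ independent of $x$, uniform transversality angles from Remark~\ref{ffol2}) does the work. A secondary point is that $E^u$ depends on the chosen past $\widetilde{x}$, so the construction of $\widetilde{W}^u_\beta(\widetilde{y})$ must be done along the specific preimage sequence $\widetilde{y}$, which is exactly why we work in $\widetilde{M}$ rather than $M$; once that is set up, the backward graph-transform argument of \cite{qian2009smooth} referenced after Proposition~\ref{fakef} gives existence and local invariance of these unstable leaves and the contraction estimate is then routine.
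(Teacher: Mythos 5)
Your setup is the same as the paper's: project $\widetilde{y}\in\Gamma_{\epsilon}(\widetilde{x})$ along its local unstable leaf onto the disc $\widetilde{D}^{cs}_{\alpha}(\widetilde{x})$ using the local product structure, and reduce the lemma to showing that the intersection point $\widetilde{z}$ equals $\widetilde{y}$. The gap is in the mechanism you use for that last step. Backward contraction of the unstable separation proves nothing: \emph{every} point of $\widetilde{W}^u_{loc}(\widetilde{z})$, whether or not it lies on the $cs$-disc, has $d^u(\widetilde{f}^{-n}\widetilde{y},\widetilde{f}^{-n}\widetilde{z})\to 0$, since backward asymptotic closeness is exactly what characterizes membership in the unstable leaf. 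So the assertion that ``backward contraction to $0$ \dots forces the unstable holonomy coordinate to be exactly $0$'' is a non sequitur: the coordinate tends to $0$ along the past without being forced to vanish at time $0$ (in symbols, $t_{-n}\leq\sigma^{-n}t_0$ with $\sigma>1$ is perfectly consistent with $t_0>0$). Note also that your key paragraph invokes only the closeness of the orbits for $n\leq 0$, which is the wrong half of the definition of $\Gamma_{\epsilon}(\widetilde{x})$ for this inclusion: backward closeness alone constrains the \emph{stable} component, not the unstable one. (A smaller inaccuracy: $\widetilde{f}^{-1}$ need not expand the center direction; partial hyperbolicity only gives $\|Df|_{E^s}\|<1<\|Df|_{E^u}\|$.)

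The argument that works, and is the one in the paper, runs \emph{forward}: if $\widetilde{y}\neq\widetilde{z}$, then since $\widetilde{y}\in\widetilde{W}^u_{loc}(\widetilde{z})$ and $\|Df|_{E^u}\|>1$ uniformly, the unstable separation between $\widetilde{f}^m(\widetilde{y})$ and $\widetilde{f}^m(\widetilde{z})$ grows exponentially in $m\geq 0$ (using local invariance of the unstable leaves while the iterates remain in the product neighborhoods), so for some $m$ it exceeds $\alpha$; but every point of $\widetilde{V}(\widetilde{x}^m)$ lies within unstable distance $\alpha$ of $\widetilde{D}^{cs}_{\alpha}(\widetilde{x}^m)$ by construction, so $\widetilde{f}^m(\widetilde{y})\notin\widetilde{V}(\widetilde{x}^m)$, contradicting $\widetilde{d}(\widetilde{f}^m(\widetilde{x}),\widetilde{f}^m(\widetilde{y}))<\epsilon$ for all $m\geq 0$. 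You mention this forward growth in passing (``it would grow like $\lambda^{-n}$-type expansion'') but then set it aside in favor of the backward discussion; that forward sentence, made precise, is the whole proof, and the backward-contraction reasoning should be removed.
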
 

\begin{proof}
    Let $\widetilde{z} \in \Gamma_{\epsilon}$, where $\widetilde{z}^n  \in \widetilde{B}(\widetilde{x}^n, \epsilon)$, then $\widetilde{z}^n \in \widetilde{V}(\widetilde{x}^n)$. 
Consider the projections: 

\begin{equation*}
    \begin{aligned}
        p^{cs}&:& \widetilde{V}(\widetilde{x}) & \rightarrow & \widetilde{D}^{cs}_{\alpha}(\widetilde{x})\\
        & & \widetilde{z} & \rightarrow & \widetilde{W}^{u}_{\alpha}(\widetilde{z}) \cap \widetilde{D}^{cs}_{\alpha}(\widetilde{x})
    \end{aligned}
\end{equation*}

and

\begin{equation*}
    \begin{aligned}
        p^{c}&:& \widetilde{D}^{cs}(\widetilde{x}) & \rightarrow & \widetilde{W}^{c}_{\alpha}(\widetilde{x})\\
        & & \widetilde{y} & \rightarrow & \widetilde{W}^{s}_{\alpha}(\widetilde{y}) \cap \widetilde{W}^{c}_{\alpha}(\widetilde{x})
    \end{aligned}
\end{equation*}
Set $\widetilde{y}^n := p^{cs}(\widetilde{z}^n)$ and $\widetilde{w}^n := p^c(\widetilde{y}^n)$.

If $\widetilde{z}^n \not \in \widetilde{D}^{cs}(\widetilde{x}^n)$ for some $n \in \mathbb{N}$, then  $\widetilde{z}^n \neq \widetilde{y}^n$. Since $\widetilde{y}^n \in \widetilde{W}^u(\widetilde{x}^n)$,there exist $m \in \mathbb{N}$  such that  $d(\widetilde{y}^{n+m}, \widetilde{z}^{n+m}) > \alpha$. This implies $\widetilde{z}^{n+m} \not \in \widetilde{V}(\widetilde{x}^{n+m})$, which is a contradiction. 
\end{proof}
The following definitions are motivated by \cite{carrasco2021invariance}.

\begin{definition}
    \textcolor{blue}{An} $\ell$-dimensional compact set $G \subset \widetilde{M}$ has \textit{geometrical wiring} if there exist $1$-dimensional foliations $W^1,...,W^{\ell}$ by compact leaves and $\widetilde{x} \in G$ such that

\begin{equation*}
    D_1= W^1(\widetilde{x}), \:\; D_{i+1}= \bigcup_{\widetilde{y} \in D_i}W^i(\widetilde{y}), 1\leq i \leq \ell-1.
\end{equation*}

It holds
\begin{enumerate}
    \item $D_{\ell}=G$;
    \item for every i, if $y \in D_i$, then $W^{i+1}(y) \cap D_i = \{ y \}$.
\end{enumerate}
    
\end{definition}

The second condition naturally defines a projection along $D_i$, $\pi^G_i: D_{i+1} \to D_i$. Assuming that $\widetilde{f}^n(G)$ is a geometrical wiring for every $n \geq 0$, we say that $W^1_{\widetilde{f}^n(G)},...,W^{\ell}_{\widetilde{f}^n(G)}$ are $\widetilde{f}$-invariant if 

\begin{equation*}
    \widetilde{f}(W^i_{\widetilde{f}^n(G)}(\widetilde{x})) =W^i_{\widetilde{f}^{n+1}(G)} (\widetilde{f}(\widetilde{x})) , \:\; \hbox{ for every } \widetilde{x} \in G, i=1,...,\ell, n \in \mathbb{N}.  
\end{equation*}

This induces well defined $\widetilde{f}$-invariant foliations on 

\begin{equation*}
    G_{\infty}:= \bigcup_{n \in \mathbb{N}} \widetilde{f}^n (G). 
\end{equation*}

\begin{definition}
We say that $G$ has \textit{Bounded geometry} with respect to $\widetilde{f}$ if 

\begin{enumerate}[label={\bfseries [BG\arabic*]}]
\item\label{BG1} for every $n$, the set $\widetilde{f}^n(G)$ is a geometrical wiring, and the corresponding foliations $W^1_{\widetilde{f}^n(G)},..., W^{\ell}_{\widetilde{f}^n(G)}$ are invariant;
\item\label{BG2} The foliations $W^1, ... , W^{\ell}$ are uniformly continuous;
\item\label{BG3} The family $\{ \widetilde{f}: W^i(\widetilde{y}) \to W^i(\widetilde{f}(\widetilde{y})): \widetilde{y} \in G_{\infty} \}$ is uniformly Lipschitz;
\item \label{BD4} for every $\widetilde{x} \in G_{\infty}$, there exists a parametrization $h^i_x: [0,1] \to W^i(\widetilde{x})$ such that the family

\begin{equation*}
    \mathcal{P} := \{ h^i_x : [0,1] \to W^i(\widetilde{x}) : 1 \leq i \leq \ell, \widetilde{x} \in G_{\infty} \}.
\end{equation*}

is uniformly bi-Lipschitz.
\end{enumerate}

\end{definition}


Define $\Gamma^c := \cap_{n=1}^{\infty} \widetilde{f}^{-n}(\widetilde{W}^c_{\alpha}(\widetilde{x}^n))$ as the set of points in $\widetilde{W}^c_{\alpha}(\widetilde{x}^n)$  for which the $n$-iterate are in $\widetilde{W}^c_{\alpha}(\widetilde{x}^n)$. By the geometry of $\widetilde{W}^c_{\alpha}(\widetilde{x})$ (since $E^c$ is simple), it is subfoliated. Therefore $\Gamma^c$ has bounded geometry, since we are taking a local manifold.


\begin{lemma}
    $h(\widetilde{f}, \Gamma^c)=0$.
\end{lemma} 

\begin{re}
\label{onedimc}
    The Lemma above for the one dimensional case is obvious since every  iterate of an arc $\gamma \subset \widetilde{W}_{\alpha}^i(\tilde{x})$ inside $\Gamma^c$ is uniformly bounded. Then
\end{re}


\begin{equation}
\label{ent0}
    h(\widetilde{f}, \gamma)=0,
\end{equation}

for any $\gamma \subset \widetilde{W}_{\alpha}^i(\widetilde{x})$ inside $\Gamma^c$.
\begin{proof}
    Since \textcolor{blue}{$\Gamma^c$} has bounded geometry and $\widetilde{f}$ is a homeomorphism we can use Theorem 4.1 from \cite{carrasco2021invariance} in each $\widetilde{B}(\widetilde{x},\alpha)$, which proves the lemma.
\end{proof}

\begin{re}
    In the paper of P. Carrasco et. al. \cite{carrasco2021invariance} they use the strongly simple property to obtain a good geometric behavior (bounded geometry) of the fiber $h^{-1}(x)$, where $h$ is the semiconjugation of the Derived of Anosov. This behavior is used to prove that $h^{-1}(x)$ generates no entropy. In our case \textcolor{blue}{$\Gamma^c$} has bounded geometry, therefore we can use that result.
\end{re}

\begin{lemma}
    $h(\tilde{f}, \widetilde{D}_{\alpha}^{cs})=0$ .
\end{lemma} 

\begin{proof}
    For this we just need to prove $h(\widetilde{f}, \widetilde{D}_{\alpha}^{cs}(\widetilde{x})) = h(\widetilde{f}, \Gamma^c)$. Let $S$ be an $(n,\epsilon/2)$-spanning set for $\Gamma^c$, is also an $(n,\epsilon)$-spanning set for 
    $$B(\Gamma^c, \epsilon/2) := \{ \widetilde{x} \in M_f : \exists \:\widetilde{y} \in \Gamma^c , d(\widetilde{x}, \widetilde{y}) < \epsilon /2 \}.$$  
For $n \in \mathbb{N}$ sufficiently large, $\widetilde{f}^n(\widetilde{D}^{cs}_{\alpha}(x)) \subset B(\widetilde{f}^n(\Gamma^c), \epsilon/2)$, thus $S$ will be an $(n,\epsilon)$-spanning set for $\widetilde{D}^{cs}_{\alpha}(x)$.

Therefore,
\begin{eqnarray*}
h(\widetilde{f}, \Gamma^c) & = & \lim_{\epsilon \rightarrow 0} \limsup_{n 
\rightarrow \infty } \dfrac{1}{n} \log \min \{ \# S : S \subseteq \Gamma^c \hbox{ is an } (n, \epsilon)-\hbox{spanning set} \} \\
 & \geq & \lim_{\epsilon \rightarrow 0} \limsup_{n 
\rightarrow \infty } \dfrac{1}{n} \log \min \{ \# S : S \subseteq \widetilde{D}^{cs}_{\alpha}(x) \hbox{ is an } (n, \epsilon /2)-\hbox{spanning set} \} \\
 & \geq & h(\widetilde{f},\widetilde{D}^{cs}_{\alpha}(x)).
\end{eqnarray*}

The equality holds since $\Gamma^c \subset \widetilde{D}^{cs}_{\alpha}(x)$. Additionally, by the preceding lemma, we know that $h(\widetilde{f}, \Gamma^c) = 0$, which completes the proof of this lemma.
   
\end{proof}

\begin{proof}[Proof of Theorem \ref{theo_hexp1}]

By Lemma \ref{lemma_inc}, we know that $\Gamma_{\epsilon}(\widetilde{x}) \subset \widetilde{D}^{cs}_{\alpha}(\widetilde{x})$. Combining this with the previous lemma, we obtain:
\[
0 = h(\widetilde{f}, \Gamma^c) = h(\widetilde{f}, \widetilde{D}^{cs}_{\alpha}(\widetilde{x})) \geq h(\widetilde{f}, \Gamma_{\epsilon}(\widetilde{x})).
\]

which implies that $\widetilde{f}$ is asymptotically $h$-expansive.
\end{proof}

\begin{teoa}
\label{teooa}
   Let $f: M \rightarrow M$ be a absolutely partially hyperbolic endomorphism with simple center bundle, then there exist an equilibrium state for $(f, \phi)$ for any continuous potential $\phi: X \to \R$.  
\end{teoa}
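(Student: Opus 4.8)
The plan is to deduce Theorem A directly from Theorem \ref{theo_hexp1} together with the general machinery of the inverse limit set up in Section~2. First I would invoke Theorem \ref{theo_hexp1} to conclude that $\widetilde{f}: \widetilde{M} \to \widetilde{M}$ is asymptotically $h$-expansive. By Theorem \ref{semicont}, this implies that the entropy function $\widetilde{\mu} \mapsto h_{\widetilde{\mu}}(\widetilde{f})$ is upper semi-continuous on the (weak-$*$ compact) space $\mathcal{M}(\widetilde{f})$ of $\widetilde{f}$-invariant Borel probability measures.

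Next I would transfer this back to $f$. Given the continuous potential $\phi: M \to \R$, set $\widetilde{\phi} := \phi \circ p : \widetilde{M} \to \R$, which is continuous since $p$ is continuous. By Proposition \ref{bijm} the push-forward $p_*$ is a bijection between $\mathcal{M}(\widetilde{f})$ and $\mathcal{M}(f)$, and by Proposition \ref{meas} it preserves entropy: $h_{\widetilde{\mu}}(\widetilde{f}) = h_{p_*\widetilde{\mu}}(f)$. Moreover $\int \widetilde{\phi}\, d\widetilde{\mu} = \int \phi\, d(p_*\widetilde{\mu})$ by definition of push-forward. Hence the functional $\widetilde{\mu} \mapsto h_{\widetilde{\mu}}(\widetilde{f}) + \int \widetilde{\phi}\, d\widetilde{\mu}$ equals $P(f,\phi)$ in supremum (using the Variational Principle, Theorem \ref{varprinc}, and the fact that topological pressure is preserved under the inverse limit, which follows since $p$ is a semiconjugacy and the variational principle holds for both systems). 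Since $\widetilde{\mu} \mapsto h_{\widetilde{\mu}}(\widetilde{f})$ is upper semi-continuous and $\widetilde{\mu} \mapsto \int \widetilde{\phi}\, d\widetilde{\mu}$ is continuous in the weak-$*$ topology, the sum is upper semi-continuous on the compact set $\mathcal{M}(\widetilde{f})$, so it attains its supremum at some $\widetilde{\mu}_0$. Then $\mu_0 := p_*\widetilde{\mu}_0$ satisfies $h_{\mu_0}(f) + \int \phi\, d\mu_0 = h_{\widetilde{\mu}_0}(\widetilde{f}) + \int \widetilde{\phi}\, d\widetilde{\mu}_0 = P(f,\phi)$, i.e. $\mu_0$ is an equilibrium state for $(f,\phi)$.

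The only genuinely delicate point is the passage between the topological pressures $P(\widetilde{f}, \widetilde{\phi})$ and $P(f,\phi)$, and between the two variational suprema; but this is routine given that $p$ is a continuous surjective semiconjugacy with $p_*$ surjective on invariant measures (combine Propositions \ref{meas} and \ref{bijm} with Theorem \ref{varprinc}), so no new work is needed. I expect no substantive obstacle here: the entire weight of Theorem~A rests on Theorem \ref{theo_hexp1}, which has already been established. One should, however, remark that the theorem as stated for ``absolutely partially hyperbolic endomorphism with simple center bundle'' uses the hypothesis only through Theorem \ref{theo_hexp1}, whose proof invokes the simplicity of $E^c$ to obtain the bounded-geometry subfoliation of $\widetilde{W}^c_\alpha(\widetilde{x})$ and the local product structure; the absolute partial hyperbolicity is what guarantees the uniform constants $\beta, \delta, \alpha$ and the uniform cone estimates used in Lemma \ref{lemma_inc}.
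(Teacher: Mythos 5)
Your proposal is correct and follows essentially the same route as the paper: upper semi-continuity of $\widetilde{\mu} \mapsto h_{\widetilde{\mu}}(\widetilde{f})$ via Theorem \ref{theo_hexp1} and Theorem \ref{semicont}, lifting the potential by $\widetilde{\phi} = \phi \circ p$, attaining the supremum upstairs, and projecting the maximizing measure down via Propositions \ref{meas} and \ref{bijm}. Your extra care with the identification of $P(\widetilde{f},\widetilde{\phi})$ and $P(f,\phi)$ only makes explicit what the paper leaves implicit.
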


\begin{proof}
    From the previous theorem we have that $h_{\widetilde{\mu}}(\widetilde{f})$ is upper semi continuous by Theorem \ref{semicont}. Furthermore,  setting  $ \widetilde{\phi} := \phi \circ p $  where $p : \widetilde{X} \to X$ is the natural projection,
    
\begin{equation*}
    \widetilde{\mu} \mapsto h_{\widetilde{\mu}}(\widetilde{f}) + \int_X \widetilde{\phi} \, d\widetilde{\mu}
\end{equation*}
is upper semi continuous.    
Then it attains the maximum, an equilibrium state, $\widetilde{\mu}$ for $(\widetilde{f}, \widetilde{\phi})$. Since by Proposition \ref{bijm} there is an injective correspondence between $\mathcal{M}_{\widetilde{f}}(\widetilde{X})$ and $\mathcal{M}_f(X)$, then, by Proposition \ref{meas} the entropy function $\nu \to h_{\nu}(f)$ is also upper semi continuous, then there exist an equilibrium state for $(f,\phi)$. 
\end{proof}


\section{Proof of Theorem B}
In this case, we do not require the integrability of the center bundle, since the domination of the splitting will ensure that $\Gamma_{\delta}(\widetilde{x})$ lies within a one-dimensional leaf tangent to the one-dimensional center direction.

To prove Theorem B, we follow an argument similar to that in \cite{Lorenzo2012entropy}, where they prove that $f$ is $h$-expansive. This property follows as a consequence of the fact that the dynamical ball will be inside a center curve. For this, we need to ensure that, for each center curve $\widetilde{\gamma}_i(\widetilde{x})$, the fake foliations $\widehat{W}^{cs,i-1}$ and $\widehat{W}^{cu,i+1}$ exhibit contracting and expanding behavior, respectively, along $\widetilde{\gamma}_i$. 

To achieve this, we apply a Pliss Lemma argument within each fiber $\widetilde{B}(\widetilde{x}, r)$. This is justified by the definition of the tangent bundle $T\widetilde{M}$, which guarantees good behavior of the invariant subbundles along the past orbit of $\widetilde{x}$. Once this structure is established, the argument from \cite{Lorenzo2012entropy} applies directly to $\widetilde{f}$. 

In particular, it follows that the dynamical ball $\Gamma_{\delta}(\widetilde{x})$ is contained in the center directions of the fake foliations $\widehat{W}^{cu,0}_{\widetilde{x}}(\widetilde{x}) \cap \widehat{W}^{cs,k}_{\widetilde{x}}(\widetilde{x})$, and any point $\widetilde{y}$ in the dynamical ball must lie on the same center curve, due to the domination.

Having outlined the main ideas, we now give the formal proof.

From the construction of the tangent bundle $T \widetilde{M}$, we have a good behavior of the subspaces along the past for $\widetilde{M}$. 
We will use the arguments of \cite{Lorenzo2012entropy}.

\begin{re}{(Choice of constants)} We fix some constants:
\label{chconst}
\end{re}

\begin{enumerate}

    \item Fix $\tau>0$ such that $(1+\tau) \sqrt{\lambda} < 1$, where $\lambda < 1$ is the domination constant in (\ref{ds}).
    
    \item Fix $\nu >0$ sufficiently small such that if $\widetilde{y}, \widetilde{y}' \in \widetilde{B}_{5 \nu} (\widetilde{x})$ for some $\widetilde{x} \in \widetilde{M}$, then for all $i \in \{ 0,...,k \}$ it holds

\begin{equation*}
    1-\tau < \dfrac{||Df^{-1}|_{\bar{E}^{cu,i}(y_n)}||}{||Df^{-1}|_{\bar{E}^{cu,i}(y'_n)}||} < 1 + \tau \: \hbox{ and } \; 1-\tau < \dfrac{||Df|_{\bar{E}^{cs,i}(y_n)}||}{||Df|_{\bar{E}^{cs,i}(y'_n)}||} < 1 + \tau.
\end{equation*}    
\end{enumerate}

One of the key ingredients leading to the fact that $\Gamma_{\delta}(\widetilde{x})$ lies in a one-dimensional leaf is that the dominated splitting guarantees the existence of hyperbolic times. To establish this, we employ a reformulation of the Pliss Lemma.

\begin{lemma}{\cite{Pliss72}}
\label{thip1}
    Let $\lambda >0$ be the constant of domination and $0< \lambda < \lambda_1 < \lambda_2 < 1$. Assume that $\widetilde{x}=(x_n)_n$, $i \in {0,..,k}$ and there exist $ n' \geq 0$ such that

\begin{equation}
    \prod_{m=0}^{n'} ||Df |_{E^{cs,i}(x_m)}|| \leq \lambda_1^{n'}
\end{equation}

Then there are $N=N(\lambda_1, \lambda_2, f) \in \N$ and a constant $c= c(\lambda_1, \lambda_2 , f)>0$ such that for every $n \geq N$ there exist $\ell \geq cn$ and numbers (hyperbolic times)

\begin{equation*}
    0 < n_1 < n_2 < \cdots < n_{\ell} < n
\end{equation*}
such that 

\begin{equation*}
    \prod_{m=n_r}^h ||Df|_{E^{cs,i}(x_m)}|| \leq \lambda_2^{h-n_r},
\end{equation*}
for all $r=1,2,..., \ell$ and all $h$ with $n_r \leq h \leq n$. Similar assertions folds for the map $\widetilde{f}^{-1}$ and the bundles $E^{cu,i}$.
\end{lemma}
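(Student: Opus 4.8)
The plan is to deduce the lemma from the classical Pliss lemma \cite{Pliss72} by passing to logarithms, so that the multiplicative cocycle $m\mapsto\|Df|_{E^{cs,i}(x_m)}\|$ is replaced by the additive one $a_m:=\log\|Df|_{E^{cs,i}(x_m)}\|$. The first step is to record the uniform bounds that make this work. Since $M$ is compact and $f$ is a $C^1$ local diffeomorphism, and since on the natural extension $\widetilde f$ is a homeomorphism with $D\widetilde f$ an isomorphism of $T\widetilde M=p^\ast TM$ preserving the lifted dominated splitting, both $\|Df\|$ and $\|Df^{-1}\|$ are bounded along every orbit by a constant depending only on $f$. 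Hence there is $A=A(f)>0$, which we enlarge so that $A\ge -\log\lambda_2$, with $|a_m|\le A$ for every $m\in\Z$, every $\widetilde x=(x_n)_n\in\widetilde M$ and every choice of the subbundle $E^{cs,i}$. Writing $n:=n'$, the hypothesis $\prod_{m=0}^{n}\|Df|_{E^{cs,i}(x_m)}\|\le\lambda_1^{n}$ becomes $\sum_{m=0}^{n}a_m\le n\log\lambda_1$, so $\frac{1}{n+1}\sum_{m=0}^{n}a_m\le\frac{n}{n+1}\log\lambda_1\le\log\lambda_1'$ for any fixed $\lambda_1'\in(\lambda_1,\lambda_2)$, provided $n\ge N_1(\lambda_1,\lambda_1')$; this absorbs the harmless off-by-one between the $n+1$ factors in the product and the exponent $n$ on the right. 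The domination constant $\lambda$ itself plays no role beyond the ordering $0<\lambda<\lambda_1<\lambda_2<1$; it enters only when the lemma is applied, to guarantee that such a time $n'$ exists.

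Next I would invoke the Pliss lemma in its hyperbolic-times form: given $A>0$, $\log\lambda_1'<\log\lambda_2<0$ and reals $a_0,\dots,a_n\le A$ with $\frac{1}{n+1}\sum_{m=0}^{n}a_m\le\log\lambda_1'$, there exist $P\ge c\,(n+1)$ integers $0\le n_1<\dots<n_P\le n$, where $c:=\dfrac{\log\lambda_2-\log\lambda_1'}{A-\log\lambda_2}\in(0,1]$ depends only on $\lambda_1,\lambda_2,f$, such that $\sum_{m=n_r}^{h}a_m\le(h-n_r+1)\log\lambda_2$ for every $r$ and every $h$ with $n_r\le h\le n$. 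The proof of this is the standard count of right-maxima of the partial sums $S_k:=\sum_{m=0}^{k-1}(a_m-\log\lambda_2)$, $0\le k\le n+1$: one checks $S_0=0$, $S_{n+1}\le(n+1)(\log\lambda_1'-\log\lambda_2)<0$, and that consecutive values of $S$ differ by at most $A-\log\lambda_2$ in absolute value, so that the indices $k$ with $S_k\ge S_j$ for all $j\ge k$ number at least $\frac{|S_{n+1}|}{A-\log\lambda_2}\ge c(n+1)$ (telescoping the drops $S_{n_r}-S_{n_{r+1}}\le A-\log\lambda_2$), and those indices are precisely the desired $n_r$. Exponentiating back yields $\prod_{m=n_r}^{h}\|Df|_{E^{cs,i}(x_m)}\|\le\lambda_2^{\,h-n_r+1}\le\lambda_2^{\,h-n_r}$ since $\lambda_2<1$. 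Discarding $n_1$ if it equals $0$ and $n_P$ if it equals $n$ (at most two indices), and setting $N:=\max\{N_1,\lceil 4/c\rceil\}$, we obtain for every $n\ge N$ at least $\ell\ge c(n+1)-2\ge\tfrac{c}{2}\,n$ hyperbolic times $0<n_1<\dots<n_\ell<n$ with the required property, which is the statement with $c/2$ in place of $c$.

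Finally, the assertion for $\widetilde f^{-1}$ and the bundles $E^{cu,i}$ follows verbatim by running the same reduction for the additive cocycle $b_m:=\log\|Df^{-1}|_{E^{cu,i}(x_{-m})}\|$, which is legitimate precisely because $\widetilde f$ is invertible; the only additional observation is that $\|D\widetilde f^{-1}|_{\bar{E}^{cu,i}(\widetilde x)}\|=\|(Df_{x_{-1}})^{-1}|_{\bar{E}^{cu,i}}\|$ is again bounded above by the same constant $A$ by compactness of $M$, so the same appeal to \cite{Pliss72} applies. The only genuinely delicate points are thus purely bookkeeping — the translation between the $n'+1$ factors of the hypothesis and the exponent $h-n_r$ of the conclusion, and the attendant choice of the threshold $N$ and of the auxiliary ratio $\lambda_1'$ — and I do not expect a substantial obstacle, since the statement is no more than the cocycle reformulation of Pliss's original counting argument adapted to the dominated splitting over $\widetilde M$.
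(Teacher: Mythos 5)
Your proposal is correct in substance, but note that the paper itself offers no proof of this lemma: it is quoted directly from Pliss \cite{Pliss72} (in the form it is used in \cite{Lorenzo2012entropy}), so there is no internal argument to compare against. What you supply is the standard reconstruction: pass to the additive cocycle $a_m=\log\|Df|_{E^{cs,i}(x_m)}\|$, observe $|a_m|\le A(f)$ by compactness together with the fact that $f$ is a local diffeomorphism (the lower bound on $\|Df|_{E}\|$ via the conorm is what makes the two-sided bound, and hence the telescoping of drops between consecutive right-maxima of $S_k=\sum_{m<k}(a_m-\log\lambda_2)$, legitimate), and run the right-maxima count to get density $c=(\log\lambda_2-\log\lambda_1')/(A-\log\lambda_2)$. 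Exponentiating, using $\lambda_2^{h-n_r+1}\le\lambda_2^{h-n_r}$, and discarding the boundary indices gives the statement with $c/2$ in place of $c$, which is all the lemma requires; the use of the auxiliary $\lambda_1'\in(\lambda_1,\lambda_2)$ to absorb the off-by-one between the $n'+1$ factors of the hypothesis and the exponent $n'$ is the right bookkeeping. The extension to $\widetilde{f}^{-1}$ and $E^{cu,i}$ is indeed verbatim, since the relevant norms along the past orbit are controlled by the same constant.

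One caveat worth recording: as literally written, the lemma assumes the bound at a single time $n'$ yet concludes something ``for every $n\ge N$'', which cannot follow from one time alone; your reading $n:=n'$ (equivalently, assuming the hypothesis at the time under consideration, or at infinitely many times) is the only sensible one, and it matches how the lemma is actually invoked later in the paper (with $n'=m_n$ for a sequence $m_n\to\infty$). So the defect, if any, lies in the paper's formulation rather than in your argument.
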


The following application of the previous lemma gives a lower bound for the expansion of $Df$ along the subspace $E^{cs,i}$. Let $\gamma_i(\widetilde{x})$ be a central curve in $ \widetilde{B}(\widetilde{x},\delta)$ and $\widetilde{y}, \widetilde{z} \in \gamma_i(\widetilde{x})$, we denote $[\widetilde{y},\widetilde{x}]_{\gamma_i(\widetilde{x})}$ as the segment in $\gamma_i(\widetilde{x})$ with endpoints $\widetilde{y}$ and $\widetilde{z}$.

\begin{lemma}
    Consider a small enough $\delta>0$. If $\widetilde{x} \in \widetilde{M}$, $\widetilde{y} \in \Gamma_{\delta}(\widetilde{x})$, $ \widetilde{y} \neq \widetilde{x}$, $i \in \{1,...,k \}$. Suppose that $\widetilde{y} \in \gamma_i(\widetilde{x})$. If $\lambda_1 \in (\lambda, \sqrt{\lambda})$, then there is $n_0>0$ such that 

\begin{equation*}
    \prod_{j=0}^n||Df|_{E^{cs,i}(\widetilde{f}^j(\widetilde{f}^{-n}(\widetilde{y}')))}||> \lambda_1^n \hbox{ and } \prod_{j=0}^n||Df^{-1}|_{E^{cu,i}(\widetilde{f}^j(\widetilde{y}'))}||> \lambda_1^n
\end{equation*}

for all $\widetilde{y}' \in [\widetilde{x},\widetilde{y}]_{\gamma_i(\widetilde{x})}$ and $n>n_0$.
    
\end{lemma}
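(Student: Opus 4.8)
The plan is to argue by contradiction using the dominated-splitting inequality (\ref{ds}) together with the Pliss-type Lemma \ref{thip1}. Suppose the first estimate fails, i.e. for every $n_0$ there is some $\widetilde{y}' \in [\widetilde{x},\widetilde{y}]_{\gamma_i(\widetilde{x})}$ and some $n > n_0$ with $\prod_{j=0}^n \|Df|_{E^{cs,i}(\widetilde{f}^j(\widetilde{f}^{-n}(\widetilde{y}')))}\| \le \lambda_1^n$. The idea is that this would force the segment $[\widetilde{x},\widetilde{y}]_{\gamma_i(\widetilde{x})}$ to be contracted by some definite amount under forward iteration (because the center curve $\gamma_i$ is tangent, up to a thin cone, to $E^{cs,i}$, and $Df$ along $\gamma_i$ is comparable to $Df|_{E^{cs,i}}$ by the almost-tangency property of the fake foliations, item (1) of Proposition \ref{fakef}, and the constant-control Remark \ref{chconst}). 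Combined with domination this would also expand the complementary direction $E^{cu,i+1}$ superexponentially along the same orbit; but then a point $\widetilde{y}$ in $\Gamma_\delta(\widetilde{x})$ — whose whole two-sided orbit stays $\delta$-close to that of $\widetilde{x}$ — cannot lie on such a curve for $\delta$ small, since an expanding transverse direction of definite rate would eventually push $\widetilde{f}^m(\widetilde{y})$ a distance $>\delta$ from $\widetilde{f}^m(\widetilde{x})$. This is exactly where one imports the reasoning from \cite{Lorenzo2012entropy}; the point of working in the fiber $\widetilde{B}(\widetilde{x},r)$ is that along a \emph{fixed} past $\widetilde{x}=(x_n)_n$ the maps $Df_{x_n}:E^\sigma_{x_n}\to E^\sigma_{x_{n+1}}$ are honest linear isomorphisms, so Lemma \ref{thip1} and the contraction/expansion bookkeeping apply verbatim.

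More concretely I would proceed as follows. First, fix $\lambda_1 \in (\lambda,\sqrt\lambda)$ and pick $\lambda_2$ with $\lambda_1 < \lambda_2 < \sqrt\lambda$ (possible since $\lambda_1 < \sqrt\lambda < 1$, using $\lambda<1$). Second, suppose for contradiction that the conclusion fails for $\widetilde{f}^{-n}(\widetilde{y}')$, so there are arbitrarily large $n$ with $\prod_{m=0}^{n}\|Df|_{E^{cs,i}(x_m')\}\| \le \lambda_1^{n}$ along the relevant orbit segment (here $x_m'$ denotes the orbit of $\widetilde{f}^{-n}(\widetilde{y}')$). Third, apply Lemma \ref{thip1}: this produces a positive-density set of hyperbolic times $n_1 < \cdots < n_\ell < n$ with $\ell \ge cn$, at each of which $E^{cs,i}$ is uniformly contracted at rate $\lambda_2$ on the time window $[n_r,n]$. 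Fourth, translate this into genuine contraction of the center curve segment: using item (1) of Proposition \ref{fakef} (tangent spaces of $\gamma_i$ lie in a thin cone about $E^{cs,i}$), the distortion control in Remark \ref{chconst}(2), and the fact that $\widetilde{y}\in\Gamma_\delta(\widetilde{x})$ keeps everything inside $\widetilde{B}_{5\nu}$, the arclength of $\widetilde{f}^{[n_r,n]}([\widetilde{x},\widetilde{y}'])$ is contracted by roughly $(\lambda_2(1+\tau))^{n-n_r}$. Fifth, invoke domination (\ref{ds}): contraction of $E^{cs,i}$ at rate $\le \lambda_2^{n-n_r}$ forces expansion of $E^{cu,i+1}$ at rate $\ge (\lambda_2/\lambda)^{n-n_r}$ — a definite exponential rate $>1$ by the choice $\lambda_2<\sqrt\lambda$, hence $\lambda_2/\lambda > 1/\sqrt\lambda > 1$. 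Sixth, derive the contradiction: the transverse displacement of $\widetilde{f}^{n}(\widetilde{y})$ from $\widetilde{f}^{n}(\widetilde{x})$ then grows past $\delta$ before time $n$ for $n$ large, contradicting $\widetilde{y}\in\Gamma_\delta(\widetilde{x})$ (this is where one chooses $\delta$ small in terms of $\tau,\nu,c,N$). The symmetric statement for $Df^{-1}$ and $E^{cu,i}$ is obtained by running the identical argument for $\widetilde{f}^{-1}$, whose invariant splitting along the fiber is again controlled by the construction of $T\widetilde{M}$.

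The main obstacle I anticipate is the fourth step — carefully converting the linear contraction rates of $Df|_{E^{cs,i}}$ into contraction of the actual arclength of the center curve segment $[\widetilde{x},\widetilde{y}']_{\gamma_i}$. This requires (a) that $\gamma_i$ is a $C^1$ curve almost tangent to $E^{cs,i}$ (it sits inside $\widehat{W}^{cu,i}\cap\widehat{W}^{cs,i}$ by Remark \ref{ffol2}, so its tangent lies in a thin cone about $E^i \subset E^{cs,i}$), and (b) a bounded-distortion estimate so that the product of pointwise derivative norms along the curve is comparable to the product of $\|Df|_{E^{cs,i}}\|$ at the basepoints — this is precisely what Remark \ref{chconst}(2) is set up to give, provided the whole segment and its iterates stay in $\widetilde{B}_{5\nu}$, which itself needs $\delta \ll \nu$ and the local invariance (item (3) of Proposition \ref{fakef}). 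Getting all these smallness requirements to close up simultaneously, in the right order, is the delicate bookkeeping; once it does, the contradiction with $\widetilde{y}\in\Gamma_\delta(\widetilde{x})$ is immediate. A secondary subtlety is that Lemma \ref{thip1} is stated with hypothesis $\prod_{m=0}^{n'}\|Df|_{E^{cs,i}(x_m)}\|\le\lambda_1^{n'}$ at a single scale $n'$; to feed the contradiction hypothesis into it one wants this for infinitely many $n'$, which is exactly the negation of the desired inequality, so the logical structure matches cleanly.
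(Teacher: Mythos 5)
Your steps 1--4 (negate the conclusion, apply the Pliss Lemma \ref{thip1} to get a rate $\lambda_2$, and use the almost-tangency of $\gamma_i$ to $E^{cs,i}$ together with the distortion control of Remark \ref{chconst} to turn the linear estimate into arclength contraction of the segment) run parallel to the paper's proof. The genuine gap is in your steps 5--6, where the contradiction is supposed to appear. First, the domination inference is backwards: the inequality (\ref{ds}) gives $\|Df^n|_{E^{cu,i+1}}\| > \|Df^n|_{E^{cs,i}}\|/(C\lambda^n)$, so an \emph{upper} bound $\prod\|Df|_{E^{cs,i}}\|\le\lambda_2^{\,n-n_r}$ yields no useful lower bound on the growth along $E^{cu,i+1}$; to conclude expansion of $E^{cu,i+1}$ you would need a \emph{lower} bound on the $E^{cs,i}$ product, which is exactly the conclusion of the lemma you are trying to prove (and is how the paper uses domination \emph{after} the lemma, to get Corollary \ref{cordom}). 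Second, even granting transverse expansion, it cannot contradict $\widetilde{y}\in\Gamma_\delta(\widetilde{x})$: the displacement between $\widetilde{x}$ and $\widetilde{y}$ lies along the center curve $\gamma_i$, whose tangent stays in a thin cone about $E^i\subset E^{cs,i}$, so it has essentially no component in $E^{cu,i+1}$; expansion in a direction transverse to the segment does not push $\widetilde{f}^m(\widetilde{y})$ away from $\widetilde{f}^m(\widetilde{x})$.

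The paper closes the argument differently, using only the contraction you already established in step 4: since $\widetilde{y}\in\Gamma_\delta(\widetilde{x})$, the backward iterates $\widetilde{f}^{-m_n}([\widetilde{x},\widetilde{y}]_{\gamma_i(\widetilde{x})})$ remain in a $\delta$-neighborhood, so their length is uniformly bounded (by about $2\delta$), while the forward map over the window of length $m_n$ contracts arclength by $((1+\tau)\lambda_2)^{m_n}$; hence
\[
\ell\bigl([\widetilde{x},\widetilde{y}]_{\gamma_i(\widetilde{x})}\bigr)\;\le\;((1+\tau)\lambda_2)^{m_n}\cdot 2\delta\;\xrightarrow[m_n\to\infty]{}\;0,
\]
which forces $\widetilde{x}=\widetilde{y}$ and contradicts the hypothesis $\widetilde{y}\neq\widetilde{x}$ (the role of $\Gamma_\delta$ is only to bound the backward segments, not to be violated by forward separation). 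If you replace your steps 5--6 by this length-collapse argument, your proof matches the paper's; the symmetric statement for $\widetilde{f}^{-1}$ and $E^{cu,i}$ then follows as you indicate.
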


\begin{proof}

We will restrict to the $E^{cu,i}$ case, since for the  another case $E^{cs,i}$, is very similar than \cite{Lorenzo2012entropy} . 

    Let $\lambda_2 \in (\lambda_1,1)$ be such that $(1+\tau)\lambda_2 <1$ where $\tau$ is as in Remark \ref{chconst}. There exist infinitely many $m_n \in \N$, $m_n \to \infty$, and a sequence in $\widetilde{M}$, $\widetilde{y}^n \in [\widetilde{x},\widetilde{y}]_{\gamma_i(\widetilde{x})}$ such that

\begin{equation*}
    \prod_{j=0}^{m_n} ||Df^{-1}|_{E^{cu,i}(y^n_{m_n-j}))}|| \leq \lambda_1^n.
\end{equation*}

where $Df^{-1}|_{E^{cu,i}_{y_k}}$ is defined for each sequence $\widetilde{y}^n$ in $\widetilde{M}$.

By Lemma \ref{thip1}, we have

\begin{equation*}
    \prod_{j=0}^{m_n} ||Df^{-1}|_{E^{cu,i}(y^n_{m_n - j}))}|| \leq \lambda_2^{m_n}.
\end{equation*}
For all $j \geq 0$, the curve $\widetilde{f}^{j}([\widetilde{x},\widetilde{y}]_{\gamma_i(\widetilde{x})})$ stays $2\epsilon$-close to $\widetilde{f}^{j}(x)$, and $2\epsilon$-close to $\widetilde{f}^{j}(y^n)$. From remark \ref{chconst}, for all $\widetilde{y}' \in \widetilde{f}^{m_n}([\widetilde{x},\widetilde{y}]_{\gamma_i(\widetilde{x})})$, one has 

\begin{equation*}
        \prod_{j=0}^{m_n} ||Df^{-1}|_{E^{cu,i}(y'_{-j})}|| \leq ((1+\tau)\lambda_2)^{m_n}.
\end{equation*}
In particular

\begin{equation*}
    \ell([\widetilde{x},\widetilde{y}]_{\gamma_i(\widetilde{x})}) \leq ((1+\tau)\lambda_2)^{m_n} \ell([\widetilde{f}^{m_n}(\widetilde{x}),\widetilde{f}^{m_n}(\widetilde{y})]_{\gamma_i(\widetilde{f}^{m_n}(\widetilde{x}))}) 
\end{equation*}

If $\epsilon$ is small enough, then  $\ell([\widetilde{f}^{m_n}(\widetilde{x}),\widetilde{f}^{m_n}(\widetilde{y})]_{\gamma_i(\widetilde{f}^{m_n}(\widetilde{x}))})$ is bounded by $2\delta$. Thus, by letting $m_n \to \infty$, then $\widetilde{x}=\widetilde{y}$, which is a contradiction.
    \end{proof}

This lemma means that, for $\delta>0$ small enough, if $\widetilde{y}' \in [\widetilde{x}, \widetilde{y}]_{\gamma_i(\widetilde{x})}$ and $\widetilde{y} \neq \widetilde{x}$, the leaves of the fake foliations $\widehat{W}^{cs,i-1}(\widetilde{y}')$ and $\widehat{W}^{cu,i+1}(\widetilde{y}')$ behave as contraction and expansion leaves along $\widetilde{B}(\widetilde{x},\delta)$. We have as a consequence:

\begin{cor}
\label{cordom}
    Let $\widetilde{y} \in \Gamma_{\delta}(\widetilde{x}) \setminus \{ \widetilde{x} \}$. Then for all $y' \in [\widetilde{x},\widetilde{y}]_{\gamma_i(\widetilde{x})}$ we have that

\begin{equation*}
   \widehat{W}^{cu,i+1}_{\widetilde{x}}(\widetilde{y}') \cap \Gamma_{\delta}(\widetilde{x})= \{ \widetilde{y}' \} \hbox{ and } \widehat{W}^{cs,i-1}_{\widetilde{x}}(\widetilde{y}') \cap \Gamma_{\delta}(\widetilde{x})= \{ \widetilde{y}' \}
\end{equation*}
    
\end{cor}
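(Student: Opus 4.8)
The plan is to derive Corollary \ref{cordom} as an immediate consequence of the previous lemma combined with the local invariance (item 3) of the fake foliations in Proposition \ref{fakef} and the domination estimate (\ref{ds}). The point is that the previous lemma establishes that, along the segment $[\widetilde{x},\widetilde{y}]_{\gamma_i(\widetilde{x})}$, the bundle $E^{cs,i}$ is uniformly expanded in backward time and $E^{cu,i}$ is uniformly expanded in forward time. Since $E^{cs,i-1} \subset E^{cs,i}$ and $E^{cu,i+1} \subset E^{cu,i}$, domination upgrades these estimates: the leaves $\widehat{W}^{cs,i-1}_{\widetilde{x}}(\widetilde{y}')$ genuinely contract in forward time (being dominated by $E^{cu,i} \supset E^i$, which itself is expanded backward, i.e. contracted forward relative to $E^{cs,i}$) and the leaves $\widehat{W}^{cu,i+1}_{\widetilde{x}}(\widetilde{y}')$ genuinely contract in backward time, on the scale $\widetilde{B}(\widetilde{x},\delta)$.

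First I would set up the argument by contradiction: suppose there is a point $\widetilde{w} \neq \widetilde{y}'$ with $\widetilde{w} \in \widehat{W}^{cu,i+1}_{\widetilde{x}}(\widetilde{y}') \cap \Gamma_{\delta}(\widetilde{x})$. Using the local invariance of the fake foliation $\widehat{W}^{cu,i+1}$ (Proposition \ref{fakef}, item 3) I would push $\widetilde{w}$ and $\widetilde{y}'$ backward under $\widetilde{f}^{-1}$, keeping them on the same $\widehat{W}^{cu,i+1}$-leaf as long as both orbits stay in $\widetilde{B}(\cdot,r_1)$; since $\widetilde{w} \in \Gamma_{\delta}(\widetilde{x})$ and $\widetilde{y}' \in [\widetilde{x},\widetilde{y}]_{\gamma_i(\widetilde{x})} \subset \Gamma_{\delta}(\widetilde{x})$, both stay $\delta$-close to the orbit of $\widetilde{x}$ for all negative time. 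Then the estimate $\prod_{j=0}^n \|Df^{-1}|_{E^{cu,i}(\widetilde{f}^j(\widetilde{y}'))}\| > \lambda_1^n$ from the lemma, together with the almost-tangency of the fake leaf to $E^{cu,i+1} \subset E^{cu,i}$ (cone of radius $\epsilon$) and the distortion control of Remark \ref{chconst}, forces the intra-leaf distance $d^{cu}(\widetilde{f}^{-n}(\widetilde{w}),\widetilde{f}^{-n}(\widetilde{y}'))$ to grow — but domination (\ref{ds}) pushes this growth to be faster than the diameter $2\delta$ of the fiber allows, a contradiction. The symmetric statement for $\widehat{W}^{cs,i-1}$ follows by applying the same reasoning to $\widetilde{f}$ in forward time, using the other inequality in the lemma.

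I expect the main obstacle to be the careful bookkeeping of how the fake-foliation local invariance interacts with the Pliss-type expansion estimate: one must ensure that the backward (resp. forward) iterates of the two points genuinely remain on a common fake leaf of controlled size, i.e. that they never exit the $\widetilde{B}(\cdot,r_1)$-neighborhoods where item 3 of Proposition \ref{fakef} applies, and that the $\epsilon$-cone almost-tangency does not destroy the quantitative growth rate when converting derivative bounds along $E^{cu,i}$ into actual distances along the fake leaf. This is where the choice of $\delta$ small relative to $r_1$, $r$, $\nu$ and $\epsilon$ in Remark \ref{chconst} is used. Once these scales are fixed compatibly, the contradiction is quantitative and essentially identical to the corresponding step in \cite{Lorenzo2012entropy}, now carried out inside a single fiber $\widetilde{B}(\widetilde{x},\delta)$ of the inverse limit where the derivative cocycle along the prescribed past behaves exactly as in the diffeomorphism case.
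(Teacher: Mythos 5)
Your overall plan (previous lemma $+$ domination $+$ local invariance of the fake foliations) is the intended route, but the execution has a directional error that breaks both halves of the argument. The two estimates of the lemma are only \emph{non-contraction} bounds: since $\lambda_1<1$, the inequality $\prod_{j}\|Df|_{E^{cs,i}}\|>\lambda_1^n$ along the \emph{past} block and $\prod_j\|Df^{-1}|_{E^{cu,i}}\|>\lambda_1^n$ along the \emph{future} block do not say that $E^{cs,i}$ is ``expanded backward'' or that $E^{cu,i}$ is ``expanded forward''; genuine exponential behavior only appears after combining with (\ref{ds}), and it lands exactly where you say in your first paragraph: $E^{cu,i+1}$ is expanded by $Df^n$ from time $-n$ to time $0$ (so $\widehat{W}^{cu,i+1}_{\widetilde{x}}(\widetilde{y}')$ is \emph{contracted under backward iteration}), and $E^{cs,i-1}$ is contracted by $Df^n$ along the future block (so $\widehat{W}^{cs,i-1}_{\widetilde{x}}(\widetilde{y}')$ is \emph{contracted under forward iteration}). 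Your second paragraph then iterates in precisely these time directions and claims the intra-leaf distance $d^{cu}(\widetilde{f}^{-n}(\widetilde{w}),\widetilde{f}^{-n}(\widetilde{y}'))$ grows beyond $2\delta$. It does not: with the derived bounds one gets $d^{cu}(\widetilde{f}^{-n}(\widetilde{w}),\widetilde{f}^{-n}(\widetilde{y}'))\lesssim(\lambda/\lambda_1)^n\,d^{cu}(\widetilde{w},\widetilde{y}')\to 0$, so no contradiction is reached; note also that the estimate you quote for this step, $\prod_j\|Df^{-1}|_{E^{cu,i}(\widetilde{f}^j(\widetilde{y}'))}\|>\lambda_1^n$, runs along the \emph{forward} orbit of $\widetilde{y}'$ and therefore cannot control the backward iterates you are comparing. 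The same problem, mirrored, affects the ``symmetric'' treatment of $\widehat{W}^{cs,i-1}$: forward iteration makes points of that leaf converge, not separate. In short, your paragraph 1 (correct) and paragraph 2 (the contradiction mechanism) are mutually inconsistent, and the contradiction as described never materializes.

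The actual separation that proves the corollary must occur in the complementary time directions: a second point of $\Gamma_{\delta}(\widetilde{x})$ on $\widehat{W}^{cu,i+1}_{\widetilde{x}}(\widetilde{y}')$ has to be expelled under \emph{forward} iteration, and one on $\widehat{W}^{cs,i-1}_{\widetilde{x}}(\widetilde{y}')$ under \emph{backward} iteration. Extracting expansion of the $E^{cu,i+1}$-cone (resp.\ of $E^{cs,i-1}$ backward) along those orbit blocks from the lemma is not the one-line cone/distortion computation you sketch; it is exactly the bookkeeping carried out in \cite{Lorenzo2012entropy}, which is what the paper itself invokes when it states the corollary as a consequence of the lemma (the leaves ``behave as contraction and expansion leaves''). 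To repair your argument you would need either to prove the analogue of the lemma's product estimates along the missing blocks (e.g.\ a lower bound for $\prod_j\|Df|_{E^{cs,i}}\|$ along future blocks of points of the segment, obtained again by a Pliss-type contradiction using that the forward iterates of $[\widetilde{x},\widetilde{y}]_{\gamma_i(\widetilde{x})}$ and of the transverse leaf segment stay of size $O(\delta)$), or to quote the corresponding corollary of \cite{Lorenzo2012entropy} and verify it transfers fiberwise to $\widetilde{M}$, as the paper does.
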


\begin{prop}
\label{ccurve}
    For $\delta>0$ sufficiently small, the set $\Gamma_{\delta}(\widetilde{x})$ is either $\{ \widetilde{x} \}$ or is contained in a curve $\widetilde{\gamma}_i(\widetilde{x})$ for some $i \in \{ 1,...,k \}$.
\end{prop}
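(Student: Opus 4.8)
## Proof Plan for Proposition \ref{ccurve}

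The plan is to show that if $\Gamma_{\delta}(\widetilde{x})$ contains a point $\widetilde{y} \neq \widetilde{x}$, then it cannot "spread out" into directions transverse to a single center curve through $\widetilde{x}$. The key structural input is the existence, on the fiber $\widetilde{B}(\widetilde{x},\delta)$, of the nested families of fake foliations $\widehat{W}^{cs,i}_{\widetilde{x}}$ and $\widehat{W}^{cu,i+1}_{\widetilde{x}}$ from Proposition \ref{fakef}, together with the fact (Remark \ref{ffol2}) that $\widehat{W}^{cs,i}_{\widetilde{x}}(\widetilde{z}) \cap \widehat{W}^{cu,i+1}_{\widetilde{x}}(\widetilde{w})$ is a single point for nearby $\widetilde{z},\widetilde{w}$. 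I would first use these to define, for each point $\widetilde{y} \in \widetilde{B}(\widetilde{x},\delta)$, its "center coordinate": since $E^c = E^1 \oplus \cdots \oplus E^k$, the point $\widetilde{y}$ projects along $\widehat{W}^{s}_{\widetilde{x}}$ and $\widehat{W}^{u}_{\widetilde{x}}$ onto a point of the center leaf $\widehat{W}^{c}_{\widetilde{x}}(\widetilde{x})$, and by the coherence of the fake foliations this center point lies in a well-defined $i$-th coordinate curve $\gamma_i(\widetilde{x})$ obtained by successively intersecting with $\widehat{W}^{cs,i}_{\widetilde{x}}$ and $\widehat{W}^{cu,i}_{\widetilde{x}}$.

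The heart of the argument is then: if $\widetilde{y} \in \Gamma_{\delta}(\widetilde{x}) \setminus \{\widetilde{x}\}$, I want to show the whole set $\Gamma_{\delta}(\widetilde{x})$ sits inside one coordinate curve $\widetilde{\gamma}_i(\widetilde{x})$. Suppose $\widetilde{y}$ has nontrivial displacement from $\widetilde{x}$ in the $i$-th coordinate (pick $i$ minimal, or any $i$ for which the displacement is nonzero). By Corollary \ref{cordom}, applied along the segment $[\widetilde{x},\widetilde{y}]_{\gamma_i(\widetilde{x})}$, both $\widehat{W}^{cs,i-1}_{\widetilde{x}}$ and $\widehat{W}^{cu,i+1}_{\widetilde{x}}$ behave as genuine contracting/expanding leaves along $\widetilde{B}(\widetilde{x},\delta)$, so they meet $\Gamma_{\delta}(\widetilde{x})$ only at their base point. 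Now take an arbitrary $\widetilde{z} \in \Gamma_{\delta}(\widetilde{x})$ and decompose its position relative to $\widetilde{x}$ using the fake-foliation coordinates. If $\widetilde{z}$ had a nonzero component in $\widehat{W}^{cu,i+1}$ (i.e. in any direction $E^{i+1},\dots,E^u$), then the point $\widehat{W}^{cu,i+1}_{\widetilde{x}}(\widetilde{z}')$ for the appropriate $\widetilde{z}'$ would contain two distinct points of $\Gamma_{\delta}(\widetilde{x})$, contradicting Corollary \ref{cordom}; symmetrically for $\widehat{W}^{cs,i-1}$. Hence $\widetilde{z}$ can differ from $\widetilde{x}$ only in the $E^i$-direction, i.e. $\widetilde{z} \in \widetilde{\gamma}_i(\widetilde{x})$.

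The main obstacle, and the step requiring the most care, is making the "decompose $\widetilde{z}$ into fake-foliation coordinates and extract the forbidden transverse component" argument fully rigorous when $\dim E^c > 1$: one must verify that the successive intersections are genuinely well-defined (this uses the coherence part of Proposition \ref{fakef} and the single-point intersection of Remark \ref{ffol2} at every level $i$, not just $i=0$ and $i=k$), and that a nonzero displacement in some block $E^{i+1} \oplus \cdots \oplus E^u$ really does force two points of $\Gamma_\delta(\widetilde{x})$ onto a common $\widehat{W}^{cu,i+1}$-leaf. A subtlety is that $\Gamma_\delta(\widetilde{x})$ being invariant under forward and backward iteration is what legitimizes applying Corollary \ref{cordom} (whose proof uses the Pliss/hyperbolic-time mechanism along the whole bi-infinite orbit inside $\widetilde{B}(\widetilde{x},\delta)$); I would make sure $\delta$ is chosen small enough — in particular smaller than the $r_1$ of Proposition \ref{fakef} and the $5\nu$ of Remark \ref{chconst} — so that all iterates $\widetilde{f}^n(\widetilde{z})$, $n \in \mathbb{Z}$, stay in the regime where both the fake foliations and the Pliss estimates are valid. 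Once the transverse directions are excluded at every level, the conclusion that $\Gamma_\delta(\widetilde{x})$ is either $\{\widetilde{x}\}$ or contained in a single $\widetilde{\gamma}_i(\widetilde{x})$ follows immediately.
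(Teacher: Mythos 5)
Your overall route is the same as the paper's: the paper proves Proposition \ref{ccurve} by invoking Corollary \ref{cordom} together with the fake-foliation machinery of Proposition \ref{fakef}, deferring the combinatorial part to the argument of \cite{Lorenzo2012entropy}, and your plan is a reconstruction of exactly that argument. However, the step you yourself flag as the heart of the matter is where a genuine gap remains, and as written it does not close. Corollary \ref{cordom} only controls the leaves $\widehat{W}^{cu,i+1}_{\widetilde{x}}(\widetilde{y}')$ and $\widehat{W}^{cs,i-1}_{\widetilde{x}}(\widetilde{y}')$ through points $\widetilde{y}'$ of a segment $[\widetilde{x},\widetilde{y}]_{\gamma_i(\widetilde{x})}$ whose endpoint $\widetilde{y}$ is \emph{already known} to be a point of $\Gamma_{\delta}(\widetilde{x})$ lying on the center curve $\gamma_i(\widetilde{x})$ (this hypothesis comes from the lemma the corollary is derived from). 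You apply it to a point $\widetilde{y}$ that merely has ``nontrivial displacement in the $i$-th coordinate,'' which is circular: producing a point of $\Gamma_\delta(\widetilde{x})$ on a center curve is essentially what the proposition asks you to prove. Moreover, for an arbitrary $\widetilde{z}\in\Gamma_\delta(\widetilde{x})$ with a transverse component, the claimed contradiction does not follow: the auxiliary point $\widetilde{z}'$ obtained by projecting along $\widehat{W}^{cu,i+1}_{\widetilde{x}}$ lands on the leaf $\widehat{W}^{cs,i}_{\widetilde{x}}(\widetilde{x})$, not on the curve $\gamma_i(\widetilde{x})$, let alone on the segment $[\widetilde{x},\widetilde{y}]$, and it is not known to belong to $\Gamma_\delta(\widetilde{x})$; so the leaf in question need not contain two points of $\Gamma_\delta(\widetilde{x})$, and Corollary \ref{cordom} is not violated. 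Note also that the union of the forbidden leaves over the segment is a lower-dimensional set, not a neighborhood of $\widetilde{x}$, so ``avoiding them'' does not force $\widetilde{z}$ onto $\gamma_i(\widetilde{x})$.

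What is missing is the actual elimination scheme of \cite{Lorenzo2012entropy}, which the paper's one-line proof silently imports. There one first kills the $E^u$- and $E^s$-components of any $\widetilde{y}\in\Gamma_\delta(\widetilde{x})$ directly, using the uniform expansion/contraction and local invariance of the fake strong leaves (otherwise forward or backward iterates of $\widetilde{y}$ separate from those of $\widetilde{x}$ beyond $\delta$), and then runs an induction over the central indices: one applies the Pliss/hyperbolic-time estimates not only along $[\widetilde{x},\widetilde{y}]_{\gamma_i}$ but to the auxiliary intersection points produced by Remark \ref{ffol2}, checking that these points remain in a comparable dynamical ball (a fixed multiple of $\delta$), and uses the domination to compare the contraction along $E^{cs,i}$ with the weaker behavior along $E^{cu,i+1}$ to force the intersection points to coincide; only at that stage does Corollary \ref{cordom} deliver the conclusion that all of $\Gamma_\delta(\widetilde{x})$ sits on a single curve $\widetilde{\gamma}_i(\widetilde{x})$. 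Your sketch replaces this induction by a one-line ``two points on a common leaf'' contradiction that is not available, so the proposal, while aimed in the right direction and correctly identifying the needed ingredients (Proposition \ref{fakef}, Remark \ref{ffol2}, Corollary \ref{cordom}, smallness of $\delta$), does not yet constitute a proof.
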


\begin{proof}
    Since $\widetilde{f}: \widetilde{M} \to \widetilde{M}$ is an homeomorphism, the proof is a direct consequence of  Corollary \ref{cordom} together with the same arguments of \cite{Lorenzo2012entropy}.
\end{proof}

\begin{proof}[Proof of Theorem B]
The previous Proposition implies that $h(\widetilde{f}, \Gamma_{\delta}(\widetilde{x}) )=0$ following the same ideas presented in Remark \ref{onedimc}, this implies that $\widetilde{f}$ is $h$-expansive. Moreover, by Theorem \ref{semicont}, the map $\widetilde{f}$ is upper semi-continuous. Applying the same arguments used in the final part of Theorem A, we conclude that the pair $(f, \phi)$ admits an equilibrium state.
\end{proof}

\section{Proof of Theorem C}

First we prove that $\lambda^c(\tilde{\mu})$ is continuous. Let $\tilde{\mu} \mapsto \lambda^c(\tilde{\mu})$ be the central Lyapunov exponent for $\tilde{f}$, where $\tilde{\mu}$ is an $\tilde{f}$-invariant measure in $\widetilde{M}$.
\begin{lemma}

The center Lyapunov exponent for $\tilde{f}$ satisfies the following equation:
    \begin{equation}
        \lambda^c(\tilde{\mu}) = \int_{\widetilde{M}} \log ||Df|_{E^c(\tilde{x})}|| d{\tilde{\mu}},  
    \end{equation}
this implies that $\lambda^c(\tilde{\mu})$ is continuous in $\mathcal{M}(\tilde{f})$.    
\end{lemma}

\begin{proof}
Since the dimension of the center subspace $E^c_{\tilde{x}}$ is one dimensional and by the definition of $D\tilde{f}_{\tilde{x}}$ we have the following:

    \begin{eqnarray*}
\lambda^c(\tilde{\mu}) & = & \int_{\widetilde{M}} \lim_{n \to \infty} \dfrac{1}{n} \log ||D\tilde{f}^n|_{E^c(\tilde{x})}|| d\tilde{\mu} \\
& = & \int_{\widetilde{M}} \lim_{n \to \infty} \dfrac{1}{n} \log ||Df^n|_{E^c(x_0)}|| d\tilde{\mu} \\
  & = & \int_{\widetilde{M}} \lim_{n \to \infty} \dfrac{1}{n} \sum_{j=0}^{n-1} \log ||Df|_{E^c_j(f^j(x_0))}|| d\tilde{\mu}  \\
  & = & \int_{\widetilde{M}} \log ||Df|_{E^c_j(x)}|| d\tilde{\mu}(\tilde{x}), \hbox{by Birkhoff's ergodic theorem}.
\end{eqnarray*}

Since $f \in C^{1+\alpha}(M)$ and a local diffeomorphism, the continuity follows. 

\end{proof}

Let $\mu_n$ be a family of ergodic measures of maximal entropy of $f$ such that $\mu_n \to \mu$, We have the following upper semicontinuity inequality given in \cite{misiurewicz1976topological}. 

    \begin{equation}
    \label{tail_ineq_1}
        \limsup_{ n \to \infty} h_{\mu_n}(f) \leq h_{\mu}(f) + h^*(f) \hbox{ if } \mu_n \longrightarrow \mu \hbox{ weak-}*,
    \end{equation}
where $h^*(f) := \lim_{\epsilon \to 0^+} h^*_f(\epsilon)$ is the tail entropy. We know by \cite{alvarez2022existence} that $f$ is assymptotically expansive, therefore $h^*(f)=0$, then by (\ref{tail_ineq_1}) $\mu$ maximize the entropy of $f$.

Since $\lambda^c(\tilde{\mu}) \neq 0$, we consider two cases:

\textbf{Fist case:} When $\lambda^c(\tilde{\mu}) < 0$, the argument closely follows that of \cite{lima2024measures}, as the condition imposed by the authors, $h_{\mathrm{top}}(f) > \log(\deg(f))$, ensures a negative central Lyapunov exponent. Therefore, we focus on the remaining case, where $\lambda^c(\tilde{\mu}) > 0$.

\textbf{Second case:}  Assume $\lambda^c(\tilde{\mu})>0$. We lift the measures $\mu_n$ and $\mu$, where $\tilde{\mu}_n \to \tilde{\mu}$. By the continuity of $\lambda^c$ there exist $n_0$, such that $\lambda^c(\tilde{\mu}_n)$ is inside a small neighborhood of $\lambda^c(\tilde{\mu})$, then without loss of generality there exist $\chi>0$ such that the central Lyapunov exponent of $\tilde{\mu}_n$ for $\tilde{f}^{-1}$ is smaller than $-\chi$ for $n \geq n_0$.

Take $\chi>0$ such that the central Lyapunov exponent of $\tilde{\mu}_n$ for $\tilde{f}^{-1}$ is smaller than $-\chi$ for $n \geq n_0$. Let $\Lambda^u$ be the sets of points where we look for uniform expansion, defined as:

\begin{equation*}
    \Lambda^u := \left\{ (x_n) \in \widetilde{M}: \log ||Df^{-1}_{x_{-n}} \cdot ... \cdot Df^{-1}_{x_0}|| < \frac{-n \chi}{2} \right\}
\end{equation*}

\begin{lemma}
\label{pliss_0}
    There is $\delta>0$ such that $\tilde{\mu}_n(\Lambda^u)>\delta$ for $n \geq n_0$
\end{lemma}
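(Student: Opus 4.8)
\textbf{Proof proposal for Lemma \ref{pliss_0}.}

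The plan is to obtain the uniform lower bound on $\widetilde{\mu}_n(\Lambda^u)$ from a Pliss-type argument applied to the integrand $\varphi_n(\widetilde{x}) := \log \|Df^{-1}|_{E^c(x_0)}\|$, exploiting that for $n \geq n_0$ the time average of this function under $\widetilde{f}^{-1}$ is, by Birkhoff and the previous continuity lemma, equal to the central exponent of $\widetilde{f}^{-1}$, which is $\leq -\chi < -\chi/2$. First I would reduce to the ergodic case: by the ergodic decomposition of $\widetilde{\mu}_n$ and the fact that $\Lambda^u$ is defined by an $\widetilde{f}$-invariant pointwise condition (after passing to the almost-everywhere-defined limit set), it suffices to prove the bound for ergodic components, and there $\frac1N \sum_{j=0}^{N-1} \varphi_n(\widetilde{f}^{-j}\widetilde{x}) \to \lambda^c_{-}(\widetilde{\mu}_n) \leq -\chi$ for $\widetilde{\mu}_n$-a.e.\ $\widetilde{x}$. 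Note one must be slightly careful: the set $\Lambda^u$ as written asks for the bound $-n\chi/2$ to hold for one specific $n$ (or for all large $n$ — the statement is ambiguous); I would interpret it as the set of $\widetilde{x}$ for which $\log\|Df^{-n}_{x_0}\cdots\| < -n\chi/2$ holds for all sufficiently large $n$, or fix the single scale used downstream, and state this precisely at the start of the proof.

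The core step is the quantitative Pliss lemma (Lemma \ref{thip1}, or its classical one-sided version): if a sequence $a_0, a_1, \dots$ satisfies $\frac1N\sum_{j<N} a_j \leq -\chi$ and each $a_j \leq \log\|Df^{-1}\|_{\sup} =: A$, then the density of ``good times'' $m$ at which $\sum_{j=m}^{h} a_j \leq -\tfrac{\chi}{2}(h-m+1)$ for all $h \geq m$ is at least $c = \chi/(2(A+\chi/2)) > 0$, uniformly. Applying this along $\widetilde{f}^{-1}$-orbits to $a_j = \varphi_n(\widetilde{f}^{-j}\widetilde{x})$ and integrating: the set of $(\widetilde{x}, m)$ with $m$ a good time for $\widetilde{x}$ has, by Fubini over the first $N$ coordinates, $\widetilde{\mu}_n$-mass-times-$N$ at least $cN$, so by invariance the ``instantaneous'' good set $G_n := \{\widetilde{x} : 0 \text{ is a good time for } \widetilde{x}\}$ has $\widetilde{\mu}_n(G_n) \geq c$. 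Finally $G_n \subset \Lambda^u$ (taking $h \to \infty$ in the good-time inequality gives exactly the defining inequality of $\Lambda^u$, up to the off-by-one in the exponent which is absorbed), so $\widetilde{\mu}_n(\Lambda^u) \geq c =: \delta$, with $\delta$ depending only on $\chi$ and the $C^1$-size of $f$, hence uniform in $n \geq n_0$. The uniformity in $n$ is automatic because the Pliss constant $c$ depends only on $\chi$ and $A$, both fixed once $n_0$ is chosen.

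The main obstacle I anticipate is not the Pliss estimate itself but the bookkeeping needed to make ``the central exponent of $\widetilde{\mu}_n$ for $\widetilde{f}^{-1}$ is $< -\chi$'' into a statement about $\widetilde{f}^{-1}$-Birkhoff averages of a genuinely defined continuous (or at least bounded measurable) function on $\widetilde{M}$, and to reconcile the two-norm issue — $\|Df^{-1}|_{E^c}\|$ versus the operator norm $\|Df^{-n}_{x_0}\cdots Df^{-1}_{x_0}\|$ appearing in the definition of $\Lambda^u$. Since $E^c$ is one-dimensional here (this is the standing hypothesis of Theorem C), $\|Df^{-n}|_{E^c}\| = \prod_{j} \|Df^{-1}|_{E^c(x_{-j})}\|$ exactly, and the dominated splitting $E^s \oplus E^c \oplus E^u$ lets one replace the full operator norm of $Df^{-n}$ restricted appropriately by the central one up to a uniform multiplicative constant (absorbed into the ``for all large $n$'' clause); I would spell this comparison out using the bounded-angle property recorded after Proposition \ref{fakef}. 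A secondary point is ensuring $\Lambda^u$ is measurable and, if one wants invariance, that the ``for all large $n$'' version is genuinely $\widetilde{f}$-invariant — both are routine but should be stated.
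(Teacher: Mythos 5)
Your overall route is the paper's: apply a Pliss-type lemma to $a_j=\log\|Df^{-1}|_{E^c(x_{-j})}\|$ along backward orbits, use Birkhoff (plus ergodic decomposition) to get the hypothesis on time averages from $\lambda^c(\widetilde{\mu}_n)\le -\chi$, and convert a lower bound on the density of good times into a lower bound on $\widetilde{\mu}_n(\Lambda^u)$ by the standard Fubini/invariance argument; the paper compresses exactly this into its three-line proof of Lemma \ref{pliss_0}, invoking Lemma \ref{pliss} with $\alpha_1=\inf_y\log\|Df^{-1}_y|_{E^c}\|$, $\alpha_2=-\chi$, $\epsilon=\chi/2$. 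However, your quantitative Pliss statement is wrong as written, and this is the one step that would fail: you assume only an \emph{upper} bound $a_j\le A=\log\|Df^{-1}\|_{\sup}$ and claim good-time density at least $\chi/\bigl(2(A+\chi/2)\bigr)$. For ``good times'' defined by smallness of all \emph{forward} partial sums (which is what membership in $\Lambda^u$ at time $0$ requires), the needed hypothesis is a \emph{lower} bound on the $a_j$, not an upper bound. Indeed, take blocks consisting of $k$ terms equal to $A$ followed by one term equal to $-M$ with $M$ huge: the averages are $\le-\chi$, the upper bound $a_j\le A$ holds, but the only good times are the drop positions, of density $1/(k+1)$, which is arbitrarily small — no bound depending only on $\chi$ and $A$ can hold. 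This is precisely why Lemma \ref{pliss} requires $(a_i)\in(\alpha_1,+\infty)^{\mathbb N}$ and yields density $\delta=\epsilon/(\alpha_2+\epsilon-\alpha_1)$ in terms of the lower bound $\alpha_1$.

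The gap is repairable in one line, and the repair lands you exactly on the paper's proof: since $f$ is a $C^1$ local diffeomorphism of a compact manifold and $\dim E^c=1$, the sequence $a_j$ is uniformly bounded below by $\alpha_1=\inf_y\log\|Df^{-1}_y|_{E^c}\|>-\infty$, so Lemma \ref{pliss} with $\alpha_2=-\chi$, $\epsilon=\chi/2$ gives density at least $\delta=\frac{\chi/2}{-\chi/2-\alpha_1}>0$, uniform in $n\ge n_0$, and your Fubini/invariance step then gives $\widetilde{\mu}_n(\Lambda^u)\ge\delta$. Your two side remarks are well taken and consistent with the intended reading: the inequality in $\Lambda^u$ should be understood along $E^c$ (with a stable bundle present, the full operator norm of $Df^{-n}$ grows under backward iteration and a literal reading would make $\Lambda^u$ empty), and since $E^c$ is one-dimensional the product formula $\|Df^{-n}|_{E^c}\|=\prod_j\|Df^{-1}|_{E^c(x_{-j})}\|$ is exact, so no comparison constant is actually needed.
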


Using a version of the Pliss Lemma provided in \cite{Crovisier2018dissipative}, we show that every ergodic measure of maximal entropy assigns positive measure to the set $\Lambda^u$.

\begin{lemma}[(\cite{Crovisier2018dissipative}]
\label{pliss}
    For any $\epsilon >0$, $\alpha_1 < \alpha_2$ and any sequence $(a_i) \in (\alpha_1,+\infty)^{\N}$ satisfying

\begin{equation*}
    \limsup_{n \to +\infty} \dfrac{a_0 + \cdots + a_{n-1}}{n} \leq \alpha_2,
\end{equation*}

there exists a sequence of integers $0<n_1<n_2< \cdots$ such that:

\begin{itemize}
    \item for any $\ell \geq 1$ it holds
    \begin{equation*}
        \dfrac{a_{n_{\ell}}+ \cdots + a_{n-1}}{(n-n_{\ell})} \leq \alpha_2 + \epsilon, \, \forall n > n_{\ell}.
    \end{equation*}
    \item the upper density $\limsup_{\ell \to +\infty} \frac{\ell}{n_{\ell}}$ is larger than $\delta = \frac{\epsilon}{\alpha_2+\epsilon-\alpha_1}$.
\end{itemize}
    
\end{lemma}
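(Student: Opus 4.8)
The plan is to prove this Pliss-type lemma by the classical ``running maximum'' argument after a reduction. Set $b_i := a_i - (\alpha_2+\epsilon)$, so that $b_i > \alpha_1 - \alpha_2 - \epsilon =: -\kappa$ with $\kappa = \alpha_2+\epsilon-\alpha_1 > 0$, and let $T_n := \sum_{i=0}^{n-1} b_i$ (with $T_0 = 0$). The hypothesis $\limsup_n \tfrac1n(a_0+\cdots+a_{n-1}) \le \alpha_2$ translates into $\limsup_n T_n/n \le -\epsilon < 0$, and in particular $T_n \to -\infty$. The inequality $\tfrac{a_{n_\ell}+\cdots+a_{n-1}}{n-n_\ell} \le \alpha_2+\epsilon$ for all $n > n_\ell$ is exactly the statement $T_n \le T_{n_\ell}$ for all $n \ge n_\ell$. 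So I will declare $n$ to be a \emph{Pliss time} when $T$ attains its tail supremum at $n$, i.e. $T_n = M_n$ where $M_n := \sup_{m \ge n} T_m$; since $T_m \to -\infty$, this supremum is attained at some finite $m^* \ge n$, and $m^*$ is itself a Pliss time $\ge n$. Thus there are infinitely many Pliss times, and for each of them the first bullet holds by construction (using $T_n - T_{n_\ell} = \sum_{i=n_\ell}^{n-1} a_i - (\alpha_2+\epsilon)(n-n_\ell) \le 0$).

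For the density estimate I would track the nonincreasing sequence $(M_n)_n$. Two observations: (i) $M$ is constant across non-Pliss times, because if $n$ is not a Pliss time the tail supremum from $n$ is attained strictly after $n$, so $M_n = M_{n+1}$; (ii) the drop at a Pliss time is bounded by $\kappa$, since $M_{n+1} \ge T_{n+1} = T_n + b_n > T_n - \kappa = M_n - \kappa$. Telescoping, $M_0 - M_N = \sum_{n<N}(M_n - M_{n+1}) < \kappa \cdot \#\{\text{Pliss times in }[0,N)\}$. Now $M_0 \ge T_0 = 0$, and fixing any $\epsilon' \in (0,\epsilon)$ we have $T_m/m \le -\epsilon'$ for all $m$ large, hence $M_N = T_{m^*} \le -\epsilon' m^* \le -\epsilon' N$ once $N$ is large. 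Therefore $\#\{\text{Pliss times} < N\} > (\epsilon'/\kappa)N$ for all large $N$. Listing the Pliss times $\ge 1$ as $n_1 < n_2 < \cdots$ (dropping $0$ is asymptotically irrelevant) and putting $\ell = \#\{\text{Pliss times in }[1,N)\}$, we get $n_\ell < N$ and $\ell > (\epsilon'/\kappa)N > (\epsilon'/\kappa)n_\ell$ for arbitrarily large $\ell$, so $\limsup_\ell \ell/n_\ell \ge \epsilon'/\kappa$; letting $\epsilon' \uparrow \epsilon$ yields $\limsup_\ell \ell/n_\ell \ge \epsilon/(\alpha_2+\epsilon-\alpha_1) = \delta$.

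The only genuinely delicate point is getting the \emph{sharp} constant $\delta = \epsilon/(\alpha_2+\epsilon-\alpha_1)$ rather than a smaller multiple of $\epsilon/\kappa$; this is precisely what observations (i)--(ii) deliver — $M$ loses height \emph{only} at Pliss times and \emph{by less than} $\kappa$ at each — together with the freedom to replace $\epsilon$ by any $\epsilon' < \epsilon$ in the decay rate of $T_n$ and then pass to the limit. Everything else is routine telescoping; no dynamics enters, so the same argument applies verbatim to the analogous statement for $\widetilde f^{-1}$ used afterwards. (Alternatively one may simply invoke \cite{Crovisier2018dissipative} directly, but the proof above is short and self-contained.)
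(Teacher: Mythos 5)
The paper does not prove this lemma at all --- it is quoted verbatim from \cite{Crovisier2018dissipative} and used as a black box --- so your self-contained argument is a welcome addition rather than a divergence from the paper's route. Your proof is the standard ``ladder point / running maximum'' argument and it is correct: the translation to $T_n=\sum_{i<n}\bigl(a_i-(\alpha_2+\epsilon)\bigr)$ is right, the first bullet is exactly $T_n\le T_{n_\ell}$ for $n\ge n_\ell$, your observations (i)--(ii) (the tail supremum $M_n$ drops only at Pliss times and by less than $\kappa=\alpha_2+\epsilon-\alpha_1$ at each) are verified correctly, and the telescoping together with $M_0\ge 0$, $M_N\le -\epsilon' N$ for large $N$ gives the density bound, with the passage $\epsilon'\uparrow\epsilon$ handled properly. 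The only discrepancy with the statement as quoted is that you obtain $\limsup_\ell \ell/n_\ell\ \ge\ \delta$ rather than the strict inequality ``larger than $\delta$''; your per-step strict bound $M_n-M_{n+1}<\kappa$ does not upgrade to strictness in the limsup, since $a_i$ may approach $\alpha_1$ along the sequence. This is immaterial here: in the paper the lemma is only used (in Lemma \ref{pliss_0}) to produce a \emph{positive} density of hyperbolic times, for which $\ge\delta>0$ suffices; if one insists on the literal statement, one can simply cite \cite{Crovisier2018dissipative}, or note that in the application any fixed $\delta'<\delta$ works equally well.
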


\begin{proof}[Proof of lemma \ref{pliss_0}]
    Let $\tilde{\mu}_n$ be an ergodic measure of maximal entropy for $\tilde{f}$, such that $n \geq n_0$. 
For any $\tilde{x} = (x_n)_n$ and each $i \geq 0$, set $a_i = \log|| Df^{-1}_{x_{-i}}|_{E^c}||$, and let 

\begin{equation*}
    \alpha_1 = \inf_{\tilde{x} \in \widetilde{M}} \log ||Df^{-1}_{x_0}|_{E^c}||,\, \alpha_2 = -\chi,\, \epsilon = \frac{\chi}{2}.
\end{equation*}

By lemma \ref{pliss} exists $(n_{\ell})_{\ell}$ such that for any $\ell \geq 0$ and $N \geq n_{\ell}$, it follows

\begin{equation*}
        \dfrac{\log || Df^{-1}_{x_{-N+1}}|_{E^c} \cdot ... \cdot Df^{-1}_{x_{-n_{\ell}}}|_{E^c} ||}{N-n_{\ell}} =     \frac{\log ||Df^{-1}_{x_{-N+1}}|_{E^c}|| + \cdots +\log ||Df^{-1}_{x_{-n_{\ell}}}|_{E^c}|| }{N-n_{\ell}} \leq  -\frac{\chi}{2}
\end{equation*}

which implies that for any $\tilde{x} \in \widetilde{M}$, $\tilde{f}^{-n_{\ell}}(\tilde{x}) \in \Lambda^u$. By lemma \ref{pliss} and Birkhoff's ergodic theorem, we obtain that

\begin{equation*}
  \tilde{\mu}_n(\Lambda^u) \geq \limsup_{\ell \to \infty} \frac{\ell}{n_{\ell}}>\delta.   
\end{equation*}

\end{proof}

\begin{lemma}
\label{unif_u}
    There exist $\ell_0>0$ such that the length of the local unstable manifold $W^u(x_n)$ is larger than $\ell_0$ for any $(x_n)_n \in \Lambda^u$
\end{lemma}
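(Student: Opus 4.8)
The plan is to exploit the uniform backward contraction along the central direction guaranteed by membership in $\Lambda^u$, combined with the graph transform / unstable manifold theory for endomorphisms from \cite{qian2009smooth}, to produce a definite lower bound on the length of the local unstable leaf. Concretely, fix $(x_n)_n \in \Lambda^u$. By definition of $\Lambda^u$ we have $\log \|Df^{-1}_{x_{-n}} \cdots Df^{-1}_{x_0}\| < -n\chi/2$ for every $n \geq 1$, so the derivative cocycle along the backward orbit contracts the center (and hence the unstable) direction at a uniform exponential rate $e^{-n\chi/2}$. Together with the uniform expansion on $E^u$ coming from partial hyperbolicity, this says that the past orbit of $(x_n)_n$ is a uniformly hyperbolic pseudo-orbit for the inverse dynamics in the relevant directions. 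First I would set up the local unstable manifold at $x_n$ as the image of a graph transform: working in the fiber $\widetilde{B}(\widetilde{x},r)$ where the linear isomorphisms $Df_{x_n}\colon E^u_{x_n}\to E^u_{x_{n+1}}$ are well-defined (as in the Remark following Proposition \ref{fakef}), one runs the standard stable-manifold-for-$\widetilde{f}^{-1}$ argument: the uniform bound from $\Lambda^u$ gives uniform constants in the contraction estimate, so the graph transform is a uniform contraction on a ball of fixed radius $r_0$ in the space of Lipschitz sections, independent of the particular point of $\Lambda^u$.

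The key point is that uniformity of the hyperbolicity constants along the backward orbit — which is exactly what the estimate defining $\Lambda^u$ provides — forces the fixed point of the graph transform (the local unstable manifold) to have size bounded below by a constant depending only on $\chi$, the $C^{1+\alpha}$ norm of $f$, and the partial hyperbolicity data, but not on $(x_n)_n \in \Lambda^u$. So one sets $\gamma$ to be, say, half that radius $r_0$, and obtains $\mathrm{length}(W^u_{\mathrm{loc}}(x_n)) \geq \gamma$ for every $(x_n)_n \in \Lambda^u$. I would phrase this as: the estimate $\log\|Df^{-1}_{x_{-n}}\cdots Df^{-1}_{x_0}\| < -n\chi/2$ is precisely a "$(\chi/2)$-hyperbolic time at every scale" condition for $\widetilde f^{-1}$ along the center-unstable direction, so the Hadamard–Perron theorem for non-uniformly hyperbolic endomorphisms (Qian–Xie–Zhu, as cited) applies with uniform bounds, yielding an unstable disc through $x_n$ of uniform internal radius.

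The main obstacle I anticipate is bookkeeping: the standard non-uniform unstable manifold theorem gives a disc whose size depends measurably on the point through a "tempered" slowly-varying function, so to get a genuinely uniform lower bound one must check that the subexponential correction terms are actually controlled by honest constants once the hyperbolicity holds at \emph{every} time step with the \emph{same} rate $\chi/2$ (rather than just asymptotically). This is where the full strength of the pointwise-for-all-$n$ inequality in the definition of $\Lambda^u$ — as opposed to a Birkhoff-average statement — is essential; I would make sure to invoke it at each iterate of the graph transform. A secondary technical point is that $W^u$ here genuinely lives in the fiber of the inverse limit attached to the chosen past $(x_n)_n$, so one should be careful that "length of $W^u(x_n)$" refers to the local unstable leaf inside $\widetilde B(\widetilde x, r)$ determined by that past; the construction via the linear isomorphisms $Df_{x_n}|_{E^u}$ makes this canonical, and partial hyperbolicity guarantees the angles stay bounded away from zero so that the leaf is a genuine $C^1$ graph of definite size. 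With $\gamma$ so produced, the proof is complete.
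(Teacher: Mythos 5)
Your proposal is correct in substance but follows a genuinely different route from the paper. You construct the unstable disc by running the graph transform (Hadamard--Perron for endomorphisms, as in Qian--Xie--Zhu) along the chosen past, and argue that because the defining inequality of $\Lambda^u$ holds at \emph{every} backward time with the \emph{same} rate $\chi/2$, the usually tempered, point-dependent constants of the non-uniform unstable manifold theorem become honest uniform constants, yielding a disc of radius bounded below independently of the point. The paper instead avoids invariant-manifold machinery entirely: since $\dim E^c=1$, the bundle $E^c$ is locally integrable, so there is a continuous family of central curves $\gamma^c_{x_n}$ of uniform length $\delta_1$; using uniform continuity of $Df$ on the projective bundle one gets a cone estimate with margin $\chi/4$ on a $\delta_0$-neighborhood, and then a direct induction shows that the fixed-size curve $W^c((x_n),\ell_0)$, $\ell_0=\min\{\delta_0,\delta_1\}$, satisfies $|\tilde f^{-n}(W^c((x_n),\ell_0))|<e^{-n\chi/4}\ell_0$, hence is contained in $W^u((x_n))$, giving $\gamma=\ell_0$. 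Your approach is more general (it would survive a higher-dimensional, non-integrable center), while the paper's is more elementary and self-contained given the one-dimensional center. One point you leave implicit that the paper makes explicit and that you should spell out: the $\Lambda^u$ estimate is a pointwise statement along the single orbit $(x_n)_n$, whereas the graph transform needs contraction estimates on a whole neighborhood (or cone field) around that orbit; this is exactly where the uniform continuity of $Df$ and the sacrifice of rate from $\chi/2$ to something like $\chi/4$ enter, and without that step the claim of ``uniform constants in the contraction estimate'' is not yet justified. With that margin inserted at each iterate, your argument goes through.
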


\begin{proof}
    
Since $Df$ is uniformly continuous in the projective tangent bundle, there exist a small center cone field $\mathcal{C}^c$ and $\delta_0>0$ such that:

\begin{equation*}
    d((x_n), (y_n)) < \delta_0 , v \in \mathcal{C}^c_{y_0} \Rightarrow | \log||Df^{-1}_{x_0}|_{E^c_{x_0}}||-\log||Df^{-1}_{y_0}v|| | < \frac{\chi}{4}.
\end{equation*}
This implies $||Df^{-1}_{y_0}v|| < e^{\frac{\chi}{4}}||Df^{-1}_{x_0}|_{E^c_{x_0}}||$, the bundle $E^c$ is locally integrable, i.e. there is $\delta_1>0$ and a continuous family of curves $\gamma^c_{x_n}$ centered at $(x_n)$ and tangent to $E^c$ such that $|\gamma^c_{x_n}|> \delta_1$ for every $(x_n) \in \widetilde{M}$.

Let $\ell_0 := \min\{ \delta_0 , \delta_1 \}$, define $W^c((x_n), \ell_0)$ as the subset of $\gamma^c_{x_n}$. We claim that if $(x_n) \in \Lambda^u$, then $W^c((x_n), \ell_0)$ is contained in the unstable manifold of $(x_n)$. For this, fix $(x_n) \in \Lambda^u$ and let $\gamma(t)$ be a parametrization of $W^c((x_n),\ell_0)$. We have


\begin{equation*}
    ||Df^{-1}_{\gamma(t)} \gamma'(t)|| < 
    e^{\frac{\chi}{4}} ||Df^{-1}_{x_0}|
    _{E^c_{x_0}}||||\gamma'(t)||,
\end{equation*}
and so $|\tilde{f}^{-1}(W^c((x_n),\ell_0))|< e^{-\frac{\chi}{4}} \ell_0 < \delta_0$. By induction, it follows that $|\tilde{f}^{-n} (W^c((x_n),\ell_0)) | < e^{-\frac{n\chi}{4}} \ell_0$ for all $n \geq 0$, this implies that $W^c((x_n),\ell_0)$ expands exponentially fast.
    
\end{proof}

Moreover, for any $(x_n)_n, (y_n)_n \in \Lambda^u$, the unstable and stable manifolds satisfy $W^u((x_n)) \pitchfork W^s((y_n)) \neq \emptyset$, by Lemma \ref{unif_u} and the dominated splitting of $T\tilde{M}$. By \cite[Theorem A]{lima2024measures}, it follows that $\tilde{\mu}_n \sim \tilde{\mu}_m$ for all $n, m \geq n_0$. Since each equivalence class contains a unique measure of maximal entropy, the desired result follows immediately.

\bibliographystyle{abbrv}
\bibliography{Bibl.bib}
	
\end{document}